\crefname{equation}{}{}
\crefname{lem}{Lemma}{Lemmas}
\crefname{thm}{Theorem}{Theorems}
\newcommand{\dd}{\,{\rm d}}
\newcommand{\bs}{\boldsymbol}
\newcommand{\dual}[1]{\left\langle {#1} \right\rangle}
\newcommand{\proxi}[0]{ {\bf prox}}
\newcommand{\dom}[0]{ {\bf dom\,}}
\newcommand{\argmin}[0]{ {\mathrm {argmin}\,}}
\newcommand{\nm}[1]{\left\lVert {#1} \right\rVert}
\newcommand{\snm}[1]{\left\lvert {#1} \right\rvert}
\newcommand{\ssnm}[1]
{
	\left\vert\kern-0.25ex
	\left\vert\kern-0.25ex
	\left\vert
	{#1}
	\right\vert\kern-0.25ex
	\right\vert\kern-0.25ex
	\right\vert
}
\def\spher@harm#1{%
	\vbox{\hbox{%
			\offinterlineskip
			\valign{&\hb@xt@2\p@{\hss$##$\hss}\vskip.2ex\cr#1\crcr}%
		}\vskip-.36ex}%
}
\def\gshone{\spher@harm{.}}
\def\gshtwo{\spher@harm{.&.}}
\def\gshthree{\spher@harm{.&.&.}}
\let\gsh\spher@harm
\newtheorem{coro}{Corollary}[section]
\newtheorem{lem}{Lemma}[section]
\newtheorem{thm}{Theorem}[section]
\newcounter{mnote}
\let\oldmarginpar\marginpar
\renewcommand\marginpar[1]{\-\oldmarginpar[\raggedleft\footnotesize #1]%
	{\raggedright\footnotesize #1}}
\def\@captype{table}\makeatother
\begin{document}

\title{A Unified Convergence Analysis of First Order Convex Optimization Methods via Strong Lyapunov Functions}

\author{\name Long Chen \email chenlong@math.uci.edu\\
       \addr Department of Mathematics \\
University of California at Irvine\\
Irvine, CA 92697, USA
       \AND
       \name Hao Luo \email luohao@math.pku.edu.cn \\
       \addr School of Mathematical Sciences\\
      Peking University\\
Beijing, 100871, China}

\editor{}

\maketitle

\begin{abstract}
We present a unified convergence analysis for first order convex optimization methods using the concept of strong Lyapunov conditions. Combining this with suitable time scaling factors, we are able to handle both convex and strong convex cases, and establish contraction properties of Lyapunov functions for many existing ordinary differential equation models. Then we derive prevailing first order optimization algorithms, such as proximal gradient methods, heavy ball methods (also known as momentum methods), Nesterov accelerated gradient methods, and accelerated proximal gradient methods from numerical discretizations of corresponding dynamical systems. We also apply strong Lyapunov conditions to the discrete level and provide a more systematical analysis framework. Another contribution is  a novel second order dynamical system called Hessian-driven Nesterov accelerated gradient flow which can be used to design and analyze accelerated first order methods for smooth and non-smooth convex optimizations. 
\end{abstract}

\begin{keywords}
Unconstrained convex optimization, first order method, dynamical system, Lyapunov function, exponential decay, gradient flow, heavy ball system, asymptotic vanishing damping system, proximal gradient method, momentum method, Nesterov acceleration
\end{keywords}


\section{Introduction}
We consider first order methods for solving the unconstrained convex minimization problem
\begin{equation}\label{eq:min}
	\min_{x\in V}\,f(x).
\end{equation}
Above and throughout $V$ is a Hilbert space and $V^*$ is its dual space. 
First order optimization methods for solving \eqref{eq:min} regains the popularity in the application of large scale machine learning~\citep{bottou_optimization_2018}.

Denoted by $x^*$ a global minimizer of $f$ which is unique when $f$ is strictly convex.  Instead of solving the Euler equation $\nabla f(x^*) = 0$, we consider  continuous optimization methods which start from some
ordinary differential equation (ODE)
%
\begin{equation}\label{eq:introode}
	\bm x'(t) = \mathcal G(\bm x(t)),\quad t>0.
\end{equation}
Here in general, $\bm x$ is a vector-valued function of time variable $t$ and $\mathcal G$ is a vector field, which can be the negative gradient $-\nabla f$ or any reasonable alternate. We assume $\bm x^*$ (containing $x^*$ as a component when $\bs x$ is a vector)
is an equilibrium point of the autonomous dynamical system \cref{eq:introode}, i.e. $\mathcal G(\bm x^*) = 0$, and ideally this shall imply $\nabla f(x^*)=0$.


A simple example is $\mathcal G = - \nabla f$, with which the ODE \cref{eq:introode} becomes the well known gradient flow $x' = -\nabla f(x)$. For this standard model, the explicit (forward) Euler scheme leads to the gradient descent method for solving \cref{eq:min}, and the implicit (backward) Euler scheme corresponds to the proximal point algorithm~\citep{guler_convergence_1991,rockafellar_monotone_1976}. When extended to the composite case $f=h+g$ with smooth $h$ and nonsmooth $g$, the semi-implicit discretization, also known as the forward-backward scheme, recovers the proximal gradient method~\citep{parikh_proximal_2014}.
%

Besides the gradient flow, many more (second order) dynamic systems, such as the heavy ball model~\citep{polyak_methods_1964}, the asymptotic vanishing damping (AVD) system~\citep{Su;Boyd;Candes:2016differential}, 
the dynamic inertial of Newton system~\citep{alvarez_second-order_2002}, and the ODE based  variational method~\citep{wibisono_variational_2016,Wilson:2021} etc, have been developed to explain the acceleration mechanism and design new first order optimization methods as well. In~\citet{luo_chen_differential_2019}, we have proposed the so-called Nesterov accelerated gradient flow and provided an explanation on the acceleration phenomena by using the so-called $A$-stability of ODE solvers. All the models mentioned here admit the unified first order form \cref{eq:introode} with different vector fields.

The long time decay property of the continuous problem \cref{eq:introode} is very important and gives insights on the rate of convergence of the corresponding optimization methods~\citep{Su;Boyd;Candes:2016differential}.
Appropriate discretizations of the above ODE systems will lead to accelerated first order methods such as the heavy ball method~\citep{polyak_methods_1964}, Nesterov's accelerated gradient method~\citep{Nesterov1983}, and the accelerated proximal gradient method~\citep{beck_fast_2009,tseng_on_accelerated_Seattle_2008} etc. The analysis of discrete algorithms, however, is not a straightforward translation from the continuous level. A standard work flow is to design a Lyapunov function and establish the decay of that Lyapunov function; see~\citet{shi_understanding_2018,Wilson:2021,Siegel:2019} among many others. This procedure often involves tricky algebraic manipulation and tedious calculations. Indeed in~\citet[Section 6]{Su;Boyd;Candes:2016differential}, the authors conclude that {\em ``a general theory mapping properties of ODEs into corresponding properties for discrete updates would be a welcome advance."}


In this paper we will propose such a theory using a new concept: {\em strong Lyapunov condition}. Recall that, in order to study the stability of an equilibrium of a dynamical system, e.g. \cref{eq:introode}, Lyapunov introduced the so-called Lyapunov function $\mathcal L(\bm x)$ \citep[see][]{Khalil2001}, which is nonnegative and satisfies $\mathcal L(\bm x^*) = 0$ and the Lyapunov condition: 
\begin{equation}\label{eq:introLy-cond}
	-\nabla \mathcal L(\bm x) \cdot\mathcal G(\bm x)
	\text{ is locally positive near the equilibrium point }\bm x^*.
\end{equation}
%
Then the (local) decay property of $\mathcal L(\bm x(t))$ along the trajectory $\bm x(t)$ of the autonomous system \cref{eq:introode} can be derived immediately
$$
\frac{\dd}{\dd t}  \mathcal L(\bm x(t))
= \nabla \mathcal L(\bm x) \cdot \bm x'(t) = \nabla \mathcal L(\bm x)\cdot \mathcal G(\bm x) < 0. 
$$
Therefore $\mathcal L(\bm x(t)) \to 0$ as $t \to \infty$ from which we may conclude $x(t) \to x^*$ or $f(x(t))\to f(x^*)$ as $t\to \infty$. However, this can only imply the convergence not the rate of convergence, i.e., how fast $\mathcal L(\bm x(t))$ approaches to zero. 
%

To establish the convergence rate of $\mathcal L(\bm x(t))$, we introduce the following {\it strong Lyapunov condition}: $\mathcal L(\bm x)$ is a Lyapunov function and there exist constant $q\geqslant 1$, strictly positive function $c(\bm x)$ and function $p(\bm x)$ such that
\begin{equation}\label{eq:introLyp-cond}
	-\nabla \mathcal L(\bm x)\cdot \mathcal G(\bm x)\geqslant
	c(\bm x)\mathcal L^q(\bm x)+p^2(\bm x)
\end{equation}
holds true near $\bm x^*$.
From this, we can easily derive the exponential decay $\mathcal L(\bm x(t)) =O(e^{-ct})$ for $q=1$ and the algebraic decay $\mathcal L(\bm x(t)) =O(t^{1/(1-q)})$ for $q>1$. We emphasize that the condition \cref{eq:introLyp-cond} is not only restricted to the strongly convex case. It can be established for convex case; see \cref{eq:diff-L-gf} for the gradient flow and \cref{eq:LG-AVD} for the AVD system.

We apply our framework to design and analyze first-order optimization methods, especially the accelerated gradient methods, for smooth and non-smooth convex optimization problems. We believe our unified convergence analysis is more transparent and systematic. 
Specifically, once the dynamical system \cref{eq:introode} is discretized in time by \begin{equation}\label{eq:bm-xk1}
	\bm x_{k+1}-\bm x_k = \alpha_k\widetilde{\mathcal G}(\bm x_k,\bm x_{k+1}),
\end{equation}
where $\widetilde{\mathcal G}(\bm x_k,\bm x_{k+1})$ is an approximation of ${\mathcal G}(\bm x_{k+1})$, a sequence of points $\{\bm x_k\}$ is produced. Given some strong Lyapunov function $\mathcal L(\bm x)$ that possess fast decay in the continuous level, a discrete Lyapunov function $\mathcal L_k = \mathcal L(\bm x_k)$ appear naturally. 
%
Due to the discretization error, the discrete dynamic system  \cref{eq:bm-xk1} may not be faithful to the continuous one \cref{eq:introode}. Whence, it is nontrivial to say that the scheme \cref{eq:bm-xk1} preserves the decay property in the discrete level. Fortunately, the strong Lyapunov condition \cref{eq:introLyp-cond} works for $\mathcal L_k$ and we will use it to guide the designing and analysis of optimization algorithms. A paradigm of our analysis is summarized in the following three steps.
\begin{itemize}
	\item First expand the difference
	$$
	\mathcal L(\bm x_{k+1}) - \mathcal L(\bm x_k) = (\nabla \mathcal L(\bm x_{k+1}),\bm x_{k+1} - \bm x_k) - \mathcal R_1,
	$$
	where $\mathcal R_1\geqslant 0$ is the Bregman divergence of $\mathcal L$. The negative remainder $- \mathcal R_1$ is introduced due to the convexity of $\mathcal L$ which can be built-in when designing $\mathcal L$. 
	

	\item Then compare the right hand side of the discretization \cref{eq:bm-xk1} 
	with $\alpha_k \mathcal G(\bm x_{k+1})$ and obtain
	\begin{equation}\label{eq:step2}
		\mathcal L(\bm x_{k+1}) - \mathcal L(\bm x_k) \leqslant \alpha_k(\nabla \mathcal L(\bm x_{k+1}), \mathcal G(\bm x_{k+1})) - \mathcal R_1 + \mathcal R_2,
	\end{equation}
	where the positive term $\mathcal R_2$ comes from the lagging of discretization, i.e., $\widetilde{\mathcal G}(\bm x_k,\bm x_{k+1})-\mathcal G(\bm x_{k+1})$, which is generally nonzero for using partial information from $\bm x_{k}$.
	\item Finally applying strong Lyapunov property \cref{eq:introLyp-cond} at $\bm x_{k+1}$ to \cref{eq:step2} will bring more negative term $-p^2(\bm x_{k+1})$, which together with $-\mathcal R_1$ cancels the lagging effect $\mathcal R_2$ and thus implies
	$$
	\mathcal L_{k+1}-\mathcal L_k\leqslant -\alpha_k \mathcal L_{k+1}^q,
	$$
	from which linear or sub-linear decay rate of the sequence $\{\mathcal L_k\}$ can be derived. 
\end{itemize}

Here, we mention a most related work~\citet{Wilson:2021}. They derived dynamical models from the Bregman--Lagrangian and showed an equivalence between the technique of estimate sequences devised by~\citet{Nesterov:2013Introductory} and a family of Lyapunov functions in both continuous and discrete time. Note that their attentions were only paid to smooth objectives and they treated convex case and strongly convex case separately. In this work, however, we handle both convex and strongly convex cases simultaneously by introducing a time scaling factor and unify the verification of the contraction of Lyapunov function via the tool of strong Lyapunov condition which is also generalizable to non-smooth cases. 

The rest of this paper is outlined as follows. \cref{sec:bd-convex} is responsible for a brief review of  preliminary inequalities involving convex functions, and \cref{sec:strong-Lyapunov-functoin} introduces the strong Lyapunov condition and also provides some key estimates. As a revisit of the gradient descent method and the proximal point algorithm, \cref{sec:GD-Euler,sec:Scale-GD-Prox} shall apply our Lyapunov framework to the gradient flow and the scaled gradient flow, respectively. After that, \cref{sec:HB-Momentum,sec:NAG} focus on some typical second-order dynamical systems and give the corresponding convergence rate analysis via strong Lyapunov functions. Finally, \cref{sec:conclude} ends this paper with some concluding remarks.

\section{Bounds on Convex Functions}
\label{sec:bd-convex}
This section gives a quick review of several bounds on convex functions. Throughout, we consider both smooth convex functions over the entire space $V$ and extended-value function $f:V\to{\mathbb R}\cup\{+\infty\}$.  For the latter, the effective domain of $f$ is denoted by $\dom f:=\{x\in V:f(x)<\infty\}$.
%
\subsection{Convex functions}
A continuous function $f$ is called {\em convex} if
\begin{equation}\label{eq:conv}
	f(\alpha x + (1-\alpha) y) \leqslant \alpha f(x) + (1-\alpha) f(y)\quad\forall\,\,x, y\in \dom f,
\end{equation}
for all $\alpha\in [0,1]$, and it is called {\it strictly convex} if the above inequality holds strictly
$$f(\alpha x + (1-\alpha) y) < \alpha f(x) + (1-\alpha) f(y)\quad  \forall\,x, y\in \dom f \text{ and } x\neq y,$$
for all $\alpha\in (0,1)$.
A convex function is called {\it $\mu$-strongly convex} with parameter $\mu > 0$ if 
\begin{equation}\label{eq:def:strong-conv}
	f(\alpha x + (1-\alpha) y) \leqslant \alpha f(x) + (1-\alpha) f(y) -\frac{\mu}{2}\alpha(1-\alpha)\| x- y\|^2\quad\forall\,\,x, y\in \dom f,
\end{equation}
for all $ \alpha\in [0,1]$. 

The function $f$ is {\em coercive} if $f(x)\to\infty$ when $\nm{x}\to\infty$. When $f$ is $\mu$-strongly convex with $\mu>0$, then it is not hard to see $f$ is coercive. But convexity itself cannot imply the coercivity, e.g. $f(x) = e^{-x}$. The following results are classical, and proofs, which are skipped for the sake of brevity, can be found in~\citet[Proposition 1.2]{ekeland_convex_1987} or~\citet[Theorem.~8.2.2]{Ciarlet89}.

\begin{thm}
	\label{thm:wellposedS3}
	If $f$ is convex and coercive, then the problem \cref{eq:min} admits at least one solution $x^*\in V$, which is unique if we assume further $f$ is strictly convex.
\end{thm}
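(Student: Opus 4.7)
The plan is to handle existence by the direct method of the calculus of variations and then read off uniqueness from strict convexity in a single line. Both steps rely on the Hilbert space structure of $V$ only through reflexivity.

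For existence, let $m := \inf_{x \in V} f(x)$, which satisfies $m < +\infty$ since $\dom f$ is nonempty. I would take a minimizing sequence $\{x_n\} \subset V$ with $f(x_n) \to m$. Coercivity immediately forces $\{x_n\}$ to be bounded: otherwise a subsequence would have $\nm{x_{n_k}} \to \infty$, giving $f(x_{n_k}) \to \infty$ and contradicting $f(x_n) \to m$. Since $V$ is a Hilbert space, hence reflexive, the bounded sequence $\{x_n\}$ admits a weakly convergent subsequence $x_{n_k} \rightharpoonup x^* \in V$. The decisive step is to pass $f$ through the weak limit: because $f$ is convex and continuous, its sublevel sets $\{f \leqslant \lambda\}$ are convex and strongly closed, hence weakly closed by Mazur's lemma, so $f$ is weakly lower semicontinuous. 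This yields $f(x^*) \leqslant \liminf_k f(x_{n_k}) = m$, which combined with $f(x^*) \geqslant m$ gives $f(x^*) = m$, so $x^*$ is a minimizer.

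For uniqueness under strict convexity, suppose $x_1 \neq x_2$ were both minimizers with $f(x_1) = f(x_2) = m$. Applying the strict convexity inequality with $\alpha = 1/2$ gives
$$
f\left(\tfrac{1}{2}x_1 + \tfrac{1}{2}x_2\right) < \tfrac{1}{2} f(x_1) + \tfrac{1}{2} f(x_2) = m,
$$
contradicting the definition of $m$. The only genuine obstacle in the whole argument is the upgrade from strong to weak lower semicontinuity; once that is supplied by Mazur's lemma (equivalently, by noting that the epigraph of a continuous convex function is closed and convex, hence weakly closed), everything else is automatic. Note also that reflexivity is essential here: the same outline fails for a general Banach space without invoking some extra hypothesis such as weak-$*$ compactness of bounded sets.
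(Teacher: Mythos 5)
The paper does not give its own proof of this statement; it explicitly skips the argument and refers the reader to \citet[Proposition 1.2]{ekeland_convex_1987} and \citet[Theorem 8.2.2]{Ciarlet89}. Your proof is the standard direct method argument that those references employ: a minimizing sequence is bounded by coercivity, a weakly convergent subsequence exists by reflexivity of the Hilbert space, and weak lower semicontinuity of $f$ (obtained from convexity together with strong closedness of sublevel sets via Mazur's lemma) passes the infimum to the weak limit; uniqueness under strict convexity is the usual midpoint contradiction. The argument is correct and complete, and it also implicitly rules out $m=-\infty$, since $f(x^*)\leqslant m$ together with $f$ taking values in $\mathbb R\cup\{+\infty\}$ forces $m$ to be finite. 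No gap.
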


\subsection{Convex function classes}
Let $\mathcal C^1$ consist of all continuous differentiable functions on $V$. Denote by $\mathcal C_L^{1,1}$ the set of all $\mathcal C^1$ functions, the gradient of which is Lipschitz continuous with constant $0<L<\infty$:
\begin{equation}\label{eq:Lip}
	\|\nabla f(x) - \nabla f(y)\|_{*} \leqslant L\| x - y \|\quad \forall\,x, y\in V,
\end{equation}
where, for $g\in V^*$, the dual norm is
\[
\| g \|_{*} := \sup_{\substack{v \in V \\ \|v\|=1}} \langle g, v \rangle  = \sup_{v\in V\setminus\{0\}} \frac{\langle g,v\rangle}{\nm{v}}. 
\] 

We now introduce several function classes of convex functions. 
For $\mu > 0$, we use
$\mathcal S^0_\mu$ to denote the set of all $\mu$-strongly convex functions, and $\mathcal S^0_0$ for convex functions, where the superscript $0$ indicates the function is only continuous and may not be differentiable. Also, any $f\in\mathcal S_\mu^0$ is assumed to be closed and proper ($\dom f\neq \emptyset$). Moreover, for all $\mu\geqslant 0$ we set $\mathcal S_\mu^1 := \mathcal S_\mu^0\cap\mathcal C^1$. 
%
For constants $0\leqslant \mu \leqslant L < \infty$, we introduce the function class
$$
\mathcal S^{1}_{\mu, L} := \{ f\in \mathcal S_0^1 : \mu  \| x - y \|^2 \leqslant \langle \nabla f(x) - \nabla f(y), x - y\rangle \leqslant L\| x - y \|^2\, \forall\,x, y\in V\}.
$$
Set $\mathcal S_{\mu,L}^{1,1} = \mathcal S_\mu^1\cap\mathcal C_L^{1,1}$. It can be shown that $\mathcal S_{\mu,L}^{1,1} = \mathcal S^{1}_{\mu, L}$; see~\citet{lessard_analysis_2016}. 



\subsection{Bregman divergence and various bounds}
For $f\in\mathcal C^1$, define 
\begin{equation}\label{eq:Df}
	D_f(y,x) := f(y) - f_l(y; x) = f(y) - f(x) - \langle \nabla f(x), y - x \rangle,
\end{equation}
where $f_l(y; x):= f(x) + \langle \nabla f(x), y - x \rangle$  is its linear Taylor expansion at $x$.
If $f$ is convex, then for fixed $x\in V$, $D_f(\cdot,x)$ is also convex and thus $D_f(y,x)\geqslant 0$. When $f$ is strictly convex, $D_f(y,x)=0$ iff $x=y$, and $D_f(y,x)$ is called the {\it Bregman divergence} associated with $f$, which is in general not symmetric.

We then introduce its symmetrization, the symmetrized Bregman divergence,  
\begin{equation}\label{eq:M}
	2 M_{\nabla f}(x,y) :=  
	D_f(y,x)+D_f(x,y)=
	\langle \nabla f(x) - \nabla f(y), x - y\rangle.
\end{equation}
By the fundamental theorem of calculus 
\begin{equation}\label{eq:Df-int}
	\begin{aligned}
		D_f(y,x) &=  \dual{ \int_0^1  \nabla f(x+ \xi(y-x)) - \nabla f(x) \dd  \xi, \,y - x }  \\
		&=\int_0^1 2M_{\nabla f}(x_\xi, x)\frac{\dd \xi}{\xi},
		\quad x_{\xi}: = x+ \xi(y-x).
	\end{aligned}
\end{equation}
Based on \eqref{eq:Df-int}, we can shift the bound for $D_f(y,x)$ to $M_{\nabla f}(x,y)$ and vice verse. The following bounds can be found in~\citet[Chapter 2]{Nesterov:2013Introductory}. 
%
%

\begin{lem}\label{lem:Bregmandiv}
	For $f\in\mathcal C_{L}^{1,1}$, we have the upper bound
	\begin{align}
		\label{eq:DL} \max\{D_f(y,x), M_{\nabla f}(x,y)\}&\leqslant  \frac{L}{2}\| x - y \|^2.
	\end{align}
	For $f\in\mathcal S_{\mu}^1$ with $\mu\geqslant 0$, we have the lower bound
	\begin{align}
		\label{eq:Dmu}  \min\{D_f(y,x), M_{\nabla f}(x,y)\} &\geqslant \frac{\mu}{2}\| x - y \|^2.
	\end{align}
	For $f\in\mathcal S^{1}_{0,L}$, we have the lower bound
	\begin{align}
		\label{eq:philowerL} \min \{D_f(y,x), M_{\nabla f}(x,y)\} {}&\geqslant\frac{1}{2L}\| \nabla f(y) - \nabla f(x)\|_*^2.
	\end{align}
	For $f\in \mathcal S^{1}_{\mu}$ with $\mu>0$, we have the upper bound
	\begin{align}
		\label{phiuppermu} 
		\max\{D_f(y,x), M_{\nabla f}(x,y)\} &\leqslant \frac{1}{2\mu}\| \nabla f(y) - \nabla f(x)\|_*^2.
	\end{align}
	All the above inequalities hold for all $x,y\in \dom f$.
\end{lem}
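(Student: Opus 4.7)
The plan is to treat the four bounds as two pairs. Inequalities \eqref{eq:DL} and \eqref{eq:Dmu} come directly from the defining properties (Lipschitz gradient and strong convexity, respectively) combined with the integral identity \eqref{eq:Df-int}, while \eqref{eq:philowerL} and \eqref{phiuppermu} reduce to classical ``co-coercivity'' arguments, each handled with the shift trick
$$
\phi_x(z) := f(z) - \dual{\nabla f(x),\, z},
$$
which is convex, attains its global minimum at $z = x$ (since $\nabla \phi_x(x) = 0$), and inherits both the gradient Lipschitz constant $L$ and the strong-convexity modulus $\mu$ from $f$.

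First I would prove the $M_{\nabla f}$ halves and deduce the $D_f$ halves from \eqref{eq:Df-int}. For \eqref{eq:DL}, Cauchy--Schwarz against \eqref{eq:Lip} gives $\dual{\nabla f(x)-\nabla f(y),\, x-y} \leqslant L\nm{x-y}^2$, hence $M_{\nabla f}(x,y)\leqslant \frac{L}{2}\nm{x-y}^2$; plugging $M_{\nabla f}(x_\xi,x)\leqslant \frac{L}{2}\xi^2\nm{y-x}^2$ into \eqref{eq:Df-int} and integrating yields the $D_f$ bound. For \eqref{eq:Dmu}, sending $\alpha\to 0^+$ in the strong-convexity definition \eqref{eq:def:strong-conv} produces the subgradient inequality $D_f(y,x)\geqslant \frac{\mu}{2}\nm{x-y}^2$, and the symmetrization $2M_{\nabla f}(x,y) = D_f(y,x)+D_f(x,y)$ completes the argument.

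For the harder pair, I would apply the bounds just proved to $\phi_x$. In the $\mathcal S_{0,L}^1$ case, the descent lemma \eqref{eq:DL} applied to $\phi_x$ at $y$ with step $-\nabla\phi_x(y)/L$ produces a point whose $\phi_x$-value is at most $\phi_x(y) - \frac{1}{2L}\nm{\nabla\phi_x(y)}_*^2$; combining with the global minimality $\phi_x(x)\leqslant \phi_x(\cdot)$ and rearranging gives the $D_f$ half of \eqref{eq:philowerL}, after which symmetrization delivers the $M_{\nabla f}$ half. In the $\mathcal S_\mu^1$ case, $\phi_x$ is $\mu$-strongly convex with minimum at $x$; the strong-convexity lower bound $\phi_x(z)\geqslant \phi_x(y) + \dual{\nabla\phi_x(y),\, z-y} + \frac{\mu}{2}\nm{z-y}^2$ is a quadratic in $z$ whose global minimum over $V$ equals $\phi_x(y) - \frac{1}{2\mu}\nm{\nabla\phi_x(y)}_*^2$, and evaluating at $z = x$ yields the $D_f$ half of \eqref{phiuppermu}. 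For the $M_{\nabla f}$ half, Cauchy--Schwarz on $2M_{\nabla f}(x,y)$ combined with the coercive inequality $\nm{\nabla f(x)-\nabla f(y)}_* \geqslant \mu\nm{x-y}$ (itself a direct corollary of \eqref{eq:Dmu}) finishes the proof.

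The only spot I expect any friction is the quadratic minimization used for \eqref{phiuppermu}: over a general Banach space one would invoke Fenchel duality to identify $\inf_{u\in V}\{\dual{g,u} + \frac{\mu}{2}\nm{u}^2\} = -\frac{1}{2\mu}\nm{g}_*^2$, but because $V$ is Hilbert the Riesz identification produces the minimizer $u = -\mu^{-1}J^{-1}g$ and the closed-form value without further hypothesis. Everything else is routine manipulation once the shifted function $\phi_x$ and the integral representation \eqref{eq:Df-int} are in place.
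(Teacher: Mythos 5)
The paper does not actually prove \cref{lem:Bregmandiv}; it states the four bounds and refers the reader to~\citet[Chapter 2]{Nesterov:2013Introductory}. Your proposal reconstructs the standard Nesterov argument, and it is correct. The easy pair \cref{eq:DL,eq:Dmu} follows exactly as you say: Cauchy--Schwarz against \cref{eq:Lip} gives the $M_{\nabla f}$ half of \cref{eq:DL} without any convexity assumption, and the $D_f$ half follows by inserting $M_{\nabla f}(x_\xi,x)\leqslant \frac{L}{2}\xi^2\nm{y-x}^2$ into the integral identity \cref{eq:Df-int}; the lower bounds in \cref{eq:Dmu} come from letting $\alpha\to 0^+$ in \cref{eq:def:strong-conv} and then symmetrizing via \cref{eq:M}. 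The harder pair is handled with the shift $\phi_x(z)=f(z)-\dual{\nabla f(x),z}$, which preserves both $L$ and $\mu$ and has $x$ as a global minimizer, and your use of the descent step $y - \nabla\phi_x(y)/L$ for \cref{eq:philowerL} and the quadratic minorant (minimized over $z$, then compared with $\phi_x(x)=\min\phi_x$) for \cref{phiuppermu} are exactly the textbook moves. Two minor polish points: your phrasing ``evaluating at $z=x$'' for \cref{phiuppermu} should really be ``take the infimum over $z$ on both sides,'' which is what the subsequent sentence actually computes; and for the $M_{\nabla f}$ half of \cref{phiuppermu} it is slightly cleaner to just symmetrize the $D_f$ bound via \cref{eq:M} rather than re-deriving it through Cauchy--Schwarz and the coercivity of $\nabla f$, though your route is equally valid. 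The remark about the Hilbert-space simplification in identifying the quadratic minimum is well placed and consistent with the paper's standing assumption that $V$ is Hilbert.
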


\subsection{Bounds involving a global minimum}
We list specific examples of inequalities when one variable is $x^*$ for which $\nabla f(x^*) = 0$. Then $D_f(x, x^*) = f(x) - f(x^*)$ is the so-called optimality gap and $2M_{\nabla f}(x,x^*) = \dual{\nabla f(x), x - x^*}$.

\begin{coro} 
	For $f\in \mathcal S^1_{0,L}$, we have
	\begin{align}
		\label{eq:Df2df} \frac{1}{2L}\| \nabla f(x)\|_*^2 \leqslant f(x) - f(x^*) & \leqslant \frac{L}{2}\| x - x^*\|^2,\\ 
		\label{eq:Mf2df} \frac{1}{L}\| \nabla f(x)\|_*^2 \leqslant \dual{\nabla f(x), x - x^*} & \leqslant L \| x -x^*\|^2 .
	\end{align}
	For $f\in \mathcal S^1_{\mu}$ with $\mu>0$, we have
	\begin{align}
		\label{eq:optgapmu}
		\frac{\mu}{2}\| x -x^*\|^2 & \leqslant f(x) - f(x^*)  \leqslant \frac{1}{2\mu}\|\nabla f(x)\|_*^2, \\ 
		\label{eq:Mfmu} \mu\| x - x^*\|^2 & \leqslant  \dual{\nabla f(x), x - x^*}  \leqslant \frac{1}{\mu} \| \nabla f(x)\|_*^2, \\
		\label{eq:Mxstar}\dual{\nabla f(x), x - x^*} &\geqslant f(x) -f(x^*) + \frac{\mu}{2}\| x - x^* \|^2.
	\end{align}
	For $f\in \mathcal S^{1}_{\mu, L}$ with $\mu\geqslant 0$, we have
	\begin{equation}\label{eq:refineMxstar}
		\dual{\nabla f(x), x - x^*} \geqslant \frac{\mu L}{\mu + L} \| x-x^*\|^2 + \frac{1}{\mu + L}\| \nabla f(x) \|_*^2.
	\end{equation}
	All the above inequalities hold for all $x,y\in \dom f$.
\end{coro}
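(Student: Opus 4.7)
The plan is to derive every inequality in the corollary by specializing \cref{lem:Bregmandiv} to the pair $(x,x^*)$ and invoking the stationarity $\nabla f(x^*)=0$. From the definitions \cref{eq:Df} and \cref{eq:M}, substituting $y\leftarrow x$, $x\leftarrow x^*$, and using $\nabla f(x^*)=0$, I obtain the two basic identities
\begin{equation*}
D_f(x,x^*) = f(x)-f(x^*),\qquad 2M_{\nabla f}(x,x^*) = \dual{\nabla f(x),\, x - x^*},
\end{equation*}
which convert every bound in \cref{lem:Bregmandiv} into a statement about the optimality gap and the gradient-inner-product appearing in the corollary.

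Concretely, the upper bounds in \cref{eq:Df2df} and \cref{eq:Mf2df} come from \cref{eq:DL} applied to $f\in\mathcal S^1_{0,L}$, while the corresponding lower bounds follow from \cref{eq:philowerL} after noting $\nabla f(x)-\nabla f(x^*) = \nabla f(x)$. Similarly, \cref{eq:optgapmu} and \cref{eq:Mfmu} are immediate consequences of \cref{eq:Dmu} (lower) and \cref{phiuppermu} (upper) applied to $f\in\mathcal S^1_\mu$. For \cref{eq:Mxstar} I would use the reverse pairing: apply \cref{eq:Dmu} with the arguments swapped, i.e.\ $D_f(x^*,x)\geqslant \frac{\mu}{2}\nm{x-x^*}^2$, then expand $D_f(x^*,x)=f(x^*)-f(x)-\dual{\nabla f(x),x^*-x}$ and rearrange.

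The only step requiring genuine work is the refined estimate \cref{eq:refineMxstar}. My approach here is the standard Nesterov reduction: introduce the auxiliary function $\phi(x):=f(x)-\frac{\mu}{2}\nm{x}^2$, which belongs to $\mathcal S^1_{0,L-\mu}$ because subtracting a $\mu$-strongly convex quadratic shifts both the strong-convexity modulus and the upper Lipschitz bound downward by $\mu$ in the symmetric inequality defining $\mathcal S^1_{\mu,L}$. Applying \cref{eq:philowerL} to $\phi$ at the pair $(x,x^*)$ and using $\nabla\phi(x)-\nabla\phi(x^*)=\nabla f(x)-\mu(x-x^*)$ yields a co-coercivity inequality of the form
\begin{equation*}
\dual{\nabla f(x)-\mu(x-x^*),\,x-x^*}\geqslant \frac{1}{L-\mu}\nm{\nabla f(x)-\mu(x-x^*)}_*^2.
\end{equation*}
Expanding the square on the right-hand side and collecting like terms should produce $(L+\mu)\dual{\nabla f(x),x-x^*}\geqslant \nm{\nabla f(x)}_*^2 + \mu L\nm{x-x^*}^2$, which is exactly \cref{eq:refineMxstar}.

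The main obstacle is mostly bookkeeping: correctly accounting for the cross term $-2\mu\dual{\nabla f(x),x-x^*}$ produced by the square, and confirming that the coefficients combine into $\mu L/(L+\mu)$ and $1/(L+\mu)$. Two mild degeneracies deserve brief comment. When $\mu=0$, the refined bound collapses to the lower half of \cref{eq:Mf2df}, so nothing new is needed. When $\mu=L$, the Lipschitz constant of $\nabla\phi$ vanishes, forcing $\phi$ to be affine and $f$ to be a shifted quadratic; in that case the inequality holds with equality and can either be verified directly or obtained by continuity from the strict case $\mu<L$.
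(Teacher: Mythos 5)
Your proposal is correct and follows the same route as the paper for \cref{eq:Df2df}--\cref{eq:Mxstar}: specialize \cref{lem:Bregmandiv} at the pair $(x,x^*)$ with $\nabla f(x^*)=0$, and for \cref{eq:Mxstar} rearrange $D_f(x^*,x)\geqslant\frac{\mu}{2}\nm{x-x^*}^2$ (equivalently, the definition of $\mu$-convexity). For the refined bound \cref{eq:refineMxstar} the paper simply cites \citet[Theorem 2.1.12]{Nesterov:2013Introductory} without proof, whereas you reproduce Nesterov's argument: shift by the strongly convex quadratic to get $\phi=f-\frac{\mu}{2}\nm{\cdot}^2\in\mathcal S^1_{0,L-\mu}$, apply co-coercivity \cref{eq:philowerL} to $\phi$ at $(x,x^*)$, and expand $\nm{\nabla f(x)-\mu(x-x^*)}_*^2$; the algebra does collapse to $(L+\mu)\dual{\nabla f(x),x-x^*}\geqslant\nm{\nabla f(x)}_*^2+\mu L\nm{x-x^*}^2$, and your treatment of the degenerate cases $\mu=0$ and $\mu=L$ is sound. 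So the content is identical; you have simply supplied the omitted computation.
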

Inequalities \eqref{eq:Df2df}-\eqref{eq:Mfmu} are direct consequence of Lemma \ref{lem:Bregmandiv} and  \eqref{eq:Mxstar} is the definition of $\mu$-convex. The refined lower bound \eqref{eq:refineMxstar}  of $M_{\nabla f}$ can be found 
in~\citet[Theorem 2.1.12]{Nesterov:2013Introductory}. 

To the end, we extend \cref{eq:optgapmu,eq:Mxstar} to the nonsmooth case. Recall that the sub-gradient $\partial f$ of a proper and convex function $f$ is a set-valued function and can be defined as follows
\begin{equation}\label{eq:sub}
\partial f(x):=\left\{
p\in V^*:\,f(y)-f(x)\geqslant \dual{p,y-x}\quad\forall\,y\in V
\right\}.
\end{equation} 
Any $p\in\partial f(x)$ will be also called a {\it sub-gradient} of $f$ at $x$. 
\begin{coro} \label{coro:bd-non}
	For $f\in \mathcal S^0_{\mu}$ with $\mu>0$, we have
	\begin{align*}
		\frac{\mu}{2}\| x -x^*\|^2 & \leqslant f(x) - f(x^*)  \leqslant \frac{1}{2\mu}\|p\|_*^2, \\ 
		 \mu\| x - x^*\|^2 & \leqslant  \dual{p, x - x^*}  \leqslant \frac{1}{\mu} \| p\|_*^2, \\
		\dual{p, x - x^*} &\geqslant f(x) -f(x^*) + \frac{\mu}{2}\| x - x^* \|^2,
	\end{align*}
	where $p\in\partial f(x)$ and $x\in \dom f$.
\end{coro}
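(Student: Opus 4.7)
The plan is to first establish the nonsmooth analogue of the strongly convex subgradient inequality, namely
\[
f(y) \geqslant f(x) + \dual{p,\, y-x} + \frac{\mu}{2}\|y-x\|^2 \quad \text{for all } p\in\partial f(x) \text{ and } y\in V,
\]
and then deduce each of the three displayed bounds by specializing $y=x^*$ together with the fact that $0\in\partial f(x^*)$.

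To prove this enhanced subgradient inequality I would combine the definition \cref{eq:def:strong-conv} of $\mu$-strong convexity with the sub-gradient inequality \cref{eq:sub}. Setting $z_\alpha := \alpha x + (1-\alpha) y$ and applying \cref{eq:sub} at $x$ gives $f(z_\alpha) \geqslant f(x) + (1-\alpha)\dual{p,\, y-x}$, which, combined with \cref{eq:def:strong-conv}, yields
\[
(1-\alpha)\dual{p,\, y-x} \leqslant (1-\alpha)\bigl(f(y)-f(x)\bigr) - \frac{\mu}{2}\alpha(1-\alpha)\|x-y\|^2.
\]
Dividing by $1-\alpha$ and sending $\alpha \to 1^{-}$ produces the stated inequality. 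Specializing $y=x^*$ immediately gives the third displayed bound $\dual{p,\, x-x^*} \geqslant f(x)-f(x^*) + \frac{\mu}{2}\|x-x^*\|^2$. Since $x^*$ is a minimizer, $0\in\partial f(x^*)$, so applying the same inequality at $x^*$ with $p=0$ and $y=x$ gives $f(x)-f(x^*)\geqslant \frac{\mu}{2}\|x-x^*\|^2$, the left half of the first bound; the left half of the middle bound $\mu\|x-x^*\|^2 \leqslant \dual{p,\, x-x^*}$ then follows by adding the two preceding estimates.

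The two remaining upper bounds involving $\|p\|_*^2$ are handled by Young's inequality. For the first, $\dual{p,\, x-x^*} \leqslant \|p\|_*\|x-x^*\| \leqslant \frac{1}{2\mu}\|p\|_*^2 + \frac{\mu}{2}\|x-x^*\|^2$ combined with the third bound cancels the $\frac{\mu}{2}\|x-x^*\|^2$ term and leaves $f(x)-f(x^*)\leqslant \frac{1}{2\mu}\|p\|_*^2$. For the upper bound on $\dual{p,\, x-x^*}$, I would use the already-established lower estimate $\mu\|x-x^*\|^2 \leqslant \dual{p,\, x-x^*}$ together with Cauchy--Schwarz $\dual{p,\, x-x^*}\leqslant \|p\|_*\|x-x^*\|$ to deduce $\mu\|x-x^*\|\leqslant \|p\|_*$, and then re-apply Cauchy--Schwarz to obtain $\dual{p,\, x-x^*}\leqslant \|p\|_*^2/\mu$.

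I do not anticipate a genuine obstacle: the only substantive step is the limiting argument that upgrades the plain subgradient inequality \cref{eq:sub} into its strongly convex form, and every remaining bound is a routine consequence of that one inequality together with the definition of a global minimizer and Young's inequality. The mild subtlety worth flagging is the need to justify $0\in\partial f(x^*)$, which follows at once from \cref{eq:sub} because $f(y)\geqslant f(x^*)$ for all $y\in V$.
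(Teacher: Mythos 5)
Your proof is correct, and the paper in fact gives no explicit proof of this corollary — it is stated without argument as the nonsmooth extension of \cref{eq:optgapmu,eq:Mxstar}. The key move you make, upgrading the plain subgradient inequality \cref{eq:sub} to its strongly convex form
\[
f(y) \geqslant f(x) + \dual{p,\, y-x} + \tfrac{\mu}{2}\|y-x\|^2
\]
via the convex-combination and limit $\alpha\to 1^-$ argument, is exactly the natural route, and the remaining deductions (specializing $y=x^*$, noting $0\in\partial f(x^*)$, then Young's inequality and Cauchy--Schwarz for the upper bounds) are routine and all valid. One tiny point worth making explicit for tidiness: in the step where you pass from $\mu\|x-x^*\|\leqslant\|p\|_*$ to $\dual{p,x-x^*}\leqslant\|p\|_*^2/\mu$ you implicitly divide by $\|x-x^*\|$, so you should note separately that the inequality is trivial when $x=x^*$. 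This does not affect correctness.
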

\section{Strong Lyapunov Functions}
\label{sec:strong-Lyapunov-functoin}
In this section we consider the autonomous dynamical system
\begin{equation}\label{eq:ode-G}
	x'(t) = \mathcal G(x(t)),\quad t>0,
\end{equation}
where $x: \mathbb R_+ \to \mathcal V$ and $\mathcal G: \mathcal V\to \mathcal V^*$ is a vector field. Here the Hilbert space $\mathcal V$ may not be the space $V$ for the original optimization \cref{eq:min}. We mainly consider smooth $\mathcal G$, with which the well-posedness of \cref{eq:ode-G} is usually evident by standard ODE theory, under mild condition on $\mathcal G$ (Lipschitz continuity).  Let $x^*$ be an equilibrium point of the dynamic system \cref{eq:ode-G}, i.e. $\mathcal G(x^*) = 0$. We are interested in the convergence of the trajectory $x(t)$ to $x^*$ as $t\to \infty$. 

\subsection{Strong Lyapunov condition and decay property}
Originally the Lyapunov function is constructed to study the stability of an equilibrium point. 
To obtain the convergence rate, we need a stronger condition than merely $-\nabla \mathcal L(x)\cdot \mathcal G(x)$ is locally positive. 
If there exist a compact subset $\mathcal W\subseteq \mathcal V$, a positive function $c(x)> 0, \forall x\in \mathcal W$, a constant $q\geqslant 1$, and a function $p(x):\mathcal V\to\mathbb R$ such that $\mathcal L(x)\geqslant 0$ 
\begin{equation}\label{eq:A}
	-\nabla \mathcal L(x)\cdot \mathcal G(x)\geqslant
	c(x)\mathcal L^q(x)+p^2(x)\quad \forall\,x\in \mathcal W.
\end{equation}
then we call $\mathcal L$ a locally ($\mathcal W\subset \mathcal V$) or globally ($\mathcal W=\mathcal V$) strong Lyapunov function. We use $\mathcal A(c,q,p,\mathcal W)$ to denote the strong Lyapunov condition \cref{eq:A} and simplify it as $\mathcal A(c,q,p)$ when $\mathcal W = \mathcal V$. 
%
\begin{thm}\label{thm:strongLya}
	Assume that $\mathcal L(x)$ satisfies $\mathcal A(c,q,p,\mathcal W)$. If the trajectory $x(t)$ to \cref{eq:ode-G} satisfies that $\{x(t):t\geqslant t_0\}\subset \mathcal W$ for some $t_0\geqslant 0$, then for all $t\geqslant t_0$,
	\begin{equation}\label{eq:ineq-L-q}
		\mathcal L(x(t))\leqslant 
		\left\{
		\begin{aligned}
			&\; \mathcal L(x(t_0))\exp \left (-\int_{t_0}^{t}c(x(s))\dd s \right )&&\text{ if } q=1,\\
			&\;	\left((q-1)\int_{t_0}^{t}c(x(s))\dd s+\mathcal L(x(t_0))^{1-q}\right)^{1/(1-q)}&&\text{ if } q>1.
		\end{aligned}
		\right.
	\end{equation}
\end{thm}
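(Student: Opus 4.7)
The plan is to reduce the claim to a scalar differential inequality satisfied by $\ell(t) := \mathcal L(x(t))$ and then to integrate it directly. First, by the chain rule applied to the autonomous ODE \cref{eq:ode-G},
$$
\ell'(t) = \nabla \mathcal L(x(t))\cdot x'(t) = \nabla \mathcal L(x(t))\cdot \mathcal G(x(t)).
$$
Since $x(t)\in \mathcal W$ for all $t\geqslant t_0$ by hypothesis, the strong Lyapunov condition $\mathcal A(c,q,p,\mathcal W)$ in \cref{eq:A} applies pointwise along the trajectory and yields
$$
\ell'(t) \leqslant -c(x(t))\,\ell(t)^q - p(x(t))^2 \leqslant -c(x(t))\,\ell(t)^q,
$$
where the nonnegative term $p^2$ is simply dropped. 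This alone already implies that $\ell$ is non-increasing on $[t_0,\infty)$, a fact I would use for housekeeping.

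Before solving the inequality, I would dispose of the degenerate case $\ell(t_0)=0$: monotonicity together with $\mathcal L\geqslant 0$ forces $\ell\equiv 0$ on $[t_0,\infty)$, and both asserted bounds then hold trivially (the right-hand side of \cref{eq:ineq-L-q} is zero, interpreting $0^{1/(1-q)}=0$ by continuity). Otherwise $\ell(t_0)>0$, and one works on intervals where $\ell$ stays positive so that the manipulations that follow are legitimate; if $\ell$ ever reaches zero, it remains there and the bound continues to hold.

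For $q=1$ the argument is the classical linear Gronwall setup $\ell'+c(x(t))\ell\leqslant 0$. Multiplying by the integrating factor $\exp\bigl(\int_{t_0}^{t}c(x(s))\dd s\bigr)$ and integrating from $t_0$ to $t$ gives the exponential bound immediately. For $q>1$ I would separate variables: dividing by $\ell^q$ and using the identity $\ell^{-q}\ell' = \tfrac{1}{1-q}(\ell^{1-q})'$ turns the inequality into
$$
\frac{\dd}{\dd t}\bigl(\ell(t)^{1-q}\bigr) \geqslant (q-1)\,c(x(t)),
$$
and integration from $t_0$ to $t$ yields
$$
\ell(t)^{1-q} \geqslant \ell(t_0)^{1-q} + (q-1)\int_{t_0}^{t} c(x(s))\dd s.
$$
Raising both sides to the negative power $1/(1-q)$ reverses the inequality and produces the algebraic decay bound stated in \cref{eq:ineq-L-q}.

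The main point needing care is not so much a technical obstacle as a bookkeeping issue: handling the case $\ell(t_0)=0$ and ensuring the division and power manipulations in the $q>1$ case are valid on a set where $\ell$ might eventually vanish. Both concerns are resolved by the monotonicity of $\ell$ that is built into the strong Lyapunov condition, so the whole argument is essentially a one-page exercise in separation of variables once the condition \cref{eq:A} has been converted into a scalar inequality along the trajectory.
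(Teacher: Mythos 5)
Your proof is correct and follows essentially the same route as the paper's: use the chain rule and the strong Lyapunov condition $\mathcal A(c,q,p,\mathcal W)$ to obtain the scalar differential inequality $\ell'(t)\leqslant -c(x(t))\ell(t)^q$ for $\ell(t)=\mathcal L(x(t))$, then integrate by separating variables (computing $\frac{\dd}{\dd t}\ell^{1-q}$ for $q>1$, or an integrating factor/Gronwall step for $q=1$). The only difference is that you explicitly dispose of the degenerate case $\ell(t_0)=0$ before dividing by $\ell^q$, a bit of housekeeping the paper omits but which does no harm to add.
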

\begin{proof}
	By the assumption $\mathcal A(c,q,p,W)$, for all $t\geqslant t_0$,
	\begin{equation}\label{eq:diff-L-q}
		\begin{split}
			\frac{\dd }{\dd t}\mathcal L(x(t)) 
			={}&\nabla \mathcal L(x(t))\cdot x'(t)
			=\nabla \mathcal L(x(t))\cdot \mathcal G(x(t))\\
			\leqslant{}& -c(x(t))\mathcal L^q(x(t))-p^2(x(t))\\
			\leqslant{}& -c(x(t))\mathcal L^q(x(t)).
		\end{split}
	\end{equation}
	The case $q=1$ is trivial from \cref{eq:diff-L-q}. Assume $q>1$. Then we have
	\[
	\frac{\mathrm d }{\mathrm dt}\mathcal L^{1-q}
	= (1-q)\frac{\mathcal L'}{\mathcal L^q}
	\geqslant c(x(t))(q-1),
	\]
	and it follows that
	\[
	\mathcal L^{1-q}-\mathcal L(0)^{1-q}
	\geqslant (q-1)\int_{t_0}^{t}c(x(s))\dd s, \quad t\geqslant t_0,
	\]
	which proves \cref{eq:ineq-L-q}.
\end{proof}

\subsection{Generalization to non-smooth convex optimization}
\label{sec:inclu}
Generally, the field $\mathcal G$ can be a set-value mapping, which may occur when $f$ is convex but non-smooth, which yields the differential inclusion
\begin{equation}\label{eq:di}
	x'(t) \in \mathcal G(x(t)),\quad t>0.
\end{equation}
To emphasize the dependence of sub-gradient $\partial f$, we modify the notation $\mathcal G(x)$ to $\mathcal G( x,\partial f( x))$ and use $\mathcal G( x, d(x))$ for one particular direction $d\in \partial f(x)$. Then \cref{eq:di} can be also written as $x' = \mathcal G( x, d(x))$.

Similarly a Lyapunov function $\mathcal L(x)$ may not be smooth and $\partial \mathcal L(x, \partial f)$ is used to emphasize the dependence of sub-gradient of $f$. For one particular direction $d\in \partial f(x)$,  $\partial \mathcal L(x, d)$ is a single-valued vector function. 

The strong Lyapunov condition can be generalized to the non-smooth case as follows. We call $\mathcal L: \mathcal V\to\mathbb R^+$ a locally Lyapunov 
function of the flow field $\mathcal G$ if $\mathcal L(x^*) = 0$ and there exist a nonnegative function $c(x)\geqslant 0$, a constant $q\geqslant 1$, a compact subset $\mathcal W\subset \mathcal V$, a function 
$p(x): \mathcal V\to\mathbb R$, and $d(x)\in \partial f(x)$ such that $\mathcal L(x)\geqslant 0$ for all $x\in \mathcal W$ and 
\begin{equation}\label{eq:strLnonsmooth}
	-\partial \mathcal L(x, d)\cdot \mathcal G(x, d)\geqslant
	c(x)\mathcal L^q(x)+p^2(x),\quad \forall\,x\in \mathcal W.
\end{equation}
If $c(x)>0$, for all $x\in \mathcal W$, then we call $\mathcal L$ locally ($\mathcal W\subset \mathcal V$) or globally ($\mathcal W=\mathcal V$) strong Lyapunov function. We still use $\mathcal A(c,q,p,\mathcal W)$ to denote the strong Lyapunov condition \cref{eq:A} and use $\mathcal A(c,q,p)$ when $\mathcal W = \mathcal V$. 

Note that when verifying the strong Lyapunov property \eqref{eq:strLnonsmooth}, for non-smooth functions, we only need to find one sub-gradient in $\partial f$.

\subsection{Difference of Lyapunov functions}
We then move to the discrete case. The following identities are obvious by the definition of $D_{\mathcal L}(\cdot,\cdot)$. When $\mathcal L$ is convex, various bounds on $D_{\mathcal L}(\cdot,\cdot)$ can be used to bound the difference of Lyapunov functions. 

\begin{lem}
	Assume $\mathcal L$ is differentiable. Then for any two points $x_k, x_{k+1}\in \mathcal V$
	\begin{equation}\label{eq:differenceLya}
		\mathcal L(x_{k+1}) - \mathcal L(x_k) = 
		\begin{cases}
			\dual{\nabla \mathcal L(x_k), x_{k+1} - x_k} + D_{\mathcal L}(x_{k+1}, x_k),\\
			\dual{\nabla \mathcal L(x_{k+1}), x_{k+1} - x_k} - D_{\mathcal L}(x_k, x_{k+1}).		
		\end{cases}
	\end{equation}
\end{lem}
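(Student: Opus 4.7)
The plan is to observe that both identities are immediate algebraic consequences of the very definition \cref{eq:Df} of the Bregman divergence $D_{\mathcal L}$, so no convexity, smoothness beyond differentiability, or additional bound from \cref{lem:Bregmandiv} is actually needed. The statement is in fact presented in the text as ``obvious by the definition,'' so my proof proposal is essentially to make that sentence precise.

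First I would expand $D_{\mathcal L}(x_{k+1}, x_k)$ using \cref{eq:Df} with $f=\mathcal L$, $y=x_{k+1}$, $x=x_k$, giving
\[
D_{\mathcal L}(x_{k+1}, x_k) = \mathcal L(x_{k+1}) - \mathcal L(x_k) - \dual{\nabla \mathcal L(x_k),\, x_{k+1}-x_k}.
\]
Rearranging this identity for $\mathcal L(x_{k+1}) - \mathcal L(x_k)$ yields the first branch of \cref{eq:differenceLya}. Then I would repeat the same step with the roles of $x_k$ and $x_{k+1}$ swapped: writing $D_{\mathcal L}(x_k, x_{k+1})$ by \cref{eq:Df} with $y=x_k$, $x=x_{k+1}$,
\[
D_{\mathcal L}(x_k, x_{k+1}) = \mathcal L(x_k) - \mathcal L(x_{k+1}) - \dual{\nabla \mathcal L(x_{k+1}),\, x_k-x_{k+1}},
\]
and noting that $\dual{\nabla \mathcal L(x_{k+1}),\, x_k-x_{k+1}} = -\dual{\nabla \mathcal L(x_{k+1}),\, x_{k+1}-x_k}$, rearrangement produces the second branch.

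There is no real obstacle here; the only thing to be careful about is the sign convention, since the two branches differ by whether the Taylor expansion is taken at $x_k$ or at $x_{k+1}$, which flips the sign of the remainder term $D_{\mathcal L}$. Because \cref{eq:Df} is stated for arbitrary $f\in\mathcal C^1$ and does not require convexity, the identities hold for any differentiable $\mathcal L$ and for arbitrary points $x_k, x_{k+1}\in\mathcal V$, as claimed.
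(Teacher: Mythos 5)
Your proof is correct and takes exactly the route the paper intends: the paper gives no explicit argument, merely remarking that the identities are ``obvious by the definition of $D_{\mathcal L}(\cdot,\cdot)$,'' and your proposal simply unfolds \cref{eq:Df} at each of the two base points and rearranges. Nothing further is needed.
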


The two points $x_k$ and $x_{k+1}$ will be connected by some numerical discretization of \cref{eq:ode-G}. For example, for the implicit Euler method, $x_{k+1} - x_k = \alpha \mathcal G(x_{k+1})$. Then the strong Lyapunov property can be applied to $\dual{\nabla \mathcal L(x_{k+1}),  \mathcal G(x_{k+1})}$ which will bring more negative terms on the upper bound of $\mathcal L(x_{k+1}) - \mathcal L(x_k)$ and convergence can be further derived. 

On the other hand, if we use the explicit Euler method $x_{k+1} - x_k = \alpha \mathcal G(x_k)$, the vector field is evaluated at the current point $x_k$, there will be a positive term $D_{\mathcal L}(x_{k+1}, x_k) \approx \alpha^2 \| x_{k+1} - x_k\|^2$ on the upper bound. We then use the strong Lyapunov function at $x_k$ to bring negative terms which scales like $\mathcal O(\alpha)$. Then choosing step size $\alpha$ small enough, we can cancel the positive $\mathcal O(\alpha^2)$ term.

By \cref{coro:bd-non}, for $f\in\mathcal S_\mu^0$, we can use the definition of the convexity to control the difference: for any  $d_{k+1}\in \partial f(x_{k+1})$
\begin{equation}\label{eq:nonsmoothconvex}
	f(x_{k+1}) - f(x_k) \leqslant \langle d_{k+1}, x_{k+1} - x_k \rangle - \frac{\mu}{2}\| x_{k+1} - x_k \|^2.
\end{equation}
Besides the gradient at two end points $\{x_k, x_{k+1}\}$, we may also use another intermediate point.
\begin{lem}\label{lm:3pts}
	For $f\in S_{\mu, L}^1$ and arbitrary three points $\{x_k, y, x_{k+1}\}$, we have
	\begin{align*}
		f(x_{k+1}) - f(x_k) \leqslant {}& \langle \nabla f(y), x_{k+1} - x_k \rangle + \frac{L}{2}\| x_{k+1} - y\|^2 \\
		&\quad- \max \left\{ \frac{\mu}{2}\| y-x_k\|^2, \frac{1}{2L}\| \nabla f(y) - \nabla f(x_k)\|_*^2 \right \}.
	\end{align*}
\end{lem}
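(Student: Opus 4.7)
The plan is to insert the intermediate point $y$ and split the difference as
$$
f(x_{k+1}) - f(x_k) = \bigl[f(x_{k+1}) - f(y)\bigr] + \bigl[f(y) - f(x_k)\bigr],
$$
then estimate each bracket separately, basing the linear term at the common point $y$ so the two inner products telescope into $\dual{\nabla f(y), x_{k+1} - x_k}$. Both estimates will come directly from \cref{lem:Bregmandiv}, invoked with different pairs of arguments.

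For the forward piece, I would apply the Lipschitz-gradient upper bound \eqref{eq:DL} to $D_f(x_{k+1}, y)$, since $f\in \mathcal S_{\mu,L}^1 = \mathcal S_{\mu,L}^{1,1}\subset \mathcal C_L^{1,1}$; this gives
$$
f(x_{k+1}) - f(y) \leqslant \dual{\nabla f(y), x_{k+1} - y} + \frac{L}{2}\nm{x_{k+1} - y}^2,
$$
which accounts for the $\frac{L}{2}\nm{x_{k+1}-y}^2$ term in the claimed inequality.

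For the backward piece, I would rewrite
$$
f(y) - f(x_k) = \dual{\nabla f(y), y - x_k} - D_f(x_k, y)
$$
and then apply two competing lower bounds on the Bregman divergence $D_f(x_k, y)$ from \cref{lem:Bregmandiv}: the $\mu$-convexity bound \eqref{eq:Dmu} gives $D_f(x_k, y)\geqslant \frac{\mu}{2}\nm{y-x_k}^2$, while the co-coercivity-type lower bound \eqref{eq:philowerL}, valid because $f\in \mathcal S^1_{0,L}$, gives $D_f(x_k, y)\geqslant \frac{1}{2L}\nm{\nabla f(y) - \nabla f(x_k)}_*^2$. Each produces an upper bound for $f(y) - f(x_k)$.

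Finally, summing the forward estimate with either backward estimate and collapsing the inner products yields two admissible upper bounds on $f(x_{k+1}) - f(x_k)$; taking the one with the more negative correction produces the maximum in the statement. There is no genuine obstacle here: the argument is a three-point telescoping of \cref{lem:Bregmandiv}. The only point requiring care is using that membership in $\mathcal S^1_{\mu,L}$ simultaneously provides the Lipschitz descent upper bound \emph{and} the strong-convexity/co-coercivity lower bounds on $D_f$, which is exactly the identification $\mathcal S_{\mu,L}^{1,1} = \mathcal S^{1}_{\mu, L}$ recalled in \cref{sec:bd-convex}.
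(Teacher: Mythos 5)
Your proposal is correct and follows essentially the same route as the paper's proof: split at the intermediate point $y$, bound $f(x_{k+1})-f(y)$ via \eqref{eq:DL}, and bound $f(y)-f(x_k)$ by writing it as $\dual{\nabla f(y), y-x_k}-D_f(x_k,y)$ and applying \eqref{eq:Dmu} or \eqref{eq:philowerL}. The only difference is that you spell out the Bregman-divergence bookkeeping and the class identification $\mathcal S_{\mu,L}^{1,1}=\mathcal S^1_{\mu,L}$ explicitly, which the paper leaves implicit.
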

\begin{proof}
	We split the difference $f(x_{k+1}) - f(x_k) = f(x_{k+1}) - f(y) + f(y) - f(x_k)$. For the first term, we use \eqref{eq:DL}
	$$
	f(x_{k+1}) - f(y) \leqslant \langle \nabla f(y), x_{k+1} - y \rangle + \frac{L}{2}\| x_{k+1} - y\|^2
	$$
	and for the second term, we use either \eqref{eq:Dmu} or \eqref{eq:philowerL}
	$$
	f(y) - f(x_k) \leqslant \langle \nabla f(y), y - x_k \rangle -  \max \left \{ \frac{\mu}{2}\| y-x_k\|^2, \frac{1}{2L}\| \nabla f(y) - \nabla f(x_k)\|_*^2 \right \}.
	$$
	Summing these two inequalities completes the proof of this lemma.
\end{proof}

\subsection{Decay rate of discrete Lyapunov functions}
Start from an initial guess $x_0$, for $k=0,1,\ldots,$ a generic one step method for \cref{eq:ode-G} can be written as $x_{k+1} = E(\alpha_k, x_k)$, where $\alpha_k$ is the time step size. A discrete Lyapunov sequence is naturally defined as $\{\mathcal L_k = \mathcal L(x_k), k = 0, 1, 2, \ldots \}$. 

Although the strong Lyapunov property ensures the decay of $\mathcal L(x(t))$ in the continuous level, it is nontrivial to design a numerical scheme that preserves the decay property in the discrete level, i.e., the decay of the sequence $\{\mathcal L_k \}$. 

To establish the convergence rate, the key is to have a discrete version of \cref{thm:strongLya} which will yield the convergence (or boundness) of $\{\mathcal L_k \}$, the discrete analogue of $\mathcal L(x(t))$.
%
%
We present the following decay rate of positive sequences satisfying certain inequalities. 

\begin{thm}\label{thm:ode-ineq-q>1-dis}
	Let $\{A_k:k\geqslant 0\}$ be a positive sequence. 
	\begin{enumerate}
		\item If
		$$			A_{k+1}-A_k\leqslant -\alpha_k A_{k}-p_k^2,\quad k\geqslant 0,$$
		holds for some nonnegative sequence $\{\alpha_k:k\geqslant 0\}\subset[0,1)$,
		then
		\begin{equation}\label{eq:est-Ak-1}
			A_k\leqslant \rho_k A_{0} \text{ and }\quad \sum_{i=0}^{\infty} \frac{p_i^2}{\rho_{i}}\leqslant CA_{0},
		\end{equation}
		where $$\rho_0=1,\quad\rho_k = \prod_{i=0}^{k-1}(1-\alpha_i)\in (0,1],\quad k\geqslant 1.$$
		
		\item If
		$$			A_{k+1}-A_k\leqslant -\alpha_k A_{k+1}-p_k^2,\quad k\geqslant 0,$$
		holds for some nonnegative sequence $\{\alpha_k:k\geqslant 0\}$, then \cref{eq:est-Ak-1} holds true with
		$$\rho_0=1,\quad\rho_k = \prod_{i=0}^{k-1}\frac{1}{1+\alpha_i}\in (0,1],\quad k\geqslant 1.$$
		
		\item If 
		\begin{equation}\label{eq:ode-ineq-q>3-dis-}
			A_{k+1}-A_k\leqslant -\alpha A_{k}^2,\quad k\geqslant 0,
		\end{equation}
		holds for some $\alpha > 0$, then
		\begin{equation}\label{eq:ineq-q>3-dis-}
			A_k \leqslant \frac{A_0}{1+ \alpha A_0 k}.
		\end{equation}
		
		\item If 
		\begin{equation}\label{eq:ode-ineq-q>4-dis-}
			A_{k+1}-A_k\leqslant -\alpha A_{k+1}^2,\quad k\geqslant 0,
		\end{equation}
		holds for some $\alpha > 0$, then
		\begin{equation}\label{eq:ineq-q>4-dis-}
			A_k \leqslant  (1+\delta)\frac{A_0}{1+ \alpha A_0 k}, \text{ with } \delta = \frac{\alpha A_0}{1+ \alpha A_0}.
		\end{equation}
	\end{enumerate}
\end{thm}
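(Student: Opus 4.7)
The plan is to split the four bounds into two groups, according to whether the recursion's right-hand side is linear or quadratic in $A$: parts (1) and (2) will be handled by an integrating-factor telescoping argument, while parts (3) and (4) will be handled by the reciprocal trick.

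For part (1), I would introduce the integrating factor $1/\rho_{k+1}$, where $\rho_{k+1} = (1-\alpha_k)\rho_k > 0$, and divide the inequality $A_{k+1} \leqslant (1-\alpha_k) A_k - p_k^2$ through. The prefactor $(1-\alpha_k)$ collapses into $\rho$, yielding
\[
\frac{A_{k+1}}{\rho_{k+1}} + \frac{p_k^2}{\rho_{k+1}} \leqslant \frac{A_k}{\rho_k},
\]
which telescopes from $i=0$ to $k-1$ to give $A_k/\rho_k + \sum_{i=0}^{k-1} p_i^2/\rho_{i+1} \leqslant A_0$. Both conclusions of part (1) follow at once, since $\rho_i \geqslant \rho_{i+1}$ lets the shifted index be absorbed into the constant $C$. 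Part (2) proceeds identically with $\rho_{k+1} = \rho_k/(1+\alpha_k)$: after rewriting the hypothesis as $(1+\alpha_k) A_{k+1} + p_k^2 \leqslant A_k$ and dividing by $\rho_k$, the telescoping inequality gives both estimates directly, with no index shift.

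For part (3), positivity of $\{A_k\}$ together with $A_{k+1} \leqslant A_k - \alpha A_k^2$ forces $A_{k+1} \leqslant A_k$, so the reciprocal identity reads
\[
\frac{1}{A_{k+1}} - \frac{1}{A_k} = \frac{A_k - A_{k+1}}{A_k A_{k+1}} \geqslant \frac{\alpha A_k^2}{A_k A_{k+1}} = \frac{\alpha A_k}{A_{k+1}} \geqslant \alpha.
\]
Summing $k$ copies and inverting yields $A_k \leqslant A_0/(1+\alpha A_0 k)$.

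Part (4) is the main obstacle. The reciprocal identity still gives $1/A_{k+1} - 1/A_k \geqslant \alpha A_{k+1}/A_k$, but the right-hand side now involves the implicit ratio $A_{k+1}/A_k$, for which no elementary lower bound is immediate. My plan is to argue by induction on the candidate bound $T_k := (1+\delta) A_0/(1+\alpha A_0\, k)$: since $x \mapsto x(1+\alpha x)$ is strictly increasing on $\mathbb{R}_+$, the chain $A_{k+1}(1+\alpha A_{k+1}) \leqslant A_k \leqslant T_k$ forces $A_{k+1} \leqslant T_{k+1}$ whenever $T_{k+1}(1+\alpha T_{k+1}) \geqslant T_k$, and expanding this last inequality reduces to a purely algebraic condition relating $\delta$ and $\alpha A_0$. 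The value $\delta = \alpha A_0/(1+\alpha A_0)$ is precisely calibrated so that this condition can be verified across the relevant range of $k$; keeping the constants sharp in this one computation, in particular handling the early inductive steps where the available slack $(1+\alpha A_{k+1}) - 1$ is largest, is where the proof does real work.
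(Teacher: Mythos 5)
Your telescoping argument for parts (1) and (2) fills in exactly what the paper declares ``straightforward and thus skipped,'' and your part (3) is the paper's reciprocal argument verbatim; no concerns there.

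For part (4), your monotonicity framing---use that $x\mapsto x(1+\alpha x)$ is increasing to pass from $A_{k+1}(1+\alpha A_{k+1})\leqslant A_k\leqslant T_k$ to $A_{k+1}\leqslant T_{k+1}$, provided $T_{k+1}(1+\alpha T_{k+1})\geqslant T_k$---is a tidy repackaging, but after cancelling the common factor $(1+\delta)A_0$ that proviso simplifies to exactly $\delta\,(1+\alpha A_0 k)\geqslant\alpha A_0$, which is the same inequality the paper needs in the ``otherwise'' branch of its dichotomy (to pass from $A_{k+1}<A_k/(1+\delta)\leqslant A_0/(1+\alpha A_0 k)$ to $T_{k+1}$). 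With the prescribed $\delta=\alpha A_0/(1+\alpha A_0)$ this inequality holds iff $k\geqslant 1$; it fails at $k=0$. You correctly suspected the early steps were the pinch point, but the sketch does not close the step $k=0\to 1$, and exploiting the slack $A_0<T_0$ does not rescue it: replacing $T_0$ by $A_0$ turns the requirement into $T_1(1+\alpha T_1)\geqslant A_0$, which after simplification is $1+3\alpha A_0+2(\alpha A_0)^2\geqslant(\alpha A_0)^3$ and fails for $\alpha A_0$ large. The paper's proof differs by a case split on $A_{k+1}$ versus $A_k/(1+\delta)$: the first branch uses the reciprocal inequality $1/A_{k+1}-1/A_k\geqslant\alpha A_{k+1}/A_k\geqslant\alpha/(1+\delta)$, which closes the step for \emph{every} $k$ unconditionally, whereas your uniform treatment does not isolate this branch, so your argument covers strictly less ground than the paper's even where the paper's succeeds. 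If you keep the monotonicity route, you must argue the $k=0$ step separately (and notice, while doing so, that the case distinction in the paper is not merely cosmetic).
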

\begin{proof}
	The cases (1) and (2) are straightforward and thus skipped. We now consider case (3). 
	Inequality \eqref{eq:ode-ineq-q>3-dis-} implies $A_{k+1}\leqslant A_k$.
	Consider the reciprocal of the sequence
	\begin{align*}
		\frac{1}{A_{k+1}} -\frac{1}{A_{k}} = \frac{A_k - A_{k+1}}{A_{k+1}A_k}\geqslant \alpha \frac{A_{k}}{A_{k+1}} \geqslant \alpha.
	\end{align*}
	Then sum to get the estimate \eqref{eq:ineq-q>3-dis-}.
	
	For a sequence satisfying \eqref{eq:ode-ineq-q>4-dis-}, we still have $A_{k+1}\leqslant A_k$ and 
	\begin{equation}\label{eq:Ak}
		\frac{1}{A_{k+1}} -\frac{1}{A_{k}} = \frac{A_k - A_{k+1}}{A_{k+1}A_k}\geqslant \alpha \frac{A_{k+1}}{A_{k}}.
	\end{equation}
	Obviously \eqref{eq:ineq-q>4-dis-} holds for $k=0$. Assume it holds for $k\geqslant 1$. If $A_{k+1}\geqslant A_k/(1+\delta)$, then \eqref{eq:Ak} implies 
	\begin{align*}
		\frac{1}{A_{k+1}} \geqslant \frac{1}{A_{k}} + \frac{\alpha}{1+\delta} \geqslant \frac{ 1+ \alpha A_0 (k + 1)}{A_0(1+\delta)}.
	\end{align*}
	Namely \eqref{eq:ineq-q>4-dis-} holds for $k+1$. 
	Otherwise $A_{k+1}< A_k/(1+\delta)$, then 
	\begin{align*}
		A_{k+1} < \frac{1}{1+\delta} A_k \leqslant \frac{A_0}{1+ \alpha A_0 k} \leqslant (1+\delta)\frac{A_0}{1+ \alpha A_0 (k+1)}.
	\end{align*}
	The last inequality can be easily verified by the definition of $\delta$. 
\end{proof}


The convergence rates $\rho_k$ given in \cref{thm:ode-ineq-q>1-dis} depend on the step size $\{\alpha_k:k\geqslant 0\}$. Within the allowed range of $\alpha_k$, the larger is the step size, the better is the decay rate. 

\subsection{Decay rate of parameters}
For most accelerated optimization methods, there is a sequence of parameter $\{\gamma_k\}$ defined by
\begin{equation}\label{eq:gamma}
\gamma_{k+1}-\gamma_{k} = \alpha_k(\mu - \gamma_{k+1}),
\end{equation}
which is an implicit Euler discretization of ODE $\gamma'= \mu - \gamma$. The step size $\alpha_k$ is determined by the parameters $L$ and/or $\mu$. Let
	\begin{equation}\label{eq:lambdak}
		\rho_0=1,\quad\rho_k = 
		\prod_{i=0}^{k-1}\frac{1}{1+\alpha_i},\quad k\geqslant 1.
	\end{equation}
	By definition \cref{eq:lambdak}, $\{\rho_k \}$ satisfies the relation
	$$
	\rho_{k+1} - \rho_k = - \alpha_k \rho_{k+1}, \quad \rho_{k+1} = \frac{1}{1+\alpha_k}\rho_k, 
	$$
	which implies $\rho_k$ is monotone decreasing. 
	The formula \eqref{eq:gamma} of $\gamma_k$ yields
	\[
	\frac{1}{1+\alpha_k} = \frac{\gamma_{k+1}}{\gamma_k+\mu \alpha_k}
	\leqslant \frac{\gamma_{k+1}}{\gamma_k},
	\]
	and it follows that
	\begin{equation}\label{eq:lowalpharho}
		\rho_k \leqslant 
		\frac{\gamma_k}{\gamma_0}.
	\end{equation}
Namely the decay rate of $\gamma_k$ will give an upper bound of the convergence rate $\rho_k$. We will present estimates when $\alpha_k$ and $\gamma_k$ are related. 
We first present an identity on $\gamma_k$ in terms of the ratio $\alpha_k/\gamma_k$. 
\begin{lem}\label{lem:decay}
	Given $\mu\geqslant 0,\,\gamma_0>0$ and some positive real sequence $\{\alpha_k\}_{k=0}^\infty$, define $\{\gamma_k\}_{k=0}^\infty$ by \eqref{eq:gamma}. Then $\gamma_k>0$ and we have
	\begin{equation}\label{eq:gammak}
		\gamma_k = \frac{ \gamma_0 \prod_{i=0}^{k-1} (1+t_i\mu)}{1 + \gamma_0\left [\prod_{i=0}^{k-1} (1+t_i\mu) - 1\right ]/\mu},
	\end{equation}
	where $t_k = \alpha_k/\gamma_k$ and for $\mu=0$ we made the convention
	\begin{equation}\label{eq:conven}
		\frac{1}{\mu}\left [\prod_{i=0}^{k-1} (1+t_i\mu) - 1\right ]:=\sum_{i=0}^{k-1} t_i,
	\end{equation}
	which is compatible with the right hand side as $\mu\to 0$.
\end{lem}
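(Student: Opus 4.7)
The claim splits naturally into three pieces: positivity of $\gamma_k$, the formula \eqref{eq:gammak} for $\mu>0$, and compatibility with the convention \eqref{eq:conven} as $\mu\to 0$. The defining relation \eqref{eq:gamma} is easily rewritten in the explicit form
\[
\gamma_{k+1} = \frac{\gamma_k+\alpha_k\mu}{1+\alpha_k},
\]
so positivity follows by a trivial induction: $\gamma_0>0$, and both numerator and denominator are positive whenever $\gamma_k>0$, $\alpha_k>0$, $\mu\geqslant 0$.

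The heart of the proof is finding a substitution that linearizes the recursion. I would work with $r_k := 1/\gamma_k - 1/\mu$ (assuming $\mu>0$ for now). A short calculation from $\gamma_{k+1} = (\gamma_k+\alpha_k\mu)/(1+\alpha_k)$ gives
\[
\mu - \gamma_{k+1} \;=\; \frac{\mu - \gamma_k}{1+\alpha_k},\qquad \mu\gamma_{k+1} \;=\; \frac{\mu(\gamma_k+\alpha_k\mu)}{1+\alpha_k},
\]
so that, after dividing and using $t_k = \alpha_k/\gamma_k$,
\[
r_{k+1} \;=\; \frac{\mu - \gamma_{k+1}}{\mu\gamma_{k+1}} \;=\; \frac{\mu - \gamma_k}{\mu\gamma_k}\cdot \frac{\gamma_k}{\gamma_k+\alpha_k\mu} \;=\; \frac{r_k}{1+t_k\mu}.
\]
This telescopes to $r_k = r_0/P_k$ with $P_k := \prod_{i=0}^{k-1}(1+t_i\mu)$. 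Solving $1/\gamma_k = 1/\mu + r_0/P_k$ for $\gamma_k$ and rearranging algebraically yields exactly \eqref{eq:gammak}. The only mildly tricky step here is recognizing the right variable; I expect that to be the main obstacle, since $\alpha_k$ and $t_k$ both appear and one must choose which.

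Finally, for the $\mu=0$ case I would either verify directly that the recursion becomes $1/\gamma_{k+1} = 1/\gamma_k + t_k$, giving $\gamma_k = \gamma_0/(1+\gamma_0\sum_{i=0}^{k-1}t_i)$, or observe that expanding $[\prod_{i=0}^{k-1}(1+t_i\mu) - 1]/\mu$ as a polynomial in $\mu$ yields $\sum_{i=0}^{k-1}t_i + O(\mu)$, so the convention \eqref{eq:conven} is precisely the $\mu\to 0$ limit of \eqref{eq:gammak} and both the formula and its derivation extend continuously. This makes the statement uniform in $\mu\geqslant 0$ and completes the proof.
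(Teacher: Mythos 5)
Your proof is correct and follows essentially the same route as the paper: both hinge on the observation that $1/\gamma_k - 1/\mu$ satisfies a scalar linear recursion with ratio $1/(1+t_k\mu)$, then telescope. You derive it via the factored form $r_{k+1} = (\mu-\gamma_{k+1})/(\mu\gamma_{k+1})$ while the paper differences $1/\gamma_{k+1}-1/\gamma_k$ directly, but these are minor algebraic rearrangements of the same argument; your explicit induction for $\gamma_k>0$ and your spelled-out $\mu=0$ case are nice additions the paper leaves implicit.
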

\begin{proof}
	Consider the difference
	$$
	\frac{1}{\gamma_{k+1}} -\frac{1}{\gamma_{k}} = \frac{\gamma_k - \gamma_{k+1}}{\gamma_{k+1}\gamma_k} = \frac{\alpha_k(\gamma_{k+1} - \mu)}{\gamma_{k+1}\gamma_k} = \frac{t_k(\gamma_{k+1} - \mu)}{\gamma_{k+1}},
	$$
	which implies the recursion, for $\mu > 0$, 
	\begin{equation}\label{eq:gammatk}
		\frac{1}{\gamma_{k+1}} - \frac{1}{\mu}  = \frac{1}{1+t_k\mu}\left (\frac{1}{\gamma_{k}} - \frac{1}{\mu} \right ).
	\end{equation}
	Starting from $\gamma_0>0$ and then solving \eqref{eq:gammatk}, we get the formula \cref{eq:gammak}.
\end{proof}

The identity also implies if $\gamma_0 > \mu$, then $\{\gamma_k\}$ is decreasing and converges to $\mu$ from above. If $\gamma_0 < \mu$, then $\{\gamma_k\}$ is increasing and converges to $\mu$ from below. And if $\gamma_0 = \mu$, then $\gamma_k = \mu$ for all $k\geqslant 1$. For all cases, we have
$$
\min\{\gamma_0,\mu\} \leq \lambda_k \leq \max\{\gamma_0,\mu\}, \quad k=0,1,2,\ldots .
$$

We then consider the ratio $\alpha_k^2/\gamma_k$ is bounded below, which  leads to accelerated rate. For simplicity, we present the results for the case $\gamma_0 = r L\geqslant \mu$. 
Refined analysis involving optimized $\gamma_0$ can be found in \citet[Lemma 2.2.4]{Nesterov:2018Lectures}.
\begin{lem}
	Given $L\geqslant \mu\geqslant 0,\,\gamma_0= r L\geqslant \mu$, define $\{(\alpha_k,\gamma_k)\}_{k=0}^\infty$ by 	
	\begin{equation}\label{eq:gk-ak}
		\left\{
		\begin{aligned}
			\gamma_{k+1}  ={}&	\gamma_k+ \alpha_k (\mu - \gamma_{k+1}),\\
			L\alpha_k^2 = {}&	\gamma_k(1 +B\alpha_k), \quad \alpha_k >0,
		\end{aligned}
		\right.
	\end{equation}
	where $B\geqslant 0$.  
	Then $\gamma_k>0$ and we have the following bound of $\rho_k$.
	\begin{itemize}
		\item If $B=0$, then 
		\begin{equation}\label{eq:decayrho-B0}
			\rho_{k} \leqslant 
			\min\left\{
			\left (\frac{\sqrt{r+1}+1}{\sqrt{r+1}+1 + \sqrt{r} k} \right )^2,\,
			\left(1+\sqrt{\frac{\mu}{L}}\right)^{-k}
			\right\}.
		\end{equation}
		\item 	If $B\geqslant 1/2$, then 
		\begin{equation}\label{eq:decayrho-B>1/2}
			\rho_{k} \leqslant 
			\min\left\{
			\left (\frac{2}{2 + \sqrt{r} \, k} \right )^2,\,
			\left(1+\sqrt{\frac{\mu}{L}}\right)^{-k}
			\right\}.
		\end{equation}
	\end{itemize}
\end{lem}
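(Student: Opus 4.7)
The plan is to treat the two entries of the minimum separately, then combine them.

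\textbf{Exponential bound.} First I would establish $\rho_k \leqslant (1+\sqrt{\mu/L})^{-k}$. Since $\gamma_0\geqslant \mu$, the discussion following \cref{lem:decay} yields $\gamma_k\geqslant \mu$ for all $k$. Together with the defining relation $L\alpha_k^2 = \gamma_k(1+B\alpha_k) \geqslant \gamma_k$, this gives $\alpha_k \geqslant \sqrt{\mu/L}$ for any $B\geqslant 0$. Plugging into the definition \eqref{eq:lambdak} of $\rho_k$ immediately yields
$$
\rho_k = \prod_{i=0}^{k-1}\frac{1}{1+\alpha_i} \leqslant \frac{1}{(1+\sqrt{\mu/L})^k}.
$$

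\textbf{Polynomial bound.} The key device is to work with $b_k := \sqrt{\rho_k}$ and show $1/b_k$ grows linearly in $k$. From $\rho_{k+1}(1+\alpha_k)=\rho_k$ one has $b_{k+1}\sqrt{1+\alpha_k}=b_k$, so
$$
\frac{1}{b_{k+1}} - \frac{1}{b_k}
= \frac{\sqrt{1+\alpha_k}-1}{b_k}
= \frac{\alpha_k}{b_k\,(\sqrt{1+\alpha_k}+1)}.
$$
Next I would combine the estimate $\rho_k \leqslant \gamma_k/\gamma_0$ from \eqref{eq:lowalpharho} with $L\alpha_k^2 = \gamma_k(1+B\alpha_k)$ to extract the structural lower bound
$$
\alpha_k \;\geqslant\; \sqrt{r\,\rho_k\,(1+B\alpha_k)} \;=\; \sqrt{r}\,b_k\sqrt{1+B\alpha_k},
$$
which, after substitution, gives the clean recursion
$$
\frac{1}{b_{k+1}} - \frac{1}{b_k} \;\geqslant\; \frac{\sqrt{r\,(1+B\alpha_k)}}{\sqrt{1+\alpha_k}+1}.
$$

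\textbf{The two cases.} For $B\geqslant 1/2$, the crux is the algebraic inequality $2\sqrt{1+B\alpha_k}\geqslant \sqrt{1+\alpha_k}+1$ for all $\alpha_k\geqslant 0$; squaring twice reduces it to $(16B-8)\alpha_k + (4B-1)^2\alpha_k^2 \geqslant 0$, which is manifest. This yields $1/b_{k+1} - 1/b_k \geqslant \sqrt{r}/2$, and telescoping from $1/b_0=1$ gives $1/b_k \geqslant (2+k\sqrt{r})/2$, hence $\rho_k = b_k^2 \leqslant (2/(2+k\sqrt{r}))^2$. For $B=0$, I would instead upper bound the denominator by using $\alpha_k \leqslant \sqrt{r}$ (consequence of $\gamma_k\leqslant \gamma_0 = rL$, since $\gamma_k$ is monotone decreasing when $\gamma_0\geqslant\mu$); combined with $\sqrt{r}\leqslant r$ in the regime $r\geqslant 1$, this gives $\sqrt{1+\alpha_k}+1 \leqslant \sqrt{r+1}+1$, and so $1/b_{k+1}-1/b_k \geqslant \sqrt{r}/(\sqrt{r+1}+1)$. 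Telescoping yields $1/b_k \geqslant 1 + k\sqrt{r}/(\sqrt{r+1}+1)$, which rearranges to the stated bound. Taking the minimum with the exponential bound finishes the proof.

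\textbf{Expected obstacle.} The main difficulty is quantitative: pinning down the right constants so that the $B\geqslant 1/2$ case gives the improved factor $1/2$ (versus $1/(\sqrt{r+1}+1)$ for $B=0$) comes down entirely to the inequality $2\sqrt{1+B\alpha_k}\geqslant \sqrt{1+\alpha_k}+1$, and showing this is sharp at $B=1/2$ is the core algebraic step. A secondary bookkeeping point is the interplay between $\alpha_k \leqslant \sqrt{r}$ and the target bound $\sqrt{1+\alpha_k}\leqslant \sqrt{1+r}$, which is cleanest when $r\geqslant 1$; the $\mu>0$ case in \eqref{eq:lowalpharho} rather than equality needs to be tracked to ensure the polynomial bound survives alongside the exponential one.
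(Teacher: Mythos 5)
Your proposal follows the same route as the paper: rewrite the telescoping recursion in terms of $1/\sqrt{\rho_k}$, lower-bound $\alpha_k/\sqrt{\rho_k}$ via $\rho_k\leqslant\gamma_k/\gamma_0$ and $L\alpha_k^2=\gamma_k(1+B\alpha_k)$, then treat $B=0$ and $B\geqslant 1/2$ separately and derive the linear bound from $\alpha_k\geqslant\sqrt{\mu/L}$. The only difference is in presentation: you make explicit the algebraic step behind the paper's terse ``an elementary calculation proves $\chi(t)\geqslant\chi(0)$'' for $B\geqslant 1/2$ (your squaring-twice reduction to $(16B-8)\alpha_k+(4B-1)^2\alpha_k^2\geqslant 0$ is correct), and you correctly write the middle step as an inequality where the paper writes it with $=$ signs.

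One thing worth flagging: your parenthetical ``combined with $\sqrt{r}\leqslant r$ in the regime $r\geqslant 1$'' is not mere bookkeeping. The chain $\alpha_k\leqslant\sqrt{r}\Rightarrow\sqrt{1+\alpha_k}+1\leqslant\sqrt{1+r}+1$ genuinely requires $r\geqslant 1$, and for $r<1$ the displayed polynomial bound in the $B=0$ case can fail already at $k=1$ (e.g.\ $r=1/4$ gives $\rho_1=1/(1+\sqrt{r})=2/3$ but the claimed bound evaluates to roughly $0.654$). The paper does not state $r\geqslant 1$ as a hypothesis, so you have in fact identified a small gap in the statement as written; the correct general conclusion from $\alpha_k\leqslant\sqrt{r}$ is $\rho_k\leqslant\bigl((\sqrt{1+\sqrt{r}}+1)/(\sqrt{1+\sqrt{r}}+1+\sqrt{r}\,k)\bigr)^2$, which matches the printed bound precisely when $r\geqslant 1$. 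This is worth noting, but it does not affect the structure of the argument, which you have reproduced faithfully.
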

\begin{proof}
Consider the difference of $1/\sqrt{\rho_k}$ and use \eqref{eq:lowalpharho}, we get
	\[
	\begin{aligned}
		\frac{1}{\sqrt{\rho_{k+1}}} - \frac{1}{\sqrt{\rho_k}}
		={}& \frac{\rho_k - \rho_{k+1}}{\sqrt{\rho_k \rho_{k+1}} (\sqrt{\rho_k} + \sqrt{\rho_{k+1}})}=\frac{\alpha_k}{\sqrt{\rho_k}(1+\sqrt{1+\alpha_k})}
		= \sqrt{r}\frac{\sqrt{1+B\alpha_k}}{1+\sqrt{1+\alpha_k}}.
	\end{aligned}
	\]
	If $B = 0$, then $L\alpha_k^2=\gamma_k\leqslant \max\{\gamma_0,\mu\} = rL$. Therefore, $\alpha_k\leqslant \sqrt{r}$ and thus 
	\[
	\frac{1}{\sqrt{\rho_{k+1}}} - \frac{1}{\sqrt{\rho_k}}=
	\frac{1}{1+\sqrt{1+\alpha_k}}\geqslant 
	\frac{1}{\sqrt{r+1} + 1}.
	\]
	Therefore, we obtain 
	\begin{equation}\label{eq:est-1}
		\rho_k\leqslant  \left (\frac{\sqrt{r+1}+1}{\sqrt{r+1}+1 + \sqrt{r} k} \right )^2.
	\end{equation}
	If $B\geqslant 1/2$, then consider the function 
	\[
	\chi(t): = \frac{\sqrt{1+Bt}}{1+\sqrt{1+t}},\quad\,t\geqslant 0.
	\]
	An elementary calculation proves that $\chi(t)\geqslant \chi(0)=1/2$ for all $t\geqslant 0$. Therefore, we have
	\[
	\frac{1}{\sqrt{\rho_{k+1}}} - \frac{1}{\sqrt{\rho_k}}
	\geqslant \frac{1}{2}\sqrt{r},
	\]
	which implies 
	\begin{equation}\label{eq:bd-mu0-NAG}
		\rho_k \leqslant \left (\frac{2}{2 + \sqrt{r} k} \right )^2.
	\end{equation}		
	Note that both of the sublinear rates \cref{eq:est-1,eq:bd-mu0-NAG} hold for all $\mu\geqslant 0$. 
	
	If $\mu>0$, 
	then it is evident that 
	\begin{equation}\label{eq:alpha2lowerbound}
		\alpha_k^2=\frac{\gamma_k}{L}(1+B\alpha_k)\geqslant
		\frac{\gamma_k}{L} \geqslant \frac{\mu}{L},
	\end{equation}
	so we have that
	\[
	\rho_k\leqslant 
	\left(1+\sqrt{\frac{\mu}{L}}\right)^{-k}.
	\]	
	This finishes the proof of this lemma.
\end{proof}

\section{Gradient Flow and Euler Methods}
\label{sec:GD-Euler}
In this section we will study the gradient flow. The implicit Euler method is equivalent to the proximal method and the explicit Euler method to gradient descent methods. Convergence analyses are derived from the strong Lyapunov property for various Lyapunov functions. 
\subsection{Gradient flow}\label{sec:gradient}
The simplest dynamic system is the gradient flow
\begin{equation}\label{eq:gf}
	x'(t) = -\nabla f(x(t)),
\end{equation}
with initial condition $x(0) = x_0$. Namely $\mathcal G(x) = - \nabla f(x)$. 
A natural Lyapunov function is the so-called optimality gap
\begin{equation}
	\label{eq:Lt-fx}
	\mathcal L(x) = f(x)-f(x^*).
\end{equation}
\subsubsection{Strongly convex case}
To get the convergence rate, we verify the strong Lyapunov property. When $f\in\mathcal S_{\mu}^{1}$ with $\mu>0$, by \cref{eq:optgapmu}, we get the global strong Lyapunov property $\mathcal A(2\mu, 1, 0)$ or $\mathcal A(\mu, 1, p)$ with $p^2 = \|\nabla f(x)\|_*^2/2$. Consequently, by \cref{thm:strongLya}, this yields the exponential decay $\mathcal O(e^{-t})$ of the optimality gap $f(x(t)) - f(x^*)$ along the trajectory of the gradient flow \cref{eq:gf}. 

We have more candidates for strong Lyapunov functions besides the optimality gap \cref{eq:Lt-fx}. For example, we can use the squared distance 
$	\mathcal L(x) = \frac{1}{2}\| x - x^* \|^2.$
If $f\in\mathcal S_{\mu,L}^{1}$ with $0<\mu\leqslant L<\infty$, then using \eqref{eq:refineMxstar} implies that 
\[
\begin{split}
	-\nabla \mathcal L(x)\cdot \mathcal G(x)  =\dual{\nabla f(x),x-x^*}
	\geqslant{}&\frac{\mu L}{L+\mu} 
	\| x-x^*\|^2 + \frac{1}{ L+\mu}\| \nabla f(x) \|_*^2\\
	={}&\frac{2\mu L}{L+\mu}\mathcal L(x)
	+ \frac{1}{ L+\mu}\| \nabla f(x) \|_*^2.
\end{split}
\]
Hence $\mathcal  L$ satisfies $\mathcal A(2\mu L/(L+\mu),1,p)$ with $p^2(x)=\nm{\nabla f(x)}_*^2/(L+\mu)$.
The extra positive term $p^2$ is useful for the analysis of the gradient descent method.

Another choice is the combination of the previous two:
\begin{equation}\label{eq:Lt-gd}
	\mathcal L(x) =  f(x) - f(x^*) + \frac{\mu}{2}\|x - x^*\|^2.
\end{equation}
If $f\in\mathcal S_{\mu}^{1}$ with $\mu>0$, then by \eqref{eq:Mxstar}, we have
$$
\dual{\nabla f(x), x - x^*} \geqslant f(x) -f(x^*) + \frac{\mu}{2}\| x - x^* \|^2 = \mathcal L(x),
$$
and this bound verifies the strong Lyapunov property $\mathcal A(\mu, 1, p)$ since 
\begin{equation}\label{eq:Lt-cond-gd}
	-\nabla \mathcal L(x)\cdot \mathcal G(x) = \|\nabla f(x)\|_*^2 + \mu(\nabla f(x), x - x^*) \geqslant \mu \mathcal L(x) + \|\nabla f(x)\|^2.
\end{equation}
The above extra positive term $p^2 = \|\nabla f(x)\|_*^2$ is also useful for the analysis of the gradient descent method.

%

\subsubsection{Convex case}
When $\mu = 0$, the previous strong Lyapunov properties are degenerated. Hence, coercivity of $f$ is needed. 

Define the sub-level set of $f$ on a given constant value $c$ as
$$
S_c(f) = \{x: f(x)\leqslant c\}.
$$
As $f$ is convex, so is $S_c(f)$. The set $\argmin f$ where $f$ attains its minimum value $f_{\min}$ can be written as $S_{f_{\min}}(f)$.

\begin{lem}
	Let $f$ be convex and coercive. For a given finite value $f_0$, there exists a  constant $R_0$ such that 
	\begin{equation}\label{eq:BL}
		\max_{x^* \in \argmin f} \max_{x\in S_{f_0}}\| x - x^* \|  \leqslant R_0. 
	\end{equation} 
\end{lem}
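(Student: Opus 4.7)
The plan is to reduce the claim to the elementary fact that coercivity forces every sublevel set $S_{c}(f)$ to be bounded, and then close the proof via the triangle inequality. The statement implicitly assumes $f_0 \geqslant f_{\min}$ (otherwise $S_{f_0}(f)$ is empty and the inequality is vacuous), so I will work under this assumption.

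First I would show that $S_{f_0}(f)$ is bounded. The argument is by contradiction: if not, there exists a sequence $\{x_n\}\subset S_{f_0}(f)$ with $\nm{x_n}\to\infty$. By the coercivity of $f$, this forces $f(x_n)\to\infty$, which contradicts $f(x_n)\leqslant f_0$ for all $n$. Hence there exists $R<\infty$ such that $\nm{x}\leqslant R$ for every $x\in S_{f_0}(f)$.

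Next I would observe that $\argmin f = S_{f_{\min}}(f) \subseteq S_{f_0}(f)$ since $f_{\min}\leqslant f_0$, so every global minimizer $x^*$ also satisfies $\nm{x^*}\leqslant R$. By Theorem \ref{thm:wellposedS3} the set $\argmin f$ is nonempty, so the maximum is taken over a nonempty set. Then the triangle inequality yields
\[
\nm{x-x^*}\leqslant \nm{x}+\nm{x^*}\leqslant 2R \qquad \forall\, x\in S_{f_0}(f),\ x^*\in\argmin f,
\]
and setting $R_0:=2R$ finishes the argument.

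There is no substantive obstacle; the only point to be careful about is that $R$ genuinely depends on $f_0$ (larger $f_0$ gives a larger sublevel set), which is exactly why $R_0$ is allowed to depend on $f_0$ in the statement. If one wants a slightly sharper bound, one may instead fix any $x^*_0\in\argmin f$ and use $\nm{x-x^*}\leqslant \nm{x-x^*_0}+\nm{x^*-x^*_0}\leqslant 2R$, but this gives the same order of constant and does not change the structure of the proof.
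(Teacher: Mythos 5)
Your argument is correct and matches the paper's proof essentially verbatim: both establish boundedness of $S_{f_0}(f)$ by contradiction with coercivity, both note that $\argmin f\subseteq S_{f_0}(f)$ so minimizers lie in the same bounded set, and both conclude \cref{eq:BL}. Your write-up simply makes explicit the triangle-inequality step and the nonemptiness of $\argmin f$ that the paper leaves implicit.
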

\begin{proof}
	When $f$ is coercive, $S_{f_0}$ is bounded. Otherwise we can find a sequence $\{x_{n}\}$ with $f\left(x_{n}\right) \leqslant f_0$, but $\left\|x_{n}\right\|>n$, for $n=1,2,\ldots,$ contradicting the coercivity of $f$. As $\argmin f\subseteq S_{f_0}$, it is also bounded and \cref{eq:BL} follows.
\end{proof}

%

For $\mathcal L(x) = f(x)-f(x^*)$, where $x^*$ is an arbitrary but fixed point in the minimum set $\argmin f$, as $-\nabla \mathcal L(x)\cdot \mathcal G(x)=\nm{\nabla f(x)}_*^2\geqslant 0$, we conclude that the trajectory of gradient flow $x(t)$ satisfies $x(t)\in S_{f_0}(f)$ with $f_0 = f(x_0)$. Furthermore, assuming coercivity and using the convexity, we have
\begin{equation}\label{eq:upp0}
	f(x)-f(x^*)\leqslant  \dual{\nabla f(x), x - x^*} \leqslant R_0\nm{\nabla f(x)}_*\quad \forall x\in S_{f_0}(f).
\end{equation}
Then it is straightforward to show
\begin{equation}\label{eq:diff-L-gf}
	-\nabla \mathcal L(x)\cdot \mathcal G(x)=
	\nm{\nabla f(x)}_*^2
	\geqslant \frac{1}{R_0^2}\mathcal L^2(x)
	\quad \forall\,x\in S_{f_0}(f).
\end{equation}
Hence, the strong Lyapunov property holds with $q = 2$ and $\mathcal W=S_{f_0}(f)$. Applying  \cref{thm:strongLya} implies the sublinear rate $O(1/t)$ of the optimality gap $f(x(t)) - f(x^*)$ along the trajectory of the gradient flow provided the function is coercive and convex.

\subsection{Proximal point algorithm}
Consider the implicit Euler method for the gradient flow
\begin{equation}\label{eq:im}
	x_{k+1}  =  x_k  - \alpha_k \nabla f(x_{k+1}),
\end{equation}
which can be written as 
\begin{equation}\label{eq:im-ge}
	x_{k+1}  =  \proxi_{\alpha_k f}(x_{k}) :=
	\mathop{\argmin}\limits_x	\left\{
	f(x)+\frac{1}{2\alpha_k}\nm{x-x_k}^2
	\right\},
\end{equation}
and is known as the proximal point algorithm (PPA).

\begin{thm}\label{thm:im-gf}
	Assume $f\in\mathcal S_{\mu}^{1}$ with $\mu>0$. The sequence $\{x_k\}$ generated by \cref{eq:im-ge} satisfies 
	\[
	\mathcal L_{k+1}  \leqslant \frac{1}{1 + \mu \alpha_k} \mathcal L_{k},
	\]
	for all $\alpha_k>0$, where 
	\begin{equation}\label{eq:Lk-PPA}
		\mathcal L_k = \mathcal L(x_k) = f(x_k)-f(x^*)+\frac{\mu}{2}\nm{x_k-x^*}^2.
	\end{equation}
\end{thm}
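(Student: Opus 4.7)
The plan is to execute the three-step paradigm from the introduction, using the implicit Euler structure to land on $\nabla \mathcal L$ evaluated at $x_{k+1}$ (not $x_k$), so that the strong Lyapunov estimate \cref{eq:Lt-cond-gd} can be applied directly at $x_{k+1}$. Since $\mathcal L$ is a sum of convex functions (indeed $2\mu$-strongly convex in $x$ by construction), its Bregman divergence $D_{\mathcal L}(\cdot,\cdot)$ is nonnegative, which will give the built-in negative remainder.

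First I would invoke the backward version of the identity \cref{eq:differenceLya} to write
\[
\mathcal L_{k+1}-\mathcal L_k \;=\; \dual{\nabla \mathcal L(x_{k+1}),\,x_{k+1}-x_k} - D_{\mathcal L}(x_k,x_{k+1}),
\]
and drop $-D_{\mathcal L}(x_k,x_{k+1})\leqslant 0$ (the remainder $\mathcal R_1$ in the paradigm). Then I would substitute the implicit Euler relation $x_{k+1}-x_k = -\alpha_k \nabla f(x_{k+1})$, which is exactly $\alpha_k\mathcal G(x_{k+1})$ for the gradient flow vector field $\mathcal G=-\nabla f$; note that because the scheme is implicit, no lagging remainder $\mathcal R_2$ appears. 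The outcome is the clean inequality
\[
\mathcal L_{k+1}-\mathcal L_k \;\leqslant\; \alpha_k\,\dual{\nabla \mathcal L(x_{k+1}),\,\mathcal G(x_{k+1})}.
\]

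Next I would apply the strong Lyapunov property $\mathcal A(\mu,1,p)$ established in \cref{eq:Lt-cond-gd} for $\mathcal L(x)=f(x)-f(x^*)+\tfrac{\mu}{2}\|x-x^*\|^2$ at the point $x_{k+1}$, namely $-\nabla \mathcal L(x_{k+1})\cdot\mathcal G(x_{k+1})\geqslant \mu\mathcal L_{k+1} + \|\nabla f(x_{k+1})\|_*^2$. Plugging this in and discarding the (nonnegative) $\|\nabla f(x_{k+1})\|_*^2$ contribution yields $\mathcal L_{k+1}-\mathcal L_k\leqslant -\mu\alpha_k\mathcal L_{k+1}$, which rearranges to the claimed $\mathcal L_{k+1}\leqslant (1+\mu\alpha_k)^{-1}\mathcal L_k$. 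This also matches case (2) of \cref{thm:ode-ineq-q>1-dis} with $p_k^2=\alpha_k\|\nabla f(x_{k+1})\|_*^2$.

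There is no real obstacle here: the main conceptual point is the choice of the backward form of \cref{eq:differenceLya}, which is precisely what makes implicit Euler fit the paradigm with zero lagging. The only item to verify in passing is the convexity (so that $D_{\mathcal L}(x_k,x_{k+1})\geqslant 0$) and the validity of \cref{eq:Lt-cond-gd}, both of which are already in place from \cref{sec:gradient}. Note that the result holds for arbitrary step size $\alpha_k>0$, reflecting the unconditional stability of the implicit scheme.
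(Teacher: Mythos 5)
Your proof is correct and takes essentially the same route as the paper: convexity of $\mathcal L$ (via the backward form of \cref{eq:differenceLya} to drop the nonnegative Bregman remainder), substitution of the implicit Euler relation to land on $\alpha_k\dual{\nabla\mathcal L(x_{k+1}),\mathcal G(x_{k+1})}$ with zero lagging, and application of the strong Lyapunov estimate \cref{eq:Lt-cond-gd} at $x_{k+1}$. The paper presents these three steps in a single chained inequality without naming the Bregman term explicitly, but the reasoning is identical.
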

\begin{proof}
	We first use the convexity of $\mathcal L$, then the discretization, and last the strong Lyapunov property \cref{eq:Lt-cond-gd} to get 
	\begin{align*}
		\mathcal L_{k+1} - \mathcal L_{k} 
		&\leqslant(\nabla \mathcal L(x_{k+1}), x_{k+1}-x_k) 
		= \alpha_k(\nabla \mathcal L(x_{k+1}), \mathcal G(x_{k+1})) 
		\leqslant - \mu \alpha_k \mathcal L_{k+1}.
	\end{align*}
	The linear convergence rate then follows.
\end{proof}

\subsection{Gradient descent method}
Next we present analysis for the explicit Euler method for the gradient flow, which is exactly the gradient descent method
\begin{equation}\label{eq:GD}
	x_{k+1}  =  x_k  - \alpha_k \nabla f(x_k).
\end{equation}

\begin{thm}\label{thm:conv-GD}
	Assume $f\in\mathcal S_{\mu,L}^{1,1}$ with $0<\mu\leqslant L<\infty$.  Let $\{x_k\}$ be the sequence generated by  \eqref{eq:GD}. Then for $\alpha_k\leqslant 2/(L+\mu)$, we have
	$$\mathcal L_{k+1} \leqslant(1-\mu\alpha_k) \mathcal L_{k},$$
	where $\mathcal L_k$ is define by \cref{eq:Lk-PPA}. The optimal value $\alpha_k=2/(L+\mu)$ gives 
	$$\mathcal L_{k+1} \leqslant\frac{L-\mu}{L+\mu} \, \mathcal L_{k},$$
	and the quasi-optimal value $\alpha_k = 1/L$ gives
	$$
	\mathcal L_{k+1} \leqslant (1 - \mu/L) \mathcal L_{k}.
	$$
\end{thm}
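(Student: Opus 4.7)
The plan is to execute the three-step paradigm from the introduction using the same Lyapunov function $\mathcal L(x) = f(x) - f(x^*) + \frac{\mu}{2}\|x - x^*\|^2$ as in \cref{thm:im-gf}, whose strong Lyapunov property $\mathcal A(\mu,1,p)$ with $p^2(x) = \|\nabla f(x)\|_*^2$ was already verified in \cref{eq:Lt-cond-gd}. The key difference from the proximal case is that the vector field is now evaluated at the stale point $x_k$, so I would use the first identity in \cref{eq:differenceLya}, namely
\begin{equation*}
\mathcal L_{k+1} - \mathcal L_k = \langle \nabla \mathcal L(x_k), x_{k+1} - x_k\rangle + D_{\mathcal L}(x_{k+1}, x_k),
\end{equation*}
which splits the difference into a directional piece and a positive Bregman remainder $\mathcal R_2 := D_{\mathcal L}(x_{k+1}, x_k)$ coming from the lagging.

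For the directional piece I would substitute $x_{k+1} - x_k = \alpha_k \mathcal G(x_k) = -\alpha_k \nabla f(x_k)$ and invoke the strong Lyapunov bound \cref{eq:Lt-cond-gd} at $x_k$ to get
\begin{equation*}
\langle \nabla \mathcal L(x_k), x_{k+1} - x_k\rangle \leqslant -\mu\alpha_k \mathcal L_k - \alpha_k \|\nabla f(x_k)\|_*^2,
\end{equation*}
so that the extra $-\alpha_k\|\nabla f(x_k)\|_*^2$ is available to cancel $\mathcal R_2$. To control $\mathcal R_2$, I would split it into its $f$-part and the quadratic part, apply \cref{eq:DL} to the former, and obtain
\begin{equation*}
\mathcal R_2 = D_f(x_{k+1}, x_k) + \frac{\mu}{2}\|x_{k+1} - x_k\|^2 \leqslant \frac{L+\mu}{2}\alpha_k^2\|\nabla f(x_k)\|_*^2.
\end{equation*}
Combining the two bounds yields
\begin{equation*}
\mathcal L_{k+1} - \mathcal L_k \leqslant -\mu\alpha_k \mathcal L_k + \alpha_k\!\left(\tfrac{(L+\mu)\alpha_k}{2} - 1\right)\|\nabla f(x_k)\|_*^2,
\end{equation*}
and the hypothesis $\alpha_k \leqslant 2/(L+\mu)$ makes the second term non-positive, giving exactly $\mathcal L_{k+1} \leqslant (1-\mu\alpha_k)\mathcal L_k$. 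The two quoted rates then follow by plugging in $\alpha_k = 2/(L+\mu)$ and $\alpha_k = 1/L$ and simplifying.

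The only genuine subtlety is ensuring that the $-\alpha_k\|\nabla f(x_k)\|_*^2$ term from the \emph{sharper} strong Lyapunov bound \cref{eq:Lt-cond-gd} matches the $\mathcal O(\alpha_k^2 \|\nabla f\|_*^2)$ scale of the lagging remainder $\mathcal R_2$; this is precisely the role of the $p^2$-enhancement in \cref{eq:introLyp-cond}, without which one would have to fall back on Young's inequality and recover only a worse step-size threshold. Beyond this matching, the argument is a direct template instantiation, and no tricky algebraic manipulation is needed.
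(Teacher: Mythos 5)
Your proof is correct and follows essentially the same route as the paper: expand $\mathcal L_{k+1}-\mathcal L_k$ via the first identity in \cref{eq:differenceLya}, bound the Bregman remainder by $\frac{L+\mu}{2}\alpha_k^2\|\nabla f(x_k)\|_*^2$ (the paper does this by noting $\mathcal L\in\mathcal S^1_{2\mu,L+\mu}$ rather than splitting $D_{\mathcal L}$ into its $f$-part and quadratic part, but these are equivalent), and invoke the strong Lyapunov bound \cref{eq:Lt-cond-gd} at $x_k$ to supply the $-\alpha_k\|\nabla f(x_k)\|_*^2$ term that cancels the remainder when $\alpha_k\leqslant 2/(L+\mu)$. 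The remaining step-size substitutions are straightforward, so the argument matches the paper's.
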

\begin{proof}
	As $f\in\mathcal S_{\mu,L}^{1}\subset \mathcal S_{\mu}^{1}$, we have verified the strong Lyapunov property $\mathcal A(c,1,p)$ (cf.  \eqref{eq:Lt-cond-gd}). Note that $\mathcal L\in \mathcal S^{1}_{2\mu, L+\mu}$. Using the identity \cref{eq:differenceLya}, the upper bound of $D_{\mathcal L}$, and the strong Lyapunov condition at $x_k$, we have
	\begin{align*}
		\mathcal L_{k+1} - \mathcal L_{k} 
		&\leqslant(\nabla \mathcal L(x_{k}), x_{k+1}-x_k) + \frac{L+\mu}{2}\| x_{k+1}-x_k \|^2 \\
		& = - \alpha_k(\nabla \mathcal L(x_{k}), \nabla f(x_{k}))  + \frac{L+\mu}{2}\| x_{k+1}-x_k \|^2 \\
		&\leqslant- \mu \, \alpha_k \mathcal L_{k}   - \alpha_k \| \nabla f(x_k) \|_*^2 + \frac{L+\mu}{2}\alpha_k^2 \| \nabla f(x_k) \|_*^2.
	\end{align*}
	Then for $\alpha_k \leqslant 2/(L+\mu)$,
	we will have $\mathcal L_{k+1} - \mathcal L_{k} \leqslant - \mu \, \alpha_k \mathcal L_{k}$ and the linear convergence follows.
\end{proof}

One can also choose
\[
\mathcal L(x) = f(x) - f(x^*) \quad \text{or}\quad \mathcal L(x) =\frac{1}{2}\| x - x^*\|^2,
\]
and prove the linear convergence of the gradient descent methods using the strong Lyapunov property. 
The details is left to the interested readers.

\subsection{Proximal gradient method}\label{sec:algo-analy-comp}
\label{sec:prox}
In some applications, the non-smooth convex function $f$ has the composite structure $f = h+g$, where $h\in \mathcal S^{1}_{\mu,L}$ is smooth and $g$ is convex but nonsmooth. We may also assume that $h\in\mathcal S_{0,L}^{1,1}$ and $g\in \mathcal S^0_{\mu}$ as we can split $h+g$ as $(h(x) - \frac{\mu}{2}\|x\|^2) + (g(x) + \frac{\mu}{2}\|x\|^2)$.
One important example is the least absolute shrinkage and selection operator (LASSO) problem~\citep{Tibshirani1996}
\[
\min_x f(x) := \frac{1}{2}\nm{Ax-b}^2 + \rho\nm{x}_{l^1},
\]
which is also known as basis pursuit in the signal processing context~\citep{chen1999}. Application and generalization of LASSO to a variety of problems can be found in~\citet[Table 1]{Tibshirani1996}. 


We start from the proximal gradient (PG) method which is also known as operator splitting or forward-backward splitting~\citep{parikh_proximal_2014}. Namely we use an explicit scheme for smooth part $h$ and an implicit scheme for non-smooth part $g$:
\begin{equation}\label{eq:pg}
	\frac{x_{k+1} -x_{k} }{\alpha_k} \in-\nabla h(x_k)- \partial g(x_{k+1}),
\end{equation}
which is an implicit-explicit method for the generalized gradient flow
\[
x'\in\mathcal G\mathcal (x,\partial f(x)) :=-\partial f(x)=-\nabla h(x)-\partial g(x).
\]
It can be written using the proximal operator
\begin{equation}\label{eq:im-ge-comp}
	x_{k+1}  =  \proxi_{\alpha_k g}(x_{k}-\alpha_k\nabla h(x_k)),
\end{equation}
and summarized as the following algorithm.

\begin{algorithm}[H]
	\caption{PG method for minimizing $f=h+g,\,h\in\mathcal S_{\mu,L}^1$ with $0\leqslant \mu\leqslant L<\infty$}
	\label{algo:PG}
	\begin{algorithmic}[1] 
		\REQUIRE  $x_0\in V$.
		\FOR{$k=0,1,\ldots$}		
		\STATE Choose $\alpha_k\in (0, 2/L)$.
		\STATE Compute $y_{k} = x_k - \alpha_k\nabla h(x_k)$.
		\STATE Update $x_{k+1}  =  \proxi_{\alpha_k g}(y_{k})$.
		\ENDFOR		
	\end{algorithmic}
\end{algorithm}

We consider the Lyapunov function \cref{eq:Lt-fx}, i.e., $
	\mathcal L(x) = f(x)-f(x^*).$
Let $d(x)\in\partial f(x)$ be an arbitrary direction in the sub-gradient. Assume $f\in \mathcal S_{\mu}^0$ with $\mu > 0$. Thanks to \cref{coro:bd-non}, we have
\begin{equation}\label{eq:dLG}
	-\partial \mathcal L(x,d(x))\cdot \mathcal G(x,d(x)) = \|d(x)\|_*^2 
	\geqslant \mu \mathcal L(x)+\frac{1}{2}\|d(x)\|_*^2,
\end{equation}
which means $\mathcal L$ satisfies strong Lyapunov property $\mathcal A(\mu,1,p)$ with $p = \nm{d(x)}_*^2/2$. When $\mu=0$, assuming additionally $f$ is coercive, then
\begin{equation}\label{eq:local-Lk-mu-0}
	-\partial \mathcal L(x,d(x))\cdot \mathcal G(x,d(x)) 
	\geqslant \frac{1}{2 R_0^2}\mathcal L^2(\bs x)
	+\frac{1}{2}\nm{d(x)}_*^2,
\end{equation}
where $R_0$ is defined in \cref{eq:BL}. That is when $\mu=0$, $\mathcal L$ satisfies strong Lyapunov property $\mathcal A(1/(2R^2),2,p, S_{f_0})$ with $p = \nm{d(x)}_*^2/2$ for arbitrary direction in the sub-gradient $d(x)\in \partial f(x)$.

\begin{lem}\label{lm:pgdecay}
	Assume $f=h+g$ where $h\in\mathcal S_{\mu,L}^1$ with $0\leqslant \mu\leqslant L<\infty$. Let $\{(x_k, y_k)\}$ be the sequence generated by \cref{algo:PG}. Let 
	\begin{align*}
		q_{k+1} &= (y_{k} -x_{k+1})/\alpha_k\in\partial g(x_{k+1}),\\
		d_{k+1} &= \nabla h(x_{k+1}) + q_{k+1} \in \partial f(x_{k+1}),\\
		d_{k+\frac{1}{2}} &= \nabla h(x_{k}) + q_{k+1} = (x_{k} -x_{k+1})/\alpha_k.
	\end{align*}
	Then for $0\leqslant \alpha_k \leqslant 2/L$,
	$$
	f(x_{k+1}) - f(x_k) \leqslant \alpha_k\left ( \frac{L\alpha_k}{2} - 1\right )\min \left \{\|d_{k+1}\|^2_{*}, \|d_{k+\frac{1}{2}}\|^2_{*}\right \}.
	$$
\end{lem}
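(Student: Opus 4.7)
The plan is to prove the two bounds implicit in the minimum—one involving $\|d_{k+1/2}\|_*^2$, one involving $\|d_{k+1}\|_*^2$—independently, by combining a smoothness/convexity estimate on $h$ with the single subgradient inequality $g(x_{k+1})-g(x_k)\leqslant \langle q_{k+1},x_{k+1}-x_k\rangle$ valid for $q_{k+1}\in\partial g(x_{k+1})$. In both branches I will use the identity $x_{k+1}-x_k=-\alpha_k d_{k+1/2}$, which is immediate from $q_{k+1}=(y_k-x_{k+1})/\alpha_k$ together with $y_k=x_k-\alpha_k\nabla h(x_k)$.

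For the $d_{k+1/2}$ branch I will use the descent estimate \cref{eq:DL} for $h\in\mathcal C_L^{1,1}$, namely $h(x_{k+1})-h(x_k)\leqslant \langle\nabla h(x_k),x_{k+1}-x_k\rangle+\tfrac{L}{2}\|x_{k+1}-x_k\|^2$. Adding the subgradient inequality for $g$ merges the gradient terms into $\nabla h(x_k)+q_{k+1}=d_{k+1/2}$, and substituting $x_{k+1}-x_k=-\alpha_k d_{k+1/2}$ immediately yields $f(x_{k+1})-f(x_k)\leqslant -\alpha_k\|d_{k+1/2}\|_*^2+\tfrac{L\alpha_k^2}{2}\|d_{k+1/2}\|_*^2=\alpha_k(L\alpha_k/2-1)\|d_{k+1/2}\|_*^2$.

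For the $d_{k+1}$ branch I will replace the descent bound by the co-coercivity estimate \cref{eq:philowerL} applied to $h\in\mathcal S_{0,L}^1$, namely $h(x_{k+1})-h(x_k)\leqslant \langle\nabla h(x_{k+1}),x_{k+1}-x_k\rangle-\tfrac{1}{2L}\|\nabla h(x_{k+1})-\nabla h(x_k)\|_*^2$. Adding the same subgradient inequality for $g$ and using $d_{k+1}=\nabla h(x_{k+1})+q_{k+1}$ together with the identity $d_{k+1}-d_{k+1/2}=\nabla h(x_{k+1})-\nabla h(x_k)$ gives
\[
f(x_{k+1})-f(x_k)\leqslant -\alpha_k\langle d_{k+1},d_{k+1/2}\rangle-\tfrac{1}{2L}\|d_{k+1}-d_{k+1/2}\|_*^2.
\]
The main obstacle is that $d_{k+1/2}$ still appears on the right and must be eliminated. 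I will complete the square in $d_{k+1/2}$, rewriting the right-hand side as $-\tfrac{1}{2L}\|d_{k+1/2}-(1-L\alpha_k)d_{k+1}\|_*^2+\tfrac{(1-L\alpha_k)^2-1}{2L}\|d_{k+1}\|_*^2$; discarding the non-positive square and simplifying $(1-L\alpha_k)^2-1=L\alpha_k(L\alpha_k-2)$ produces exactly $\alpha_k(L\alpha_k/2-1)\|d_{k+1}\|_*^2$. Taking the minimum of the two bounds finishes the proof. Note that the step-size restriction $\alpha_k\leqslant 2/L$ is not used in either inequality, but only serves to make the factor $(L\alpha_k/2-1)$ non-positive so that the estimate certifies genuine descent.
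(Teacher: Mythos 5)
Your argument is correct, and for the $d_{k+1}$ branch it is essentially the paper's argument in a different guise: the paper also starts from $f(x_{k+1})-f(x_k)\leqslant\langle d_{k+1},x_{k+1}-x_k\rangle-\tfrac{1}{2L}\|\nabla h(x_{k+1})-\nabla h(x_k)\|_*^2$ (via \cref{eq:differenceLya} and \cref{eq:philowerL}), substitutes $x_{k+1}-x_k=-\alpha_k d_{k+1/2}$, writes $-\alpha_k\langle d_{k+1},d_{k+1/2}\rangle=-\alpha_k\|d_{k+1}\|_*^2+\alpha_k\langle d_{k+1},d_{k+1}-d_{k+1/2}\rangle$, and then applies Young's inequality $\alpha_k|\langle d_{k+1},d_{k+1}-d_{k+1/2}\rangle|\leqslant\tfrac{1}{2L}\|d_{k+1}-d_{k+1/2}\|_*^2+\tfrac{L\alpha_k^2}{2}\|d_{k+1}\|_*^2$ to cancel the co-coercivity remainder. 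Your completion of the square produces exactly the same cancellation (Young's inequality \emph{is} a discarded square), so that branch is equivalent in substance though you make the discarded square visible. Where you genuinely deviate is the $d_{k+1/2}$ branch: the paper dismisses it with ``switch to $d_{k+1/2}$ in a similar fashion,'' which (to reach the stated $(L-\mu)$ sharpening in \cref{eq:betterPGestimate}) presumably means re-expanding at $x_k$ and carrying the $\mu$-term; you instead use the Lipschitz descent bound \cref{eq:DL} directly, which is shorter and avoids any cross term at all, at the cost of not recovering the $(L-\mu)$ improvement. That is a perfectly acceptable trade-off because the lemma as stated only asserts the $L$ version. One small presentational caveat: your manipulations (completing the square in $d_{k+1/2}$, and writing $\langle d_{k+1/2},x_{k+1}-x_k\rangle=-\alpha_k\|d_{k+1/2}\|_*^2$) implicitly use the Riesz identification of $V$ with $V^*$ so that the dual pairing is an inner product compatible with $\|\cdot\|_*$; the paper makes the same implicit assumption, so this is not a defect, but it is worth stating if you ever write this out in a setting where $V\neq V^*$.
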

\begin{proof}
	Applying \eqref{eq:differenceLya} and \eqref{eq:philowerL} to $h$ and using the convexity of $g$, we have the bound
	\begin{align*}
		f(x_{k+1}) - f(x_k)  &= h(x_{k+1})-h(x_{k}) + g(x_{k+1})-g(x_{k})\\
		&{}\leqslant \dual{d_{k+1},x_{k+1}-x_k}-\frac{1}{2L}
		\nm{\nabla h(x_{k+1})-\nabla h(x_{k})}_*^2 - \frac{\mu}{2}\| x_{k+1}-x_k\|^2.
	\end{align*}	
	We use the discretization \cref{eq:pg} to replace $x_{k+1} - x_k = -\alpha_k d_{k+\frac{1}{2}}$ and get
	\begin{align*}
		\dual{d_{k+1}, x_{k+1} - x_k} 
		={}& - \alpha_k \dual{d_{k+1}, d_{k+\frac{1}{2}}} 
		={} - \alpha_k \|d_{k+1}\|_*^2 + \alpha_k \dual{d_{k+1}, d_{k+1} - d_{k+\frac{1}{2}}}.
	\end{align*}
	Note that 
	$
	d_{k+1} - d_{k+\frac{1}{2}} = \nabla h(x+1) - \nabla h(x_k).$
	We can control the cross term as
	\begin{align*}
		\alpha_k \snm{\dual{d_{k+1}, d_{k+1} - d_{k+\frac{1}{2}}} } \leqslant 
		\frac{1}{2L}\nm{\nabla h(x_{k+1}) - \nabla h(x_k)}_*^2
		+\frac{L\alpha_k^2}{2}\nm{d_{k+1}}_*^2.
	\end{align*}
	Adding them together, we get the desired inequality with bound $\|d_{k+1}\|_*$. We can switch to $d_{k+\frac{1}{2}}$ in a similar fashion and obtain a slighter better inequality
	\begin{equation}\label{eq:betterPGestimate}
		f(x_{k+1}) - f(x_k) \leqslant \alpha_k\left ( \frac{(L-\mu)\alpha_k}{2} - 1\right ) \|d_{k+\frac{1}{2}}\|^2_{*},
	\end{equation}
	and the range of the step size can be enlarged to $0<\alpha_k \leqslant 2/(L-\mu)$. 	
\end{proof}

The vector $d_{k+\frac{1}{2}}$ is the so-called gradient mapping~\citep[see][]{Nesterov:2013Introductory,luo_chen_differential_2019}. The gradient $d_{k+1}\in \partial f(x_{k+1})$ is useful in the analysis while $d_{k+\frac{1}{2}}$ is practical in the implementation. 

\begin{thm}\label{thm:conv-pg-f}
	Assume $f=h+g$ where $h\in\mathcal S_{\mu,L}^1$ with $0\leqslant \mu\leqslant L<\infty$. When $\mu = 0$, assume further that $f$ is coercive. Let the sequence $\{x_k\}$ be generated by \cref{algo:PG} with fixed step size $\alpha_k = 1/L$. Then  we have
	\begin{equation}\label{eq:conv-Lk}
		\mathcal L_{k+1}-\mathcal L_{k}\leqslant 
		\left\{
		\begin{aligned}
			{}&-\frac{\mu}{L }\mathcal L_{k+1}&&\text{ if } \mu>0,\\
			{}&-\frac{1}{2R_0^2} \mathcal L_{k+1}^2&&\text{ if } \mu=0,
		\end{aligned}
		\right.
	\end{equation}
	where $\mathcal L_k = f(x_k)-f(x^*)$. Consequently, for all $ k\geqslant 0$, it holds that 
	\begin{equation}\label{eq:conv-Lk-sgf-comp}
		\mathcal L_k\leqslant 
		\left\{
		\begin{aligned}
			{}&\mathcal L_0(1+\mu/L)^{-k}&&\text{ if } \mu>0,\\
			{}&\left(1+\frac{\mathcal L_0}{1+ 2R_0^2\mathcal L_0}\right)\frac{\mathcal L_{0}}{2R_0^2 + \mathcal L_{0} k}&&\text{ if } \mu=0.
		\end{aligned}
		\right.
	\end{equation}
\end{thm}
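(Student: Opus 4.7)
The plan is to combine Lemma~\ref{lm:pgdecay} (one‑step descent) with the strong Lyapunov conditions \eqref{eq:dLG} and \eqref{eq:local-Lk-mu-0} verified for $\mathcal L(x)=f(x)-f(x^*)$, and then feed the resulting recursion into the appropriate cases of Theorem~\ref{thm:ode-ineq-q>1-dis}. The three‑step paradigm outlined in the introduction specializes here as: (i) expand via convexity plus the discretization to get the bound in Lemma~\ref{lm:pgdecay}, (ii) identify the negative quantity $-\tfrac{1}{2L}\|d_{k+1}\|_*^2$ produced at the step size $\alpha_k=1/L$, and (iii) bound $\|d_{k+1}\|_*^2$ from below by $\mathcal L_{k+1}$ or $\mathcal L_{k+1}^2$ via the strong Lyapunov condition.

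First I would specialize Lemma~\ref{lm:pgdecay} to $\alpha_k = 1/L$. Since $\alpha_k(L\alpha_k/2-1) = -1/(2L)$, the lemma (with $d_{k+1}\in\partial f(x_{k+1})$) gives
\begin{equation}\label{eq:plan-onestep}
	\mathcal L_{k+1} - \mathcal L_k \;\leqslant\; -\frac{1}{2L}\|d_{k+1}\|_*^2.
\end{equation}
In particular $\{\mathcal L_k\}$ is monotone decreasing, so $f(x_k)\leqslant f(x_0)=f_0$ for every $k$, and thus the whole iterate sequence lies in the sub‑level set $S_{f_0}(f)$. This monotonicity is exactly what I need to legitimately invoke the \emph{local} strong Lyapunov property \eqref{eq:local-Lk-mu-0} in the convex case.

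For $\mu>0$ I would apply the global strong Lyapunov condition \eqref{eq:dLG}, which (after cancelling the term $\tfrac{1}{2}\|d\|_*^2$) yields $\|d_{k+1}\|_*^2 \geqslant 2\mu\,\mathcal L_{k+1}$. Substituting into \eqref{eq:plan-onestep} produces the implicit recursion $\mathcal L_{k+1}-\mathcal L_k \leqslant -(\mu/L)\mathcal L_{k+1}$, which is case (2) of Theorem~\ref{thm:ode-ineq-q>1-dis} with $\alpha_k\equiv\mu/L$ and $p_k\equiv 0$; the resulting factor $\rho_k=(1+\mu/L)^{-k}$ is precisely the linear rate claimed. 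For $\mu=0$, coercivity of $f$ plus the bound \eqref{eq:upp0} specialized at $x_{k+1}\in S_{f_0}(f)$ gives $\|d_{k+1}\|_*^2 \geqslant \mathcal L_{k+1}^2/R_0^2$, so \eqref{eq:plan-onestep} becomes an inequality of the form $\mathcal L_{k+1}-\mathcal L_k \leqslant -c\,\mathcal L_{k+1}^2$ with $c=1/(2R_0^2)$ (up to the $1/L$ factor hidden in the constant $R_0$ of \eqref{eq:BL}). This is exactly case (4) of Theorem~\ref{thm:ode-ineq-q>1-dis}, whose conclusion \eqref{eq:ineq-q>4-dis-} immediately yields the second branch of \eqref{eq:conv-Lk-sgf-comp} with the advertised $\delta$.

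The main obstacle is the $\mu=0$ branch: the strong Lyapunov condition available there is only \emph{local} (on $S_{f_0}(f)$), so the plan hinges on first proving monotonicity of $\mathcal L_k$ from \eqref{eq:plan-onestep} before invoking \eqref{eq:local-Lk-mu-0}. Everything else is routine bookkeeping — the implicit form of the recursion matches case (2) / case (4) of Theorem~\ref{thm:ode-ineq-q>1-dis} exactly because Lemma~\ref{lm:pgdecay} naturally produces bounds involving the \emph{forward} subgradient $d_{k+1}$ rather than the backward one, which is the whole reason why the implicit‑side decay rate (as opposed to the explicit‑side one in cases (1) and (3)) is what appears.
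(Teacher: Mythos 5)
Your proposal reproduces the paper's proof essentially verbatim: specialize Lemma~\ref{lm:pgdecay} at $\alpha_k=1/L$ to get $\mathcal L_{k+1}-\mathcal L_k\leqslant -\tfrac{1}{2L}\|d_{k+1}\|_*^2$, note the resulting monotonicity so that $x_{k+1}\in S_{f_0}(f)$ before invoking the local strong Lyapunov bound in the $\mu=0$ case, feed in \eqref{eq:dLG} (resp.\ \eqref{eq:local-Lk-mu-0}/\eqref{eq:upp0}) at $x_{k+1}$, and close with cases (2) and (4) of Theorem~\ref{thm:ode-ineq-q>1-dis}. You also correctly flag the same $1/L$-factor discrepancy between the one-step recursion you derive and the constant stated in \eqref{eq:conv-Lk}, which the paper's own proof exhibits as well.
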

\begin{proof}
	First of all, we have the relation	$\mathcal L_{k+1} - \mathcal L_k = f(x_{k+1}) - f(x_k)$ and, by Lemma \ref{lm:pgdecay}, the choice $\alpha_k = 1/L$ implies
	$$
	\mathcal L_{k+1} - \mathcal L_k
	\leqslant {} - \frac{1}{2L}\nm{d_{k+1}}_*^2.
	$$
	The strong Lyapunov property at $x_{k+1}$ reads as
	$$
	\begin{cases}
		\displaystyle \frac{1}{2\mu}\| d_{k+1}\|_*^2 \geqslant f(x_{k+1}) - f(x^*) & \mu > 0,\\
		R_0 \|d_{k+1}\|_* \geqslant f(x_{k+1}) - f(x^*) & \mu = 0,
	\end{cases}
	$$
	which can be proved similarly to \cref{eq:dLG,eq:local-Lk-mu-0}.
	
	For $\mu > 0$, we then have
	\[
	\begin{split}
		\mathcal L_{k+1} - \mathcal L_k
		\leqslant {} - \frac{1}{2L}\nm{d_{k+1}}_*^2\leqslant -\frac{\mu}{L} \mathcal L_{k+1}.
	\end{split}
	\]
	The desired result \cref{eq:conv-Lk} then follows. 
	
	When $\mu = 0$, we first conclude $\mathcal L_{k+1} - \mathcal L_k\leqslant 0$ for all $k\geqslant 0$. That is $f(x_k) \leqslant f(x_0)$ for all $k\geqslant 0$ so that we can use bound \cref{eq:BL} and the strong Lyapunov property implies
	\[
	\begin{split}
		\mathcal L_{k+1} - \mathcal L_k
		\leqslant {} - \frac{1}{2L}\nm{d_{k+1}}_*^2 \leqslant -\frac{1}{2LR_0^2} \mathcal L_{k+1}^2,
	\end{split}
	\]
	which proves \cref{eq:conv-Lk} for $\mu=0$. The sub-linear rate \eqref{eq:conv-Lk-sgf-comp} follows from Theorem \ref{thm:ode-ineq-q>1-dis}.
\end{proof}

\section{Gradient Flows with Time Scaling}
\label{sec:Scale-GD-Prox}
In this section we shall introduce a rescaled gradient flow for $f\in\mathcal S_\mu^1$ and deal with the strongly convex case $\mu > 0$ and convex case $\mu = 0$ in a unified way. 
\subsection{Scaled gradient flows}
Consider the following first-order ODE system
\begin{equation}\label{eq:s-gf}
	\left\{
	\begin{aligned}
		x'={}&-\nabla f(x)/\gamma,\\
		\gamma'={}&\mu-\gamma.
	\end{aligned}
	\right.
\end{equation}
with arbitrary initial value $x(0)=x_0$ and $\gamma(0)=\gamma_0 > 0$.
Let $\bs x=(x,\gamma)$ and write \cref{eq:s-gf} as
\[
\bs x'(t) = \mathcal G(\bs x(t)).
\]

We introduce a Lyapunov function
\begin{equation}\label{eq:Lt-rs-gd}
	\mathcal L(\bs x):=f(x)-f(x^*)+\frac{\gamma}{2}\nm{x-x^*}^2.
\end{equation}
Let us verify the strong Lyapunov property as follows
\begin{align}
	-\nabla \mathcal L(\bs x)\cdot \mathcal G(\bs x) = {}&
	\dual{\nabla f(x),x-x^*}+
	\frac{\gamma-\mu}{2}\nm{x-x^*}^2
	+\frac{1}{\gamma}\nm{\nabla f(x)}_*^2\notag\\
	\geqslant{}&f(x)-f(x^*)+\frac{\gamma}{2}\nm{x-x^*}^2+\frac{1}{\gamma}\nm{\nabla f(x)}_*^2\notag\\
	={}&\mathcal L(\bs x)+\frac{1}{\gamma}\nm{\nabla f(x)}_*^2.
	\label{eq:A-re-gd}
\end{align}
Hence $\mathcal L$ is a strong Lyapunov function of $\mathcal G$ with $q= 1,\,c(\bs x)=1$ and $p^2(\bs x)=\nm{\nabla f(x)}_*^2/ \gamma$. By \cref{thm:strongLya}, we have 
\begin{equation}\label{eq:conv-sc-gf}
	\mathcal L(x(t))\leqslant e^{-t}\mathcal L(0),
	\quad\,t\geqslant 0.
\end{equation}
Note that even for $\mu = 0$, we can still achieve the exponential decay, i.e., the linear convergence rate rather than the sub-linear rate. 

In the continuous level, exponential decay can 
also be viewed as nothing but a time rescaling.
Introducing the 
parameter $\gamma$ governed by the equation 
$\gamma'=\mu-\gamma$ brings the effect of rescaling 
and allows us to handle $\mu\geqslant 0$ in a unified way.

\subsection{Scaled proximal point algorithm}
\label{sec:scale-prox}
Convergence analysis of the implicit Euler methods for smooth or non-smooth convex functions are almost identical. Therefore in the following we present the non-smooth case only, i.e., $f\in\mathcal S_\mu^0$ with $\mu\geqslant 0$.

Given any time step size $\alpha_k>0$, the implicit Euler method reads as
\begin{equation}\label{implicitEuler-gf-s-non}
	\left\{
	\begin{split}
		\frac{		x_{k+1}-x_{k}}{\alpha_k} \in {}&\mathcal G^x(x_{k+1},\gamma_{k},\partial f(x_{k+1})) := - \frac{1}{\gamma_{k}} \partial f(x_{k+1})\\\frac{		\gamma_{k+1}-\gamma_{k} }{\alpha_k}= {}&
		\mathcal G^\gamma(x_k,\gamma_{k+1}) := \mu - \gamma_{k+1}.
	\end{split}
	\right.
\end{equation}
 Denoted by $t_k=\alpha_k/\gamma_{k},$ the update of $x_{k+1}$ in \eqref{implicitEuler-gf-s-non} can be written using the proximal operator
\begin{equation}\label{eq:scale-prox}
	x_{k+1}  =  \proxi_{t_k f}(x_{k}) :=
	\mathop{\argmin}\limits_x	\left\{
	f(x)+\frac{1}{2t_k}\nm{x-x_k}^2
	\right\}.
\end{equation}
%


We still use the Lyapunov function \cref{eq:Lt-rs-gd} and the strong Lyapunov condition: for any $d(x)\in \partial f(x)$ 
\begin{equation}\label{eq:nonsmoothLya}
	-\partial \mathcal L(\bs x, d(x))\cdot 
	\mathcal G(\bs x, d(x)) 
	\geqslant{}\mathcal L(\bs x)+\frac{1}{\gamma}\nm{d(x)}_*^2.
\end{equation}
can be verified similarly to \eqref{eq:A-re-gd}.

To study the change of discrete Lyapunov function
\[
\mathcal L_k = f(x_k)-f(x^*)+\frac{\gamma_k}{2}\nm{x_k-x^*}^2,
\]
we shall move along the path $(x_k, \gamma_k) \longrightarrow (x_{k+1}, \gamma_k) \longrightarrow (x_{k+1}, \gamma_{k+1})$. To be able to use the strong Lyapunov property, we will try to evaluate the vector field $\mathcal G$ at $(x_{k+1}, \gamma_{k+1})$.  In \cref{eq:scale-prox}, only the step size $t_k=\alpha_k/\gamma_{k}$ enters the algorithm and $(\alpha_k,\gamma_k)$ can be solved in terms of $t_k$. Therefore in \eqref{eq:Lkdecay} we estimate the rate by $t_k$. 

\begin{thm}\label{thm:im-sgf-non}
	Assume $f$ is in $\mathcal S_{\mu}^{0}$ with $\mu\geqslant 0$. Then for \cref{implicitEuler-gf-s-non} with any $\alpha_k>0$, we have
	\begin{equation*}
		\mathcal L_{k+1} \leqslant \frac{1}{1+\alpha_k} \, \mathcal L_{k}.
	\end{equation*}
	Consequently for any $\gamma_0 > 0$,
	\begin{equation}\label{eq:Lkdecay}
		\mathcal L_k \leqslant \frac{ \mathcal L_0}{1 + \gamma_0\left [\prod_{i=0}^{k-1} (1+t_i\mu) - 1\right ]/\mu},
	\end{equation}
	where $t_k=\alpha_k/\gamma_{k}$ is the rescaled step size and for $\mu=0$ we used the convention \cref{eq:conven}.
\end{thm}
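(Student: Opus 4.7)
The plan is to follow the three-step paradigm advertised in the introduction: decompose $\mathcal L_{k+1}-\mathcal L_k$ via subgradient convexity, substitute the implicit Euler update, and invoke the strong Lyapunov condition \eqref{eq:nonsmoothLya} at $\bs x_{k+1}$. First I would split
\[
\mathcal L_{k+1}-\mathcal L_k = \bigl[f(x_{k+1})-f(x_k)\bigr] + \frac{\gamma_{k+1}}{2}\nm{x_{k+1}-x^*}^2 - \frac{\gamma_k}{2}\nm{x_k-x^*}^2.
\]
For the first bracket I would apply \eqref{eq:nonsmoothconvex} with $d_{k+1}\in\partial f(x_{k+1})$, and for the quadratic piece use the identity $\nm{x_{k+1}-x^*}^2-\nm{x_k-x^*}^2 = 2\dual{x_{k+1}-x^*,x_{k+1}-x_k}-\nm{x_{k+1}-x_k}^2$ together with $\gamma_{k+1}-\gamma_k=\alpha_k(\mu-\gamma_{k+1})$. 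After organizing, the bound takes the form
\[
\mathcal L_{k+1}-\mathcal L_k \leqslant \dual{d_{k+1},x_{k+1}-x_k} + \gamma_k\dual{x_{k+1}-x^*,x_{k+1}-x_k} + \tfrac{\alpha_k(\mu-\gamma_{k+1})}{2}\nm{x_{k+1}-x^*}^2 - \tfrac{\gamma_k+\mu}{2}\nm{x_{k+1}-x_k}^2.
\]

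Next I would substitute the discretization \eqref{implicitEuler-gf-s-non}, namely $\gamma_k(x_{k+1}-x_k) = -\alpha_k d_{k+1}$. This converts $\gamma_k\dual{x_{k+1}-x^*,x_{k+1}-x_k}$ into $-\alpha_k\dual{d_{k+1},x_{k+1}-x^*}$, which I bound using \cref{coro:bd-non}:
\[
-\alpha_k\dual{d_{k+1},x_{k+1}-x^*}\leqslant -\alpha_k\bigl[f(x_{k+1})-f(x^*)\bigr] - \tfrac{\alpha_k\mu}{2}\nm{x_{k+1}-x^*}^2.
\]
Combining this with the $\alpha_k(\mu-\gamma_{k+1})/2$ coefficient collapses the $\nm{x_{k+1}-x^*}^2$ terms into $-\tfrac{\alpha_k\gamma_{k+1}}{2}\nm{x_{k+1}-x^*}^2$, and together with $-\alpha_k[f(x_{k+1})-f(x^*)]$ one exactly reconstructs $-\alpha_k\mathcal L_{k+1}$. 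The remaining terms $-(\alpha_k/\gamma_k)\nm{d_{k+1}}_*^2-(\gamma_k+\mu)\nm{x_{k+1}-x_k}^2/2$ are nonpositive, so
\[
(1+\alpha_k)\mathcal L_{k+1} \leqslant \mathcal L_k,
\]
which is the desired one-step contraction.

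Finally I would iterate to get $\mathcal L_k \leqslant \mathcal L_0\,\rho_k$ with $\rho_k=\prod_{i=0}^{k-1}(1+\alpha_i)^{-1}$, and then rewrite $\rho_k$ via the defining recursion. Since $(1+\alpha_i)\gamma_{i+1}=\gamma_i+\mu\alpha_i=\gamma_i(1+\mu t_i)$, the product telescopes into
\[
\rho_k = \frac{\gamma_k}{\gamma_0}\prod_{i=0}^{k-1}\frac{1}{1+\mu t_i},
\]
and substituting the explicit expression \eqref{eq:gammak} for $\gamma_k$ from \cref{lem:decay} makes the $\prod(1+\mu t_i)$ factors cancel, leaving exactly \eqref{eq:Lkdecay} (with the convention \eqref{eq:conven} handling $\mu=0$). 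The main obstacle I anticipate is the bookkeeping in the first step: one must simultaneously exploit the strong-convexity correction $-\frac{\mu}{2}\nm{x_{k+1}-x_k}^2$ coming from $f$ and the Bregman-type remainder from the $\gamma$-weighted quadratic, so that after substituting the implicit update the coefficients of $\nm{x_{k+1}-x^*}^2$ align to produce the factor $\gamma_{k+1}$ appearing in $\mathcal L_{k+1}$, rather than $\gamma_k$.
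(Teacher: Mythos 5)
Your argument is correct and, in substance, identical to the paper's: both bound $\mathcal L_{k+1}-\mathcal L_k$ using the $\mu$-strong convexity of $f$ and the $\gamma$-update, substitute $\gamma_k(x_{k+1}-x_k)=-\alpha_k d_{k+1}$, and invoke $\dual{d_{k+1},x_{k+1}-x^*}\geqslant f(x_{k+1})-f(x^*)+\tfrac{\mu}{2}\nm{x_{k+1}-x^*}^2$ from \cref{coro:bd-non}, which is precisely the content of the strong Lyapunov condition \eqref{eq:nonsmoothLya}. The only difference is organizational (the paper routes the estimate through the intermediate point $(x_{k+1},\gamma_k)$ and quotes \eqref{eq:nonsmoothLya} as a black box, while you unfold it inline), and the telescoping via \eqref{eq:gammak} in the second half is the same.
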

\begin{proof}
	First of all, the direction $d_{k+1} = \frac{x_k-x_{k+1}}{t_k}\in\partial f(x_{k+1}).$
	We split the difference as
	\begin{align*}
		\mathcal L_{k+1}- \mathcal L_k={}&
		\mathcal L(x_{k+1},\gamma_{k}) - \mathcal L(x_{k},\gamma_{k})\\
		&+\mathcal L(x_{k+1},\gamma_{k+1}) - \mathcal L(x_{k+1},\gamma_{k}) 
		:={} I_1 + I_2.		
	\end{align*}
	As $\mathcal L$ is linear in $\gamma$ and $\mathcal G^{\gamma}$ is independent of $(x,\gamma)$
	\[
	I_2 = {}\dual{\nabla_{\gamma}\mathcal L( \bs x_{k+1}), \gamma_{k+1} - \gamma_{k}} ={} \alpha_k (\nabla_{\gamma}\mathcal L
	( \bs x_{k+1}), 
	\mathcal G^{\gamma}( \bs x_{k+1},d_{k+1})).
	\]
	As for fixed $\gamma>0$, the function 
	$\mathcal L(\cdot,\gamma)\in \mathcal S_{\gamma+\mu}^{0}$, 
	we obtain that 
	\begin{align*}
		I_{1} \leqslant 	{}&\dual{\partial_x\mathcal L(d_{k+1},\gamma_k),x_{k+1}-x_k}-\frac{\mu+\gamma_k}{2}\nm{x_{k+1}-x_k}^2\\
		={}&\alpha_k\dual{\partial_{x}\mathcal L(d_{k+1},\gamma_{k+1}), \mathcal G^x(d_{k+1},\gamma_{k+1})}
		-\frac{\mu+\gamma_k}{2}\nm{x_{k+1}-x_k}^2	\notag\\
		{}&\quad+\alpha_k\left(\frac{1}{\gamma_{k+1}}-\frac{1}{\gamma_{k}}\right)\nm{d_{k+1}}_*^2\notag\\
		\leqslant{}&\alpha_k\dual{\partial_{x}\mathcal L(d_{k+1},\gamma_{k+1}), \mathcal G^x(d_{k+1},\gamma_{k+1})}
		+		\frac{\alpha_k}{\gamma_{k+1}}
		\nm{d_{k+1}}_*^2.
	\end{align*}
	%
	Adding all together and using the strong Lyapunov condition \eqref{eq:nonsmoothLya}, we get 
	\[
	\begin{split}
		\mathcal L_{k+1}- \mathcal L_k
		\leqslant {}&\alpha_k\dual{ \partial \mathcal L(d_{k+1},\gamma_{k+1}), \mathcal G(d_{k+1},\gamma_{k+1})}		+		\frac{\alpha_k}{\gamma_{k+1}}
		\nm{d_{k+1}}_*^2
		\leqslant {}- \alpha_k \mathcal L_{k+1}.
	\end{split}
	\]
	
	By recursion, we have $$\mathcal L_k \leqslant \frac{\mathcal L_0}{\prod_{i=0}^{k-1}(1+\alpha_i)} = \mathcal L_0 \prod_{i=0}^{k-1} \frac{\gamma_{i+1}}{(1+t_i\mu) \gamma_i} = \frac{\mathcal L_0}{\prod_{i=0}^{k-1}(1+t_i \mu )} \frac{\gamma_k}{\gamma_0},$$
	and \eqref{eq:Lkdecay} follows from the identity \eqref{eq:gammak} on $\gamma_k$.	
\end{proof}

For explicit methods, sub-linear rate is expected for $\mu = 0$ (see \cref{thm:conv-pg-f}). The implicit method, however, retains the linear rate uniformly for all $\mu \geqslant 0$. The larger is step size $t_k$, the better is the convergence rate. On the other hand, uniform bound $\alpha_k \geqslant \alpha > 0$ implies the exponential increasing of $t_k$ and the proximal operator is harder to evaluate. In the limiting case $t_k = \infty$, it goes back to the original optimization problem.

\section{Heavy Ball Flow and Momentum Methods}
\label{sec:HB-Momentum}
The well-known heavy ball (HB) flow system~\citep{polyak_methods_1964} reads as follows
\begin{equation}\label{eq:hb}
	x''+\gamma \, x'+\beta\nabla f(x) = 0,\quad t>0,
\end{equation}
where $\gamma,\,\beta>0$ are constant parameters and initial conditions are given by $x(0)=x_0,\,x'(0)=x_1$. Discretization of \eqref{eq:hb} leads to  the so-called heavy ball method (a.k.a the momentum method). In this section we shall study the momentum method using the strong Lyapunov functions for the strongly convex case $f\in\mathcal S_{\mu,L}^1$ with $0<\mu\leqslant L<\infty$.

\subsection{Literature review}
In the early 1960s,~\citet{polyak_methods_1964} considered the HB model \cref{eq:hb} together with its discretization
\begin{equation}\label{eq:hbm}
	x_{k+1} = x_k-s_k\nabla f(x_k) + \alpha_k(x_k-x_{k-1}),
\end{equation}
which covers a large class of iterative solvers for solving linear algebraic systems. 
Applying the asymptotic bound between the matrix norm and the spectral radius,~\citet[Theorem 9]{polyak_methods_1964} established the local convergence result for \cref{eq:hb,eq:hbm} (with stronger smoothness condition $f\in\mathcal C^2$) via spectral analysis and obtained the minimum spectral radius
\[
\rho^*=\rho(\alpha^*,s^*) = \frac{\sqrt{L}-\sqrt{\mu}}{\sqrt{L}+\sqrt{\mu}},
\]
where
\[
\alpha^* =\left(\frac{\sqrt{L}-\sqrt{\mu}}{\sqrt{L}+\sqrt{\mu}}\right)^2 ,\quad s^* = \frac{4}{(\sqrt{L}+\sqrt{\mu})^2}.
\]

However, it was shown in~\citet{lessard_analysis_2016} that the HB method with parameters optimized for linear ODEs does not guarantee the global convergence for general nonlinear objective function, which justifies the limitation of spectral analysis and the need of Lyapunov analysis. 

Recently,~\citet{ghadimi_global_2015} established 
the first global ergodic convergence rate 
\begin{equation}\label{eq:erg}
	f(\widetilde{x}_k)-f(x^*)\leqslant \left\{
	\begin{aligned}
		&\frac{C}{k+1},&&\mu=0,\\
		&C\left(1-\frac{\mu}{L}\right)^k,&&\mu>0,
	\end{aligned}
	\right.
\end{equation}
for the HB method \cref{eq:hbm}, where $\widetilde x_k = \frac{1}{k+1}\sum_{i=0}^{k}x_i$ denotes the Cesa\'{e}ro average. Later on,
~\citet{sun_non-ergodic_2018} considered $\beta=1,\,\gamma>0$ and the Lyapunov function
\begin{equation}\label{eq:hb-sys-mu=0-Lt}
	\mathcal L(\bs x):=f(x)-f(x^*)+\frac{1}{2}\nm{v}^2,
\end{equation}
which has a nice physical meaning. Indeed, if we treat $x(t)$ as the trajectory of a particle  and understand $f(x)-f(x^*)$ as the potential energy, then $v = x'$ is the velocity and $\frac{1}{2}\nm{v}^2$ is the kinetic energy. Therefore $\mathcal L$ defined in \cref{eq:hb-sys-mu=0-Lt} is the total energy consisting of summation of the potential energy and the kinetic energy. 

Hence, instead of reducing the potential energy only in the gradient flow, the HB flow \cref{eq:hb} reduces the total energy, and an easy calculation yields the global Lyapunov property
$-\nabla \mathcal L(\bs x)\cdot \mathcal G(\bs x) = \gamma\nm{v}^2.$
Unfortunately, we only have the boundedness
$	\mathcal L(\bs x(t))\leqslant \mathcal L(\bs x_0), 0<t<\infty,$
due to the absence of strong Lyapunov property. 
To get the global convergence rate $O(1/t)$, further assumptions (such as coercivity of $f$) are needed which may not be easy to verify. Moreover, ~\citet{sun_non-ergodic_2018} improved the ergodic rate \cref{eq:erg} to non-ergodic sense.

Another choice
$\gamma=2\sqrt{\mu},\,\beta=1$ has been considered in~\citet{Siegel:2019,Wilson:2021}. This is nothing but a rescaling of the time variable $t=\sqrt{\mu}\tau$ for \cref{eq:hb-sys-mu>0} and thus  linear convergence $O(e^{-\sqrt{\mu}\tau})$  can be established under the new time variable $\tau>0$. In~\citet{Siegel:2019,Wilson:2021}, several methods for the HB system \cref{eq:hb-sys-mu>0} with provable accelerated convergence rate $(1-\sqrt{\mu/L})^k$ have been presented. 

We also note that~\citet{shi_understanding_2018} considered $\gamma=2\sqrt{\mu}$ and $\beta = 1+\sqrt{\mu s}$ with $s>0$ and require $f\in\mathcal S_{\mu,L}^{1}\cap\mathcal C^2$. They introduced another Lyapunov function
\[
\mathcal L(\bs x) = f(x(t))-f(x^*) + \frac{1}{4\beta}\nm{x'(t)}^2+\frac{\gamma^2}{4\beta}\nm{ x(t)   + \frac{x'(t)}{\gamma} -x^*}^2,
\]
and obtained the exponential decay 
\[
\mathcal L(x(t))\leqslant e^{-\sqrt{\mu}t/4}\mathcal L(0).
\]
Besides, they also established the linear rate $O(1-\mu/L)^k$ for the corresponding discretization. As we shall show below, our analysis based on strong Lyapunov conditions is simpler.

\subsection{HB model with suitable parameters}
In our recent work \citep{luo_chen_differential_2019}, we considered 
$\gamma=2$ and $\beta=1/\mu$. 
In this case, \cref{eq:hb} is equivalent to a first-order system
\begin{equation}\label{eq:hb-sys-mu>0}
	\left\{
	\begin{aligned}
		x'={}& v-x,\\
		v'={}& x-v-\nabla f(x)/\mu.
	\end{aligned}
	\right.
\end{equation}
Let $\bs x=(x,v)$ and let the above right hand side be $\mathcal G(\bs x)$.
The HB system \cref{eq:hb-sys-mu>0} can be abbreviated as $\bs x'(t) = \mathcal G(\bs x(t))$.
We choose the Lyapunov function 
\begin{equation}\label{eq:L-HB}
	\mathcal L(\bs x):=f(x)-f(x^*)+\frac{\mu}{2}\nm{v-x^*}^2.
\end{equation}
Besides, we present the following identity which is trivial but 
very useful for our analysis in both the continuous and discrete level
\begin{equation}\label{eq:squares}
	2(u-v,v-w) = \nm{u-w}^2-\nm{u-v}^2-\nm{v-w}^2\quad \forall\,u,v,w\in V.
\end{equation}


We now verify the strong Lyapunov property of \cref{eq:L-HB} as follows. A direct computation leads to
\[
\begin{aligned}
	{}&-\nabla \mathcal L(\bs x)\cdot \mathcal G(\bs x) = 
\dual{\nabla f(x),x-x^*}-\mu(x-v,v-x^*)\notag\\
={}&\dual{\nabla f(x),x-x^*}-\frac{\mu}{2}
\left(\nm{x-x^*}^2-\nm{x-v}^2-\nm{v-x^*}^2\right).
\end{aligned}
\]
Recalling \cref{eq:Mxstar} and our assumption that $f\in\mathcal S_{\mu,L}^1$ with $0<\mu\leqslant L<\infty$, this implies 
\begin{equation}	\label{eq:A-hb}
		-\nabla \mathcal L(\bs x)\cdot \mathcal G(\bs x) 
	\geqslant{} f(x)-f(x^*)+\frac{\mu}{2}\nm{v-x^*}^2 +
	\frac{\mu}{2}\nm{x-v}^2
	={}\mathcal L(\bs x)
	+\frac{\mu}{2}\nm{x-v}^2.
\end{equation}
Consequently $\mathcal L$ is a strong Lyapunov function with $q= 1,\,c(\bs x)=1$ and $p^2(\bs x)=\mu\nm{x-v}^2/2$. By \cref{thm:strongLya}, it follows that $$\mathcal L(x(t))\leqslant 
e^{-t}\mathcal L(0),\quad t\geqslant 0.$$
\subsection{A semi-implicit discretization}
Given $\alpha_k>0$, we consider a semi-implicit scheme for solving the HB system \cref{eq:hb-sys-mu>0}:
\begin{equation}\label{eq:hbGS}
	\left\{
	\begin{split}
		x_{k+1} ={}&x_k+\alpha_k (v_{k} - x_{k+1}),\\
		v_{k+1} ={}&v_k+ \alpha_k (x_{k+1} - v_{k+1}) - \frac{\alpha_k}{\mu} \nabla f(x_{k+1}),
	\end{split}
	\right.
\end{equation}
which is a Gauss-Seidel type iteration.
We first treat $v$ is known as $v_k$ and solve the first equation to get $x_{k+1}$ and then with known $x_{k+1}$ to solve the second equation to update $v_{k+1}$.  A discrete analogue to \cref{eq:L-HB} is 
\begin{equation}\label{eq:Lk-HB}
\mathcal L_k:=	\mathcal L(\bs x_k):=f(x_k)-f(x^*)+\frac{\mu}{2}\nm{v_k-x^*}^2,
\end{equation}
where $\bm x_k = (x_k,v_k)$.

\begin{lem}\label{lem:explicitEuler-agf-s}
Assume $f\in\mathcal S_{\mu,L}^1$ with $0<\mu\leqslant L<\infty$. Then for the scheme \cref{eq:hbGS} with any $\alpha_k>0$, we have
	\begin{equation}\label{eq:conv-explicitEuler-agf-s}
		\mathcal L_{k+1} - \mathcal L_{k} \leqslant -\alpha_k \mathcal L_{k+1}+\frac{\alpha_k^2}{2\mu} \nm{\nabla f(x_{k+1})}_*^2.
	\end{equation}
\end{lem}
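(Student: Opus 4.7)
I will follow the three–step paradigm outlined in the introduction: expand using convexity of $\mathcal L$, substitute the discretization to obtain $\alpha_k\nabla\mathcal L(\bs x_{k+1})\cdot\mathcal G(\bs x_{k+1})$ plus a lagging remainder, then apply the strong Lyapunov property \eqref{eq:A-hb} at $\bs x_{k+1}$ and verify that the negative terms $-D_{\mathcal L}(\bs x_k,\bs x_{k+1})$ and $-\tfrac{\mu}{2}\|x_{k+1}-v_{k+1}\|^2$ absorb the lagging error.

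\emph{Step 1 (convexity expansion).} Since $\mathcal L(\bs x)=f(x)-f(x^*)+\tfrac{\mu}{2}\|v-x^*\|^2$ is convex in $\bs x=(x,v)$, the second identity in \eqref{eq:differenceLya} gives
\[
\mathcal L_{k+1}-\mathcal L_k = \dual{\nabla\mathcal L(\bs x_{k+1}),\bs x_{k+1}-\bs x_k} - D_{\mathcal L}(\bs x_k,\bs x_{k+1}).
\]
A direct computation shows $D_{\mathcal L}(\bs x_k,\bs x_{k+1}) = D_f(x_k,x_{k+1}) + \tfrac{\mu}{2}\|v_k-v_{k+1}\|^2$, so by convexity of $f$ the remainder is bounded below by $\tfrac{\mu}{2}\|v_k-v_{k+1}\|^2$.

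\emph{Step 2 (lagging of the scheme).} Writing out $\nabla\mathcal L(\bs x_{k+1})=(\nabla f(x_{k+1}),\mu(v_{k+1}-x^*))$ and using the scheme \eqref{eq:hbGS} to express $x_{k+1}-x_k=\alpha_k(v_k-x_{k+1})$ and $v_{k+1}-v_k=\alpha_k(x_{k+1}-v_{k+1})-\tfrac{\alpha_k}{\mu}\nabla f(x_{k+1})$, I can compare with $\alpha_k\nabla\mathcal L(\bs x_{k+1})\cdot\mathcal G(\bs x_{k+1})$ (which uses $v_{k+1}-x_{k+1}$ in place of $v_k-x_{k+1}$ in the first equation). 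The difference equals $\alpha_k\dual{\nabla f(x_{k+1}),v_k-v_{k+1}}$, which after substituting the second discretization equation becomes
\[
\dual{\nabla\mathcal L(\bs x_{k+1}),\bs x_{k+1}-\bs x_k} = \alpha_k\,\nabla\mathcal L(\bs x_{k+1})\cdot\mathcal G(\bs x_{k+1}) - \alpha_k^2\dual{\nabla f(x_{k+1}),x_{k+1}-v_{k+1}} + \frac{\alpha_k^2}{\mu}\|\nabla f(x_{k+1})\|_*^2.
\]

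\emph{Step 3 (apply strong Lyapunov and cancel).} The strong Lyapunov inequality \eqref{eq:A-hb} at $\bs x_{k+1}$ yields $\alpha_k\nabla\mathcal L(\bs x_{k+1})\cdot\mathcal G(\bs x_{k+1})\leqslant -\alpha_k\mathcal L_{k+1} - \tfrac{\mu\alpha_k}{2}\|x_{k+1}-v_{k+1}\|^2$. Combining with Steps~1–2,
\[
\mathcal L_{k+1}-\mathcal L_k \leqslant -\alpha_k\mathcal L_{k+1} - \frac{\mu\alpha_k}{2}\|x_{k+1}-v_{k+1}\|^2 - \alpha_k^2\dual{\nabla f(x_{k+1}),x_{k+1}-v_{k+1}} + \frac{\alpha_k^2}{\mu}\|\nabla f(x_{k+1})\|_*^2 - \frac{\mu}{2}\|v_k-v_{k+1}\|^2.
\]
The main obstacle is the mixed term $-\alpha_k^2\dual{\nabla f(x_{k+1}),x_{k+1}-v_{k+1}}$; I resolve it by expanding $\|v_k-v_{k+1}\|^2$ from the scheme, which gives
\[
\frac{\mu}{2}\|v_k-v_{k+1}\|^2 = \frac{\mu\alpha_k^2}{2}\|x_{k+1}-v_{k+1}\|^2 - \alpha_k^2\dual{\nabla f(x_{k+1}),x_{k+1}-v_{k+1}} + \frac{\alpha_k^2}{2\mu}\|\nabla f(x_{k+1})\|_*^2.
\]
Substituting this identity in, the cross term cancels exactly and the $\|\nabla f\|_*^2$ coefficient collapses from $\alpha_k^2/\mu$ to $\alpha_k^2/(2\mu)$, leaving an additional nonpositive $-\tfrac{\mu\alpha_k(1+\alpha_k)}{2}\|x_{k+1}-v_{k+1}\|^2$ term which I simply discard to obtain \eqref{eq:conv-explicitEuler-agf-s}.
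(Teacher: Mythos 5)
Your proof is correct and follows the same overall structure as the paper's: expand the difference via convexity, identify the lagging term $\alpha_k\dual{\nabla f(x_{k+1}),v_k-v_{k+1}}$ coming from evaluating $\mathcal G^x$ at $v_k$ rather than $v_{k+1}$, then apply the strong Lyapunov condition \eqref{eq:A-hb} at $\bs x_{k+1}$. The one genuine difference is how you absorb the lagging term: the paper applies Cauchy--Schwarz and Young's inequality to $\alpha_k\dual{\nabla f(x_{k+1}),v_k-v_{k+1}}$, pairing the resulting $\tfrac{\mu}{2}\|v_k-v_{k+1}\|^2$ against the Bregman remainder in $v$, whereas you first substitute the second discretization equation to turn the lagging term into $-\alpha_k^2\dual{\nabla f(x_{k+1}),x_{k+1}-v_{k+1}}+\tfrac{\alpha_k^2}{\mu}\|\nabla f(x_{k+1})\|_*^2$, and then expand $\tfrac{\mu}{2}\|v_k-v_{k+1}\|^2$ exactly from the scheme so that the cross term cancels identically. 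Your route is purely algebraic (no auxiliary inequality), produces the same coefficient $\alpha_k^2/(2\mu)$, and in fact yields a marginally stronger intermediate bound with the extra discarded term $-\tfrac{\mu\alpha_k(1+\alpha_k)}{2}\|x_{k+1}-v_{k+1}\|^2$; by contrast the paper's version retains the strong-convexity term $-\tfrac{\mu}{2}\|x_{k+1}-x_k\|^2$ which you drop (using plain convexity of $f$ in Step 1). Neither extra negative term is used afterward, so both bounds reduce to \eqref{eq:conv-explicitEuler-agf-s}.
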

\begin{proof}
	We split the difference along the path $(x_k, v_{k})$ to $(x_{k+1}, v_{k})$ and then to $(x_{k+1}, v_{k+1})$:
	\begin{align*}
		\mathcal L_{k+1} - \mathcal L_k
		={}&  \mathcal L(x_{k+1}, v_{k}) - \mathcal L(x_k, v_{k}) \\
		&+ \mathcal L(x_{k+1}, v_{k+1}) - \mathcal L(x_{k+1}, v_{k}) 
		:={}I_1 + I_2.
	\end{align*}
	Again the idea is to express the difference in terms of $\dual{\nabla \mathcal L(\bs x_{k+1}), \mathcal G(\bs x_{k+1})}$ and then use the strong Lyapunov property.  
	
	For item $I_2$, we use the fact $\mathcal L(x_{k+1}, \cdot)$ is $\mu$-convex to get
	\begin{align}
		I_2\leqslant {} &\dual{\nabla_v \mathcal L(x_{k+1}, v_{k+1}), v_{k+1} - v_k} - \frac{\mu}{2}\nm{ v_{k+1} - v_k}^2\nonumber\\
		={}&\alpha_k \dual{\nabla_v \mathcal L(\bs x_{k+1}), \mathcal G^v(\bs x_{k+1})}- \frac{\mu}{2}\nm{ v_{k+1} - v_k}^2.\nonumber
	\end{align}
		We now estimate $I_1$ as follows
	\begin{equation*}
		\begin{split}
			I_1 = {}&f(x_{k+1})-f(x_k) =\dual{\nabla f(x_{k+1}),x_{k+1}-x_k}-D_{f}(x_k,x_{k+1})\\
			\leqslant {}&\dual{\nabla f(x_{k+1}),x_{k+1}-x_k}- \frac{\mu}{2}\nm{x_{k+1} - x_k}^2\\
			={}&\dual{\nabla_x \mathcal L(\bs x_{k+1}), x_{k+1} - x_k} - \frac{\mu}{2}\nm{x_{k+1} - x_k}^2.
		\end{split}
	\end{equation*}
	In the last step, we can switch from point $(x_{k+1}, v_k)$ to $(x_{k+1}, v_{k+1})$ because $\nabla_x \mathcal L = \nabla f(x)$ is independent of $v$. 
	
	Then we use the discretization \cref{eq:hbGS} to replace $x_{k+1} - x_k$ and compare with the flow evaluated at $\bm x_{k+1}=(x_{k+1}, v_{k+1})$:
	\begin{align*}
		&\dual{\nabla_x \mathcal L(\bs x_{k+1}), x_{k+1} - x_k} 
		={}  \alpha_k \dual{\nabla_x \mathcal L(\bs x_{k+1}), \mathcal G^x(\bs x_{k+1})}  +\alpha_k \dual{\nabla f(x_{k+1}), v_k - v_{k+1}}.
	\end{align*}
	Observing the bound for $I_2$, we  use Cauchy\textendash Schwarz inequality to bound the second term by that
	\begin{equation}\label{eq:df-diff-vk}
		\begin{split}
			\alpha_k \| \nabla f(x_{k+1})\|_*\| v_k - v_{k+1}\|\leqslant {}&
			\frac{\alpha_k^2}{2\mu} \| \nabla f(x_{k+1})\|_*^2 
			+ \frac{\mu}{2}\| v_k - v_{k+1}\|^2.
		\end{split}
	\end{equation}
	Adding all together, we get  
	$$
	\mathcal L_{k+1} - \mathcal L_k
	\leqslant \alpha_k \dual{\nabla \mathcal L(\bs x_{k+1}), \mathcal G(\bs x_{k+1})} + \frac{\alpha_k^2}{2\mu} \| \nabla f(x_{k+1})\|_*^2.
	$$	
	Then applying the strong Lyapunov property $\mathcal A(1,1,p)$ at $\bs x_{k+1}$, cf. \eqref{eq:A-hb}, yields that 
\begin{equation}\label{eq:diff-Lk}
			\mathcal L_{k+1} - \mathcal L_k
	\leqslant {} -\alpha_k\mathcal L_{k+1} 
	-\frac{\mu}{2}\nm{x_{k+1} - x_k}^2
	- \frac{\mu\alpha_k}{2}\nm{x_{k+1} - v_{k+1}}^2 +\frac{\alpha_k^2}{2\mu} \nm{\nabla f(x_{k+1})}_*^2.
\end{equation}
	This completes the proof with extra negative terms.
\end{proof}

Although there are additional negative terms in \cref{eq:diff-Lk}, they cannot be used to cancel the positive term involving $\nm{\nabla f(x_{k+1})}_*^2$ as they are not directly related. 

\subsection{Momentum methods}
To cancel the positive term in the right hand side of \cref{eq:conv-explicitEuler-agf-s}, we add one extra gradient descent step to \cref{eq:hbGS}:
\begin{equation}\label{eq:hbGSGD}
	\left\{
	\begin{split}
		y_{k} ={}&x_k+\alpha_k (v_{k} - y_{k}),\\
		v_{k+1} ={}&v_k+ \alpha_k (y_{k} - v_{k+1}) - \frac{\alpha_k}{\mu} \nabla f(y_{k}),\\
		x_{k+1} = {}&y_k - \frac{1}{L} \nabla f(y_{k}).
	\end{split}
	\right.
\end{equation}
Here we use $y_k$ for the intermediate approximation and $x_{k+1}$ is an extra gradient descent step obtained from $y_k$. Note that $\nabla f(y_k)$ appears twice in each iteration but it can be evaluated only once. 

Following the proof of the previous section, we are able to establish the contraction of the Lyapunov function \cref{eq:Lk-HB} for the modified scheme \cref{eq:hbGSGD}, which gives a momentum method and will be summarized later in \cref{algo:momentum}.


\begin{thm}
Assume $f\in\mathcal S_{\mu,L}^1$ with $0<\mu\leqslant L<\infty$. 
If $\alpha_k$ satisfies $	L\alpha_k^2\leqslant \mu (1+ \alpha_k)$,
	then for \cref{eq:hbGSGD} we have the contraction property
	\begin{equation}\label{eq:hblinear}
		\mathcal L_{k+1}  \leqslant \frac{1}{1 + \alpha_k} \mathcal L_{k}.
	\end{equation}
	In particular, choosing 
	$$
	\alpha_k =\sqrt{\frac{\mu}{L}}, \quad \text{ or } \, \alpha_k = \frac{\mu + \sqrt{\mu^2 + 4 L\mu}}{2L},
	$$
	we have the accelerated linear convergence rate
	$$
	\mathcal L_{k + 1}  \leqslant \frac{1}{1 +\sqrt{\mu/L}} \mathcal L_k.
	$$
\end{thm}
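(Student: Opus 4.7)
The plan is to leverage \cref{lem:explicitEuler-agf-s} directly. The first two lines of \cref{eq:hbGSGD} are structurally identical to \cref{eq:hbGS}, only with the intermediate point $y_k$ playing the role of the previously labeled semi-implicit iterate $x_{k+1}$. Applying \cref{lem:explicitEuler-agf-s} to the transition $(x_k,v_k)\longmapsto (y_k,v_{k+1})$ therefore produces, with no additional computation, the inequality
$$\mathcal L(y_k,v_{k+1}) - \mathcal L(x_k,v_k) \leqslant -\alpha_k\,\mathcal L(y_k,v_{k+1}) + \frac{\alpha_k^2}{2\mu}\nm{\nabla f(y_k)}_*^2.$$

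Next I would split the telescoping difference along the detour through the intermediate pair $(y_k,v_{k+1})$:
$$\mathcal L_{k+1}-\mathcal L_k = \bigl[f(x_{k+1})-f(y_k)\bigr] + \bigl[\mathcal L(y_k,v_{k+1}) - \mathcal L(x_k,v_k)\bigr],$$
using that the quadratic part of $\mathcal L$ is unchanged between $\bs x_{k+1}$ and $(y_k,v_{k+1})$ since both share $v_{k+1}$. The third line of \cref{eq:hbGSGD} is a gradient-descent step with step size $1/L$, so the descent bound \cref{eq:DL} yields
$$f(x_{k+1})-f(y_k)\leqslant -\frac{1}{2L}\nm{\nabla f(y_k)}_*^2.$$
Rearranging this same inequality gives $\mathcal L(y_k,v_{k+1})\geqslant \mathcal L_{k+1}+\nm{\nabla f(y_k)}_*^2/(2L)$, which I would substitute into the first display to convert $-\alpha_k\mathcal L(y_k,v_{k+1})$ into $-\alpha_k\mathcal L_{k+1}-\alpha_k\nm{\nabla f(y_k)}_*^2/(2L)$.

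Collecting the three contributions, the aggregate coefficient of $\nm{\nabla f(y_k)}_*^2$ is
$$\frac{\alpha_k^2}{2\mu}-\frac{1+\alpha_k}{2L},$$
which is nonpositive exactly under the hypothesis $L\alpha_k^2\leqslant\mu(1+\alpha_k)$. What remains is $\mathcal L_{k+1}-\mathcal L_k\leqslant -\alpha_k\mathcal L_{k+1}$, i.e.\ the claimed contraction \cref{eq:hblinear}. For the distinguished choices, $\alpha_k=\sqrt{\mu/L}$ satisfies the constraint since $L(\mu/L)=\mu\leqslant\mu(1+\sqrt{\mu/L})$ and yields the rate $1/(1+\sqrt{\mu/L})$ directly, while $\alpha_k=(\mu+\sqrt{\mu^2+4L\mu})/(2L)$ is the positive root of $L\alpha^2-\mu\alpha-\mu=0$; an elementary squaring argument gives $\alpha_k\geqslant\sqrt{\mu/L}$, so $1/(1+\alpha_k)\leqslant 1/(1+\sqrt{\mu/L})$. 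The main obstacle is really the choice of intermediate Lyapunov value: it is essential to pass through $\mathcal L(y_k,v_{k+1})$ rather than $\mathcal L_{k+1}$ when invoking the reused lemma, because only then does the extra gradient-descent step furnish a second negative $\nm{\nabla f(y_k)}_*^2$ contribution that is large enough, together with the direct descent surplus, to absorb the positive residue $\alpha_k^2\nm{\nabla f(y_k)}_*^2/(2\mu)$ under the sharp constraint.
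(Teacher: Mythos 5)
Your proposal is correct and takes essentially the same route as the paper. The two building blocks are identical — apply \cref{lem:explicitEuler-agf-s} to the transition $(x_k,v_k)\mapsto(y_k,v_{k+1})$, and use the $1/L$ gradient-descent step $f(x_{k+1})-f(y_k)\leqslant -\tfrac{1}{2L}\nm{\nabla f(y_k)}_*^2$; the paper simply multiplies the latter by $(1+\alpha_k)$ and adds to the first inequality, whereas you substitute $\mathcal L(y_k,v_{k+1})\geqslant \mathcal L_{k+1}+\tfrac{1}{2L}\nm{\nabla f(y_k)}_*^2$ into the $-\alpha_k\mathcal L(y_k,v_{k+1})$ term, which is an algebraically equivalent bookkeeping that yields the same coefficient $\tfrac{\alpha_k^2}{2\mu}-\tfrac{1+\alpha_k}{2L}$.
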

\begin{proof}
	In \cref{lem:explicitEuler-agf-s}, we have already proved that 
	\begin{equation}\label{eq:GSinequality}
		\begin{split}
			{}&(1+\alpha_k)\mathcal L(y_{k}, v_{k+1}) - \mathcal L(x_{k}, v_{k}) 
			\leqslant{} \frac{\alpha_k^2}{2\mu}\| \nabla f(y_{k})\|_*^2.
		\end{split}
	\end{equation}
	Thanks to \cref{eq:DL} and the extra gradient descent step in \cref{eq:hbGSGD}, we have the sufficient decay
	\begin{equation}\label{eq:GDdecay}
		\mathcal L(x_{k+1}, v_{k+1}) - \mathcal L(y_{k}, v_{k+1})  = f(x_{k+1}) -f(y_k)
		\leqslant- \frac{1}{2L}\| \nabla f(y_{k})\|_*^2.
	\end{equation}
	Multiplying $1+\alpha_k$ to \cref{eq:GDdecay} and adding to \cref{eq:GSinequality}, we get
	$$
	(1+\alpha_k)\mathcal L_{k+1} - \mathcal L_k \leqslant \left ( \frac{\alpha_k^2}{2\mu} - \frac{1+\alpha_k}{2L} \right )\| \nabla f(y_{k})\|_*^2\leqslant 0,
	$$
	as $	L\alpha_k^2\leqslant \mu (1+ \alpha_k)$. This implies \cref{eq:hblinear}. The rest part is obvious and therefore we conclude the proof of this theorem.
\end{proof}

We now present a simplified version in the following algorithm 
which drops the sequence $\{v_k\}$ from \eqref{eq:hbGSGD}. Verification of the equivalence is straightforward.
\begin{algorithm}[H]
	\caption{Momentum method for minimizing $f\in\mathcal S_{\mu,L}^1$ with $0<\mu\leqslant L<\infty$}
	\label{algo:momentum}
	\begin{algorithmic}[1] 
		\REQUIRE  $x_0,y_0\in V$.
		\FOR{$k=0,1,\ldots$}		
		\STATE Update $x_{k+1} = y_k - \frac{1}{L}\nabla f(y_k)$.
		\STATE Update $y_{k+1} = \left\{
		\begin{aligned}
			{}& \frac{\alpha y_k}{1+\alpha} -\frac{x_k}{(1+\alpha)^2} + \frac{2+\alpha}{(1+\alpha)^2}x_{k+1},&&\text{ if } \alpha =\sqrt{\mu/L},\\
						{}&  \frac{\alpha^2y_k}{(1+\alpha)^2} -\frac{x_k}{(1+\alpha)^2} + \frac{2x_{k+1}}{1+\alpha},&&\text{ if } \alpha =\frac{\mu + \sqrt{\mu^2 + 4 L\mu}}{2L}.
		\end{aligned}
		\right. $
		\ENDFOR		
	\end{algorithmic}
\end{algorithm}
\section{Asymptotic Vanishing Damping System}
In this section we study a second order ODE model, the so-called asymptotic vanishing damping (AVD) system~\citep{Su;Boyd;Candes:2016differential}:
\begin{equation}\label{eq:mf}
	x''+\frac{r}{t}x'+\nabla f(x) = 0,\quad t\geqslant t_1>0,
\end{equation}
where $r>0,\,f\in\mathcal S_{0,L}^1$ is smooth convex and initial conditions are $x(t_1)=x_0,\,x'(t_1)=x_1$. 
\subsection{Existing works}
The AVD  model \cref{eq:mf} was firstly derived and analyzed in~\citet{Su;Boyd;Candes:2016differential} for the case $r\geqslant 3$ then further studied in~\citet{aujol_optimal_2017,attouch_rate_2019} for $r>0$. The positive constant $r$ is very crucial for both the designing of Lyapunov function and the convergence rate analysis.

For $r\geqslant 3$,~\citet[Theorem 5]{Su;Boyd;Candes:2016differential}  proved that
\begin{equation}    \label{eq:conv-Su2015-ode}
	f(x(t)) - f(x^*) \leqslant 
	\frac{(r-1)^2}{2t^2}
	\nm{x_0-x^*}^2,
\end{equation}
by using the Lyapunov function
\begin{equation}\label{eq:L-Su-mu0}
	\mathcal L(t): = f(x(t))-f(x^*)+\frac{(r-1)^2}{2t^2}\nm{x+\frac{t}{r-1}x'-x^*}^2.
\end{equation}
Additionally, if $f$ is strongly convex,
then they also obtained a faster decay rate \citep[Theorem 8]{Su;Boyd;Candes:2016differential}
\begin{equation*}
	f(x(t)) - f(x^*) \leqslant 
	Ct^{-2r/3},
\end{equation*}
by the Lyapunov function
\begin{equation*}
	\mathcal L(t): = f(x(t))-f(x^*)+\frac{2r^2}{9t^2}\nm{x+\frac{3t}{2r}x'-x^*}^2.
\end{equation*}

Later on,~\citet{aujol_optimal_2017} introduced a Lyapunov function
\begin{equation}\label{eq:L-AD}
	\mathcal L(t): = f(x(t))-f(x^*)+\frac{\lambda^2}{2t^2}
	\nm{x+\frac{t}{\lambda}x'-x^*}^2+\frac{\xi}{2t^2}\nm{x(t)-x^*}^2,
\end{equation}
with $		\lambda = {}2\beta\min\{1,r/(2\beta+1)\}$ and $		\xi = {}\lambda\snm{\lambda+1-r}$,
and also generalized \eqref{eq:conv-Su2015-ode} to 
\begin{equation}\label{eq:conv-Su2015-ode-r}
	f(x(t)) - f(x^*) \leqslant 
	\left\{
	\begin{aligned}
		&Ct^{-2},&&\text{if}~r\geqslant 2\beta+1,\\
		&Ct^{-2r/(2\beta+1)},&&\text{if}~0<r<2\beta+1,
	\end{aligned}
	\right.
\end{equation}
where $(f-f(x^*))^\beta$ is convex with $\beta>0$.
Around the same time,~\citet{attouch_rate_2019} obtained the estimate~\eqref{eq:conv-Su2015-ode-r} 
for $\beta=1$,  
with the corresponding Lyapunov function \cref{eq:L-AD} taking $\beta=1$.
More importantly, they considered numerical discretizations for~\eqref{eq:mf}
and proved the sublinear convergence rate
\begin{equation}\label{eq:conv-Attouch-dis}
	f(x_k) - f(x^*) \leqslant 
	\left\{
	\begin{aligned}
		&Ck^{-2},&&\text{if}~r\geqslant 3,\\
		&Ck^{-2r/3},&&\text{if}~0<r<3,
	\end{aligned}
	\right.
\end{equation}
which matches the convergence rate 
\eqref{eq:conv-Su2015-ode-r} with $\beta=1$ for the continuous level.
\subsection{Strong Lyapunov condition}
Here, we only focus on the case $r= 3$. As discussed in~\citet{luo_chen_differential_2019}, the AVD model \cref{eq:mf} with $r=3$ is equivalent to NAG flow (see \cref{eq:ode-agf} in \cref{sec:NAG}) with suitable time scaling. We shall apply our strong Lyapunov condition to establish the decay rates of continuous problem and its numerical discretizations.

 Let $	v = x+tx'/2$ and introduce an auxiliary function $\gamma = 4t^{-2}$.
Then the AVD model \cref{eq:mf} with $r=3$ can be rewritten as 
\begin{equation}
	\label{eq:sys-Su}
	\left\{
	\begin{aligned}
		x' = {}&\sqrt{\gamma}(v-x),\\
		v' = {}&-\nabla f(x)/		\sqrt{\gamma},\\
		\gamma' = {}&-\gamma^{3/2},
	\end{aligned}
	\right.
\end{equation}
and the Lyapunov function \cref{eq:L-Su-mu0} reads equivalently as follows
\begin{equation}\label{eq:Lt-AVD}
	\mathcal L(\bm x): = f(x(t))-f(x^*)+\frac{\gamma(t)}{2}\nm{v(t)-x^*}^2,
\end{equation}
where $\bs x=(x, v, \gamma)$. Let us write the right hand side of \cref{eq:sys-Su} as $\mathcal G(\bs x)$.
It follows that
\begin{equation}\label{eq:LG-AVD}
	\begin{split}
		-\nabla \mathcal L(\bs x)\cdot \mathcal G(\bs x)={}&\sqrt{\gamma}\dual{\nabla f(x),x-x^*}+
		\frac{\gamma^{3/2}}{2}\nm{v-x^*}^2\\
		\geqslant{}&\sqrt{\gamma}(f(x)-f(x^*))
		+\frac{\gamma^{3/2}}{2}\nm{v-x^*}^2
		\geqslant{}\sqrt{\gamma}\mathcal L(\bs x).
	\end{split}
\end{equation}
Therefore $\mathcal L$ is a strong Lyapunov function of \cref{eq:mf} with $c(\bm x)=\sqrt{\gamma},\,q(\bm x)= 1$ and $p(\bs x)=0$. By \cref{thm:strongLya}, we obtain the decay rate $	\mathcal L(t)=O(t^{-2})$, which coincides with \cref{eq:conv-Su2015-ode}.

\subsection{Gauss-Seidel iteration with extra gradient step}
Given any time step size $\alpha_k>0$, we consider the following semi-implicit scheme for \cref{eq:sys-Su}:
\begin{subnumcases}{}
	\label{eq:AVD-GS-x}
		\frac{x_{k+1}-x_k}{\alpha_k} = {}\sqrt{\gamma_k}(v_k-x_{k+1}),\\
			\label{eq:AVD-GS-v}
	\frac{v_{k+1}-v_k}{\alpha_k}  = {}-\nabla f(x_{k+1})/		\sqrt{\gamma_k},\\
	\label{eq:AVD-GS-g}	
	\frac{\gamma_{k+1}-\gamma_k}{\alpha_k} = {}-\sqrt{\gamma_{k}}\gamma_{k+1},
	\end{subnumcases}
which is a Gauss-Seidel type discretization.
Mimicking \cref{eq:Lt-AVD}, for $\bm x_k = (x_k,v_k,\gamma_k)$, we introduce the discrete Lyapunov function
\begin{equation}\label{eq:Lk}
	\mathcal L_k
	:= \mathcal L(\bs x_k)
	= {}f(x_k)-f(x^*) +
	\frac{\gamma_k}{2}
	\nm{v_k-x^*}^2.
\end{equation}
A one iteration analysis is given below.
\begin{lem}
	\label{lem:one-step-AVD-semi}
If $f\in\mathcal S_{0,L}^1$, then for the semi-implicit scheme \cref{eq:AVD-GS-x} with any step size $\alpha_k>0$, we have 
	\begin{equation}\label{eq:one-step-AVD-semi}
		\begin{aligned}
			\mathcal L_{k+1} - \mathcal L_k
			\leqslant {}&-\alpha_k\sqrt{\gamma_k}\mathcal L_{k+1}
			- \frac{\gamma_k}{2}\nm{ v_{k+1} - v_k}^2
			+\alpha_k \sqrt{\gamma_k}\dual{\nabla f(x_{k+1}), v_k - v_{k+1}},
		\end{aligned}
	\end{equation}
	which implies 
	\begin{equation}\label{eq:one-step-AVD-semi-1}
		\mathcal L_{k+1} - \mathcal L_k
		\leqslant {}-\alpha_k\sqrt{\gamma_k}\mathcal L_{k+1} +\frac{\alpha_k^2}{2} \nm{\nabla f(x_{k+1})}_*^2.
	\end{equation}
\end{lem}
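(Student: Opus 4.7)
The plan is to repeat the path-splitting strategy used for the heavy-ball Gauss--Seidel scheme in \cref{lem:explicitEuler-agf-s}, now along the three-stage path $(x_k,v_k,\gamma_k) \to (x_{k+1},v_k,\gamma_k) \to (x_{k+1},v_{k+1},\gamma_k) \to (x_{k+1},v_{k+1},\gamma_{k+1})$. This decomposes $\mathcal L_{k+1}-\mathcal L_k = I_1+I_2+I_3$ with $I_1 = f(x_{k+1})-f(x_k)$, $I_2 = \frac{\gamma_k}{2}(\nm{v_{k+1}-x^*}^2-\nm{v_k-x^*}^2)$, and $I_3 = \frac{\gamma_{k+1}-\gamma_k}{2}\nm{v_{k+1}-x^*}^2$; the separation is clean because $\nabla_x\mathcal L=\nabla f(x)$ is independent of $(v,\gamma)$ and $\mathcal L$ is linear in $\gamma$.

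For $I_1$ I would drop the nonnegative Bregman remainder $D_f(x_k,x_{k+1})$ to obtain $I_1\leqslant \dual{\nabla f(x_{k+1}),x_{k+1}-x_k}$ and then convert this via \eqref{eq:AVD-GS-x} into $\alpha_k\sqrt{\gamma_k}\dual{\nabla f(x_{k+1}), v_k-x_{k+1}}$. For $I_2$, expanding the square gives $I_2 = \gamma_k\dual{v_{k+1}-v_k, v_{k+1}-x^*} - \frac{\gamma_k}{2}\nm{v_{k+1}-v_k}^2$, and the $v$-update \eqref{eq:AVD-GS-v} together with the cancellation $\gamma_k\cdot\alpha_k/\sqrt{\gamma_k}=\alpha_k\sqrt{\gamma_k}$ produces $-\alpha_k\sqrt{\gamma_k}\dual{\nabla f(x_{k+1}), v_{k+1}-x^*}-\frac{\gamma_k}{2}\nm{v_{k+1}-v_k}^2$. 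The $\gamma$-update \eqref{eq:AVD-GS-g} yields $I_3 = -\frac{\alpha_k\sqrt{\gamma_k}\gamma_{k+1}}{2}\nm{v_{k+1}-x^*}^2$ immediately.

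Summing $I_1+I_2$, the cross-terms rearrange so that the combined coefficient of $\alpha_k\sqrt{\gamma_k}\nabla f(x_{k+1})$ is $(v_k-x_{k+1})-(v_{k+1}-x^*)=(x^*-x_{k+1})+(v_k-v_{k+1})$. Invoking convexity $\dual{\nabla f(x_{k+1}), x^*-x_{k+1}}\leqslant f(x^*)-f(x_{k+1})$ produces $-\alpha_k\sqrt{\gamma_k}(f(x_{k+1})-f(x^*))$, and adding $I_3$ assembles this with $-\frac{\alpha_k\sqrt{\gamma_k}\gamma_{k+1}}{2}\nm{v_{k+1}-x^*}^2$ into exactly $-\alpha_k\sqrt{\gamma_k}\mathcal L_{k+1}$, while the leftover pieces match the two remaining terms of \eqref{eq:one-step-AVD-semi}. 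This is the discrete echo of the strong Lyapunov estimate \eqref{eq:LG-AVD}, with convexity of $f$ at $x_{k+1}$ playing the role that the continuous condition $\mathcal A(\sqrt{\gamma},1,0)$ plays there.

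Finally, \eqref{eq:one-step-AVD-semi-1} follows from the single Young inequality $\alpha_k\sqrt{\gamma_k}\dual{\nabla f(x_{k+1}),v_k-v_{k+1}}\leqslant \frac{\alpha_k^2}{2}\nm{\nabla f(x_{k+1})}_*^2+\frac{\gamma_k}{2}\nm{v_{k+1}-v_k}^2$, which exactly cancels the $-\frac{\gamma_k}{2}\nm{v_{k+1}-v_k}^2$ term. I foresee no genuine obstacle: the only delicate point is keeping the $\sqrt{\gamma_k}$-factors straight across the three updates so that $I_2$ carries the same prefactor $\alpha_k\sqrt{\gamma_k}$ as $I_1$, which is what enables the collapse to $\mathcal L_{k+1}$.
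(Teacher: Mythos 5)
Your proof is correct and follows essentially the same route as the paper: the identical three-stage Gauss--Seidel path split $\mathcal L_{k+1}-\mathcal L_k=I_1+I_2+I_3$, convexity of $f$ at $x_{k+1}$ (which is exactly what the strong Lyapunov bound \eqref{eq:LG-AVD} encodes), and a single Young inequality to pass from \eqref{eq:one-step-AVD-semi} to \eqref{eq:one-step-AVD-semi-1}. The only presentational difference is that you substitute the discrete updates directly and collect terms by hand, which sidesteps the paper's $\tau_k=\sqrt{\gamma_k/\gamma_{k+1}}$ bookkeeping needed to phrase each $I_j$ as $\alpha_k\tau_k\dual{\nabla\mathcal L(\bm x_{k+1}),\mathcal G(\bm x_{k+1})}$ before invoking $\mathcal A(\sqrt{\gamma},1,0)$ abstractly.
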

\begin{proof}
	Let us calculate the difference $	\mathcal L_{k+1} - \mathcal L_k		=I_1 + I_2 + I_3$ where
	\begin{subnumcases}{}
		\label{eq:I1-AVD}
{}I_1:=		\mathcal L(x_{k+1}, v_{k},\gamma_{k}) - \mathcal L(x_k, v_{k},\gamma_{k}),\\		
{}I_2:=	\mathcal L(x_{k+1}, v_{k+1},\gamma_{k}) - \mathcal L(x_{k+1}, v_{k},\gamma_{k}),\\
{}I_3:=	\mathcal L(x_{k+1}, v_{k+1},\gamma_{k+1}) - \mathcal L(x_{k+1}, v_{k+1},\gamma_{k}).
		\end{subnumcases}
	Set $\tau_k = \sqrt{\gamma_k/\gamma_{k+1}}$. Below, we shall estimate the above three terms one by one.
	
	It is evident that 
	\[
	I_3 = \dual{\nabla_\gamma\mathcal L(\bm x_{k+1}),\gamma_{k+1}-\gamma_{k}} =\alpha_k\tau_k\dual{\nabla_\gamma\mathcal L(\bm x_{k+1}),\mathcal G^\gamma(\bm x_{k+1})}.
	\]
	For item $I_2$, we use the fact $\mathcal L(x_{k+1}, \cdot, \gamma_k)$ is $\gamma_k$-convex to get
\[
	\begin{aligned}
	I_2\leqslant {} &\dual{\nabla_v \mathcal L(x_{k+1}, v_{k+1},\gamma_{k}), v_{k+1} - v_k} - \frac{\gamma_k}{2}\nm{ v_{k+1} - v_k}^2\\
	= {} &-\dual{\sqrt{\gamma_{k}}(v_{k+1}-x^*),\nabla f(x_{k+1})} - \frac{\gamma_k}{2}\nm{ v_{k+1} - v_k}^2\\
	= {} &-\tau_k\dual{\sqrt{\gamma_{k+1}}(v_{k+1}-x^*),\nabla f(x_{k+1})} - \frac{\gamma_k}{2}\nm{ v_{k+1} - v_k}^2,
\end{aligned}
\]
and in view of \eqref{eq:AVD-GS-v}, we have
	\begin{equation}\label{eq:I2}
	I_2\leqslant 	\alpha_k\tau_k \dual{\nabla_v \mathcal L(\bs x_{k+1}), \mathcal G^v(\bs x_{k+1})}- \frac{\gamma_k}{2}\nm{ v_{k+1} - v_k}^2.
	\end{equation}
	We then estimate $I_1$ as follows
	\begin{equation*}
		\begin{split}
			I_1 = {}&f(x_{k+1})-f(x_k) 
			\leqslant {}\dual{\nabla f(x_{k+1}),x_{k+1}-x_k}
			={}\dual{\nabla_x \mathcal L(\bs x_{k+1}), x_{k+1} - x_k}.
		\end{split}
	\end{equation*}
	In the last step, we have switched from point $(x_{k+1}, v_k,\gamma_k)$ to $(x_{k+1}, v_{k+1},\gamma_{k+1})$ because $\nabla_x \mathcal L = \nabla f(x)$ is independent of $(v,\gamma)$. Then we use the discretization \eqref{eq:AVD-GS-x} to replace $x_{k+1} - x_k$ and compare with the flow evaluated at $\bm x_{k+1}=(x_{k+1}, v_{k+1}, \gamma_{k+1})$:
	\begin{align*}
		&\dual{\nabla_x \mathcal L(\bs x_{k+1}), x_{k+1} - x_k} =
		  \alpha_k \tau_k\dual{\nabla_x \mathcal L(\bs x_{k+1}), \mathcal G^x(\bs x_{k+1})}  +\alpha_k \sqrt{\gamma_k}\dual{\nabla f(x_{k+1}), v_k - v_{k+1}}.
	\end{align*}
	
	Whence, adding all together and applying the strong Lyapunov condition $\mathcal A(\sqrt{\gamma},1,0)$ at $\bs x_{k+1}$ (cf. \cref{eq:LG-AVD}) yield that 
	\[
	\begin{aligned}
		\mathcal L_{k+1} - \mathcal L_k
		\leqslant {}&-\alpha_k\sqrt{\gamma_k}\mathcal L_{k+1}
		- \frac{\gamma_k}{2}\nm{ v_{k+1} - v_k}^2
			+\alpha_k \sqrt{\gamma_k}\dual{\nabla f(x_{k+1}), v_k - v_{k+1}}.
	\end{aligned}
	\]
	This proves \cref{eq:one-step-AVD-semi}. Besides, applying Cauchy\textendash Schwarz inequality gives \cref{eq:one-step-AVD-semi-1} and completes the proof of this lemma.
\end{proof}

Obviously, one cannot obtain contraction result of $\mathcal L_k$ from \cref{lem:one-step-AVD-semi}. To cancel the positive term in \cref{eq:one-step-AVD-semi-1}, we then modify \cref{eq:AVD-GS-v} by adding one gradient descent step:
\begin{subnumcases}{}
		\label{eq:semi-sys-Su-non-mod}
			\frac{y_{k}-x_k}{\alpha_k} = {}\sqrt{\gamma_k}(v_k-y_{k}),\\
				\label{eq:semi-sys-Su-non-mod-v}
	\frac{v_{k+1}-v_k}{\alpha_k}  = {}-\nabla f(y_{k})/		\sqrt{\gamma_k},\\
		\label{eq:semi-sys-Su-non-mod-x}
	x_{k+1} -y_k= {}-\frac{1}{L}\nabla f(y_k),\\
	\label{eq:semi-sys-Su-non-mod-g}	
	\frac{\gamma_{k+1}-\gamma_k}{\alpha_k} = {}-\sqrt{\gamma_{k}}\gamma_{k+1}.
	\end{subnumcases}
Thanks to \cref{lem:one-step-AVD-semi}, we have 
\[
\widehat{	\mathcal L}_{k+1} - \mathcal L_k
\leqslant {} -\alpha_k\sqrt{\gamma_k}\widehat{\mathcal L}_{k+1} +\frac{\alpha_k^2}{2} \nm{\nabla f(y_{k})}_*^2,
\]
where 
\begin{equation}\label{eq:hat-Lk1}
	\widehat{	\mathcal L}_{k+1}:= 
	f(y_k)-f(x^*)+\frac{\gamma_{k+1}}{2}\nm{v_{k+1}-x^*}^2.
\end{equation}
Moreover, by \cref{eq:DL} and the gradient descent step of $x_{k+1}$ in \cref{eq:semi-sys-Su-non-mod}, we see that 
\begin{equation}\label{eq:diff-Lk1-hLk1}
	\mathcal L_{k+1} -\widehat{	\mathcal L}_{k+1}  = f(x_{k+1})-f(y_k) \leqslant -\frac{1}{2L}\nm{\nabla f(y_k)}_*^2.
\end{equation}
This promises the contraction property 
\begin{equation}\label{eq:conv-Lk-AVD}
	\mathcal L_{k+1} - \mathcal L_k
	\leqslant {} -\alpha_k\sqrt{\gamma_k}\mathcal L_{k+1},
\end{equation}
provided that $L\alpha_k^2\leqslant  1+\alpha_k\sqrt{\gamma_k}$. 

Before the convergence analysis, let us simplify \cref{eq:semi-sys-Su-non-mod}. If $L\alpha_k^2=  1+\alpha_k\sqrt{\gamma_k}$, then by \eqref{eq:semi-sys-Su-non-mod} and \eqref{eq:semi-sys-Su-non-mod-v}, we have
\[
v_{k+1} = x_{k+1}+\frac{x_{k+1}-x_k}{\alpha_k\sqrt{\gamma_k}}.
\]
Plugging this into \eqref{eq:semi-sys-Su-non-mod} and using \eqref{eq:semi-sys-Su-non-mod-g} imply that
\[
y_{k+1} = x_{k+1} + \frac{x_{k+1}-x_{k}}{L\alpha_k^2\sqrt{L}\alpha_{k+1}}.
\]
Thus the sequences $\{v_k\}$ and $\{\gamma_k\}$ can totally be dropped. 
\begin{thm}
	\label{thm:one-step-AVD-semi-mod}
	For \cref{eq:semi-sys-Su-non-mod}, we have 
	\begin{equation}\label{eq:one-step-AVD-semi-mod}
		\mathcal L_{k+1} - \mathcal L_k
\leqslant {} -\alpha_k\sqrt{\gamma_k}\mathcal L_{k+1}.
	\end{equation}
This implies that
	\[
	\mathcal L_k\leqslant \mathcal L_0\times \prod_{i=0}^{k-1}\frac{1}{1+\alpha_i\sqrt{\gamma_i}}=\frac{\gamma_k}{\gamma_0}\mathcal L_0,
	\]
	where the rate of convergence is given by, with $r = \gamma_0/L$,
	\begin{equation}\label{eq:est-gk}
		\frac{\gamma_{k}}{\gamma_0}\leqslant \left (1+ \frac{\sqrt{r}}{2+\sqrt{r}}\right )^2 			\left (\frac{2}{2 + \sqrt{r} \, k} \right )^2.
	\end{equation}
\end{thm}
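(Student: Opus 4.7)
My plan is to prove this in three stages, mirroring the structure of the statement.

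Stage one is the one-step contraction \eqref{eq:one-step-AVD-semi-mod}. Almost all of the work is already done in the paragraph preceding the theorem: \cref{lem:one-step-AVD-semi} applied with $y_k$ in place of $x_{k+1}$ (since the momentum update \eqref{eq:semi-sys-Su-non-mod}\textendash \eqref{eq:semi-sys-Su-non-mod-v} evaluates $\nabla f$ at $y_k$) delivers
\[
\widehat{\mathcal L}_{k+1} - \mathcal L_k
\leqslant -\alpha_k\sqrt{\gamma_k}\widehat{\mathcal L}_{k+1}
+\frac{\alpha_k^2}{2}\nm{\nabla f(y_k)}_*^2
\]
with $\widehat{\mathcal L}_{k+1}$ as in \eqref{eq:hat-Lk1}. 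The extra gradient descent step \eqref{eq:semi-sys-Su-non-mod-x} together with the smoothness bound \eqref{eq:DL} gives \eqref{eq:diff-Lk1-hLk1}, i.e. $\mathcal L_{k+1}-\widehat{\mathcal L}_{k+1}\leqslant -\tfrac{1}{2L}\nm{\nabla f(y_k)}_*^2$. Adding these two inequalities (after noting that $\widehat{\mathcal L}_{k+1}\geqslant \mathcal L_{k+1}-\tfrac{1}{2L}\nm{\nabla f(y_k)}_*^2$, or more cleanly by multiplying the second by $1+\alpha_k\sqrt{\gamma_k}$ and adding, exactly as in \eqref{eq:GSinequality}\textendash \eqref{eq:GDdecay}) cancels $\nm{\nabla f(y_k)}_*^2$ provided $L\alpha_k^{\,2}\leqslant 1+\alpha_k\sqrt{\gamma_k}$, which holds by the definition of $\alpha_k$ in \eqref{eq:semi-sys-Su-non-mod}. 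This yields \eqref{eq:one-step-AVD-semi-mod}.

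Stage two is the product formula. Rewriting \eqref{eq:one-step-AVD-semi-mod} as $(1+\alpha_k\sqrt{\gamma_k})\mathcal L_{k+1}\leqslant \mathcal L_k$ and iterating gives
\[
\mathcal L_k\leqslant \mathcal L_0\prod_{i=0}^{k-1}\frac{1}{1+\alpha_i\sqrt{\gamma_i}}.
\]
The $\gamma$-update \eqref{eq:semi-sys-Su-non-mod-g} rearranges to $\gamma_{k+1}(1+\alpha_k\sqrt{\gamma_k})=\gamma_k$, so $(1+\alpha_i\sqrt{\gamma_i})^{-1}=\gamma_{i+1}/\gamma_i$, and the product telescopes to $\gamma_k/\gamma_0$.

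Stage three, which I expect to be the main obstacle, is establishing the quantitative bound \eqref{eq:est-gk}. Following the template used earlier in the paper for $\rho_k$, I would look at the difference $1/\sqrt{\gamma_{k+1}}-1/\sqrt{\gamma_k}$. Using $\gamma_k-\gamma_{k+1}=\alpha_k\sqrt{\gamma_k}\,\gamma_{k+1}$,
\[
\frac{1}{\sqrt{\gamma_{k+1}}}-\frac{1}{\sqrt{\gamma_k}}
=\frac{\gamma_k-\gamma_{k+1}}{\sqrt{\gamma_k\gamma_{k+1}}(\sqrt{\gamma_k}+\sqrt{\gamma_{k+1}})}
=\frac{\alpha_k\sqrt{\gamma_{k+1}}}{\sqrt{\gamma_k}+\sqrt{\gamma_{k+1}}}.
\]
Using $\{\gamma_k\}$ decreasing (so $\sqrt{\gamma_{k+1}}\leqslant \sqrt{\gamma_k}$) and the defining relation $L\alpha_k^{\,2}=1+\alpha_k\sqrt{\gamma_k}$, I would bound $\alpha_k\sqrt{\gamma_{k+1}}/(\sqrt{\gamma_k}+\sqrt{\gamma_{k+1}})$ from below by $\tfrac{1}{2}/\sqrt{L}$ times a factor that absorbs into the prefactor; equivalently, since $\alpha_k\geqslant 1/\sqrt{L}$ from $L\alpha_k^{\,2}\geqslant 1$ and $\sqrt{\gamma_{k+1}/\gamma_k}=(1+\alpha_k\sqrt{\gamma_k})^{-1/2}$, a constant lower bound $\tfrac{1}{2\sqrt{L}}\cdot\bigl(1+\tfrac{\sqrt{r}}{2+\sqrt{r}}\bigr)^{-1}$ arises (the worst case being $k=0$ where $\gamma_k=\gamma_0=rL$). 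Summing this telescoping inequality yields
\[
\frac{1}{\sqrt{\gamma_k}}\geqslant \frac{1}{\sqrt{\gamma_0}}+\frac{k}{2\sqrt{L}}\cdot\frac{1}{1+\sqrt{r}/(2+\sqrt{r})},
\]
and squaring and dividing by $\gamma_0=rL$ gives \eqref{eq:est-gk}. The delicate point is tracking the constant $1+\sqrt{r}/(2+\sqrt{r})$ correctly; the cleanest route is to verify it explicitly at the worst index and then propagate monotonicity of $\gamma_k$.
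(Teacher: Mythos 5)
Your stages one and two reproduce the paper's argument: the paper likewise obtains \eqref{eq:one-step-AVD-semi-mod} by combining the bound from \cref{lem:one-step-AVD-semi} (applied to $\widehat{\mathcal L}_{k+1}$ defined in \eqref{eq:hat-Lk1}) with \eqref{eq:diff-Lk1-hLk1} under the step-size constraint $L\alpha_k^2 = 1+\alpha_k\sqrt{\gamma_k}$, and the product formula for $\gamma_k/\gamma_0$ follows from \eqref{eq:semi-sys-Su-non-mod-g} exactly as you write it.

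For stage three you take a genuinely different route from the paper, and it is in fact the cleaner one. The paper observes that $\sqrt{\gamma_{k+1}}-\sqrt{\gamma_k} \leqslant -\tfrac{1}{2\sqrt{L}}\gamma_{k+1}$, i.e.\ $A_{k+1}-A_k\leqslant -\alpha A_{k+1}^2$ for $A_k=\sqrt{\gamma_k}$, and then invokes \cref{thm:ode-ineq-q>1-dis}~(4), whose inductive proof is precisely what produces the $(1+\delta)^2$ prefactor in \eqref{eq:est-gk}. You instead telescope $1/\sqrt{\gamma_k}$ directly, which is how \cref{thm:ode-ineq-q>1-dis}~(3) is proved and sidesteps the self-referential $A_{k+1}^2$ issue entirely. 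Your key identity
\[
\frac{1}{\sqrt{\gamma_{k+1}}}-\frac{1}{\sqrt{\gamma_k}}
=\frac{\alpha_k\sqrt{\gamma_{k+1}}}{\sqrt{\gamma_k}+\sqrt{\gamma_{k+1}}}
\]
is correct, but the subsequent constant tracking is looser than it needs to be, and the prefactor $\bigl(1+\tfrac{\sqrt r}{2+\sqrt r}\bigr)^{-1}$ you introduce is both unnecessary and not the value a worst-case-$k=0$ evaluation actually produces. The cleaner observation is that $\sqrt{\gamma_k/\gamma_{k+1}}=\sqrt{1+\alpha_k\sqrt{\gamma_k}}=\sqrt{L}\,\alpha_k$, so the right-hand side equals $\alpha_k/(\sqrt{L}\alpha_k+1)$, and since $\alpha_k\geqslant 1/\sqrt L$ this is $\geqslant \tfrac{1}{2\sqrt L}$ for every $k$ with no prefactor at all. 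Telescoping then gives $\gamma_k/\gamma_0\leqslant\bigl(\tfrac{2}{2+\sqrt r\,k}\bigr)^2$, which is a sharper bound than \eqref{eq:est-gk}; the extra $(1+\delta)^2$ in the theorem is an artifact of routing the estimate through \cref{thm:ode-ineq-q>1-dis}~(4). Your sketch does imply \eqref{eq:est-gk} because its bound is majorized by the theorem's, but you should state explicitly that you obtain a strictly smaller quantity rather than equality, and drop the spurious prefactor, which as written does not survive the squaring-and-dividing step in the form you claim.

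One minor caveat that applies to both your proof and the paper's: the step-size relation $L\alpha_k^2 = 1+\alpha_k\sqrt{\gamma_k}$ is nowhere built into the display \eqref{eq:semi-sys-Su-non-mod}; it enters only through the preceding discussion. You correctly treat it as an additional hypothesis, which is the right reading, but strictly speaking the theorem statement is incomplete without it.
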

\begin{proof}
By the above discussion, the contraction \cref{eq:one-step-AVD-semi-mod} is evident since $L\alpha_k^2= 1+\alpha_k\sqrt{\gamma_k}$. According to the equation of $\{\gamma_k\}$ in \eqref{eq:semi-sys-Su-non-mod-g}, it is clear that 
	\[
	\frac{\gamma_k}{\gamma_0}=\prod_{i=0}^{k-1}\frac{1}{1+\alpha_i\sqrt{\gamma_i}}.
	\]
	It remains to prove the decay rate of $\gamma_k$. We have $\gamma_{k}\geqslant \gamma_{k+1}$ and thus
	\[
	\sqrt{\gamma_{k+1}} -\sqrt{\gamma_{k}}=\frac{\gamma_{k+1} - \gamma_k}{\sqrt{\gamma_{k+1}} + \sqrt{\gamma_{k}}} = -\frac{\alpha_k\sqrt{\gamma_k}\gamma_{k+1}}{\sqrt{\gamma_{k+1}} + \sqrt{\gamma_{k}} }\leqslant -\frac{\alpha_k}{2}\gamma_{k+1}.
	\]
	As $\alpha_k = \sqrt{1+\alpha_k\sqrt{\gamma_k}}/\sqrt{L}\geqslant 1/\sqrt{L}$, we have
		\[
	\sqrt{\gamma_{k+1}} -\sqrt{\gamma_{k}}\leqslant -\frac{1}{2\sqrt{L}}\gamma_{k+1}.
	\]
Applying \cref{thm:ode-ineq-q>1-dis} (4) to the sequence $\{\sqrt{\gamma_k}\}$, we get the decay estimate \cref{eq:est-gk} and finish the proof of this theorem.
\end{proof}
\subsection{Gauss-Seidel iteration with extrapolation}
Instead of \cref{eq:semi-sys-Su-non-mod}, let us consider another modified scheme
\begin{subnumcases}{}
	\label{eq:mod-GS-AVD-y}
			\frac{y_{k}-x_k}{\alpha_k} = {}\sqrt{\gamma_k}(v_k-y_{k}),\\
				\label{eq:mod-GS-AVD-v}
	\frac{v_{k+1}-v_k}{\alpha_k}  = {}-\nabla f(y_{k})/		\sqrt{\gamma_k},\\
		\label{eq:mod-GS-AVD-x}
	\frac{x_{k+1}-x_k}{\alpha_k} = {}\sqrt{\gamma_k}(v_{k+1}-x_{k+1}),\\
		\label{eq:mod-GS-AVD-g}
	\frac{\gamma_{k+1}-\gamma_k}{\alpha_k} = {}-\sqrt{\gamma_{k}}\gamma_{k+1},
	\end{subnumcases}
where we used an extrapolation step \eqref{eq:mod-GS-AVD-x} to update $x_{k+1}$. This is different from the gradient descent step in \cref{eq:semi-sys-Su-non-mod}.
By \cref{lem:one-step-AVD-semi}, we have 
\[
\begin{aligned}
	\widehat{	\mathcal L}_{k+1} - \mathcal L_k
	\leqslant {}&-\alpha_k\sqrt{\gamma_k}\, \widehat{	\mathcal L}_{k+1}
	- \frac{\gamma_k}{2}\nm{ v_{k+1} - v_k}^2
	+\alpha_k \sqrt{\gamma_k}\dual{\nabla f(y_{k}), v_k - v_{k+1}},
\end{aligned}
\]
where $\widehat{	\mathcal L}_{k+1}$ is defined by \cref{eq:hat-Lk1}.
In addition, \cref{eq:diff-Lk1-hLk1} becomes 
\[
\mathcal L_{k+1} -\widehat{	\mathcal L}_{k+1}  = f(x_{k+1})-f(y_k) \leqslant \dual{\nabla f(y_k),x_{k+1}-y_k}+\frac{L}{2}\nm{x_{k+1}-y_k}^2.
\]
Combining \eqref{eq:mod-GS-AVD-y} with \eqref{eq:mod-GS-AVD-x} gives the relation
\[
(1+\alpha_k\sqrt{\gamma_k})(x_{k+1}-y_k) = \alpha_k\sqrt{\gamma_k}(v_{k+1}-v_k),
\]
which implies that
\[
\mathcal L_{k+1} -\widehat{	\mathcal L}_{k+1}\leqslant \frac{\alpha_k \sqrt{\gamma_k}}{1+\alpha_k\sqrt{\gamma_k}}\dual{\nabla f(y_{k}), v_{k+1}-v_k }+\frac{L\alpha_k^2\gamma_k}{(1+\alpha_k\sqrt{\gamma_k})^2}\nm{v_{k+1}-v_k}^2.
\]
Therefore, if $L\alpha_k^2\leqslant 1+\alpha_k\sqrt{\gamma_k}$, then the contraction \cref{eq:conv-Lk-AVD} follows immediately.

Moreover, if $L\alpha_k^2=  1+\alpha_k\sqrt{\gamma_k}$, then we claim that \cref{eq:mod-GS-AVD-g} coincides with \cref{eq:semi-sys-Su-non-mod}. It is sufficient to verify that \eqref{eq:mod-GS-AVD-x} is identical to \eqref{eq:semi-sys-Su-non-mod-x}. indeed, inserting \eqref{eq:semi-sys-Su-non-mod} and \eqref{eq:semi-sys-Su-non-mod-v} into \eqref{eq:semi-sys-Su-non-mod-x} gives
\[
\begin{aligned}
	x_{k+1} = {}&\frac{x_k + \alpha_{k}\sqrt{\gamma_k}v_{k+1}}{1+ \alpha_{k}\sqrt{\gamma_k}} = \frac{x_k + \alpha_{k}\sqrt{\gamma_k}(v_k-\alpha_k\nabla f(y_k)/\sqrt{\gamma_k})}{1+ \alpha_{k}\sqrt{\gamma_k}} \\
	={}&\frac{x_k + \alpha_{k}\sqrt{\gamma_k}v_k}{1+ \alpha_{k}\sqrt{\gamma_k}} -\frac{\alpha^2_k\nabla f(y_k)}{1+ \alpha_{k}\sqrt{\gamma_k}} =y_k-\frac{1}{L}\nabla f(y_k).
	\end{aligned}
\]
For other choice that violates the relation $L\alpha_k^2=  1+\alpha_k\sqrt{\gamma_k}$, we cannot obtain the equivalence. For simplicity, we will not consider general choices here.
\section{A Family of Nesterov Accelerated Gradient Methods}
\label{sec:NAG}
 The last two sections treat the HB model \cref{eq:hb} and the AVD model \cref{eq:mf} for strongly convex case ($\mu>0$) and convex case ($\mu=0$), respectively. Apart from this, we have not considered accelerated methods for the composite case $f = h+g$. 
 
 In this section, we shall propose a novel second order dynamical system called the {\it Hessian-driven Nesterov accelerated gradient} (HNAG) flow that involves a built-in time scaling and unifies the analysis for $\mu\geqslant 0$. We will design several accelerated first order optimization methods based on numerical discretizations of our HNAG flow system. Moreover, we extend this model to the composite setting and propose two accelerated proximal gradient methods. As before, the convergence analysis will be established via the strong Lyapunov condition.  
%
\subsection{Nesterov accelerated gradient flow}
\label{sec:HNAG}
In our recent work~\citep{luo_chen_differential_2019}, for $f\in\mathcal S_\mu^1$ with $\mu\geqslant 0$, we have introduced a new ODE model \begin{equation}\label{eq:ode-agf}
	\gamma x''+(\gamma+\mu)x'+\nabla f(x) = 0, \quad \gamma' = \mu - \gamma,
\end{equation}
with initial conditions $x(0) = x_0,\,x'(0) = x_1$ and $\gamma(0)=\gamma_0>0$. For algorithmic designing and convergence analysis, we prefer the alternative formulation as an ODE system
\begin{equation}\label{eq:agf-intro}
	\left\{
	\begin{aligned}
		x' = {}&v-x,\\
		v'={}&\frac{\mu}{\gamma}(x-v)-\frac{1}{\gamma}\nabla f(x),\\
		\gamma'={}&\mu-\gamma.
	\end{aligned}
	\right.
\end{equation}
An appropriate numerical discretization of \cref{eq:agf-intro} recoveries {\it exactly} Nesterov's optimal method constructed from estimate sequence~\citep[Chapter 2]{Nesterov:2013Introductory}. Hence, we call \cref{eq:ode-agf} and \cref{eq:agf-intro} {\it Nesterov accelerated gradient} (NAG) flows. Exponential decay of the Lyapunov function \cref{eq:Lt-AVD} has been established and it was also proved that Gauss-Seidel iteration with one extra gradient descent step lead to a variant of Nesterov accelerated gradient method; see~\citet{luo_chen_differential_2019}. 

Motivated by the dynamical inertial Newton (DIN) system proposed by~\citet{alvarez_second-order_2002} and Hessian-driven damping models ~\citet{AttouchMaingeRedont2012,Attouch;Chbani;Fadili;Riahi:2020First-Order},  we further propose a new second order dynamical system, which is called the {\it Hessian-driven Nesterov accelerated gradient} (HNAG) flow and reads as follows
\begin{equation}\label{eq:NAG-Hessian}
	\gamma x''+(\gamma+\mu) x'+\beta\gamma\nabla^2f(x)x'+(1+\mu\beta+\gamma\beta')\nabla f(x)=0,
\end{equation}
where $\beta>0$ is any positive smooth (continuous differentiable) function on $[0,\infty)$ and $\gamma$ is the same time scaling factor as that in \cref{eq:ode-agf}. 

%

Obviously, the HNAG flow model \cref{eq:NAG-Hessian} requires stronger smoothness $f\in \mathcal C^2\cap \mathcal S_\mu^1$ than NAG flow \cref{eq:ode-agf}. Therefore direct discretization based on \cref{eq:NAG-Hessian} is restrictive and might be expensive due to the existence of the Hessian matrix. Fortunately, as observed in~\citep{alvarez_second-order_2002}, 
if we write \eqref{eq:NAG-Hessian} as the first-order system
\begin{equation}\label{eq:Hagf-intro}
	\left\{
	\begin{aligned}
		x' = {}&v-x-\beta\nabla f(x),\\
		v'={}&\frac{\mu}{\gamma}(x-v)-\frac{1}{\gamma}\nabla f(x),\\
		\gamma'={}&\mu-\gamma,
	\end{aligned}
	\right.
\end{equation}
no $\nabla^2f(x)$ is needed. The formulation \eqref{eq:Hagf-intro} can be also thought of as a modified model of our previous NAG flow \cref{eq:agf-intro} by adding  one more damping term $-\beta\nabla f(x)$ to the system. In the next, we will see this minor modification brings faster decay of the gradient. Under standard assumption $f\in\mathcal S_{\mu,L}^1$ with $0\leqslant \mu\leqslant L<\infty$, existence and uniqueness of classical solution $(x,v)\in\mathcal C^1\times \mathcal C^1$ to \cref{eq:Hagf-intro} can be easily concluded from conventional theory of ODE.

For $\bs x=(x, v, \gamma)$, we still use the Lyapunov function
$\mathcal L(\bs x):=f(x)-f(x^*)+\frac{\gamma}{2}\nm{v-x^*}^2,$
and denote by $\mathcal G(\bm x)$ the right hand side of \cref{eq:Hagf-intro},
which then becomes $\bm x' = \mathcal G(\bm x)$.
Observing the identity \cref{eq:squares}
and the $\mu$-convexity of $f$ (cf. \cref{eq:Mxstar}),
a direct computation gives 
\begin{equation}\label{eq:A-HNAG}
\begin{split}
	-\nabla \mathcal L(\bs x) \cdot \mathcal G(\bs x) 
	={}& - \mu\dual{x-v,v-x^*}+\dual{\nabla f(x),x-x^*} \\
	{}&\quad+ \beta\nm{\nabla f(x)}_*^2+ \frac{\gamma - \mu}{2}\nm{v-x^*}^2\\
	\geqslant{}&  \mathcal L(\bs x)+ \beta \nm{\nabla f(x)}_*^2 + \frac{\mu}{2}\nm{x-v}^2.	
\end{split}
\end{equation}
Hence $\mathcal L$ is a strong Lyapunov function of  \cref{eq:Hagf-intro} and satisfies $\mathcal A(q,c,p)$ with $q= 1,\,c(\bs x)=1$, and $p^2(\bs x)=
\beta \nm{\nabla f(x)}_*^2+\frac{\mu}2\nm{x-v}^2$. 
Invoking \cref{thm:strongLya}, one can prove the exponential decay
\begin{equation}\label{eq:conv-hnag}
	\mathcal L(x(t)) +\int_{0}^{t}e^{s-t}\beta(s)\nm{\nabla f(x(s))}_*^2\dd s\leqslant e^{-t}\mathcal L(x(0)),\quad t\geqslant 0.
\end{equation}
Thanks to the built-in scaling factor $\gamma$, this holds true for $\mu \geqslant 0$ in a unified and simpler way. Additionally, as one may see from \cref{eq:conv-hnag}, the extra gradient norm square term $\beta \nm{\nabla f(x)}_*^2$ in \cref{eq:A-HNAG} brings faster decay of the gradient. 
\subsection{Nesterov accelerated gradient method}\label{sec:HNAG-GS-2}
Let us apply the Gauss-Seidel type discretization to \cref{eq:Hagf-intro} and obtain
\begin{equation}\label{eq:ex-HNAG}
	\left\{
	\begin{aligned}
		\frac{x_{k+1}-x_{k}}{\alpha_k}={}& v_{k}-x_{k+1}-\beta_k\nabla f(x_{k}),\\
		\frac{v_{k+1}-v_{k}}{\alpha_k}={}&
		\frac{\mu }{\gamma_k}(x_{k+1}-v_{k+1})
		-\frac{1}{\gamma_k}\nabla f(x_{k+1}),\\
		\frac{\gamma_{k+1} - \gamma_{k} }{\alpha_k}  ={}& 
		\mu -\gamma_{k+1},
	\end{aligned}
	\right.
\end{equation}
where $\alpha_k>0$ is the time step size.
Given the current iterate $\bm x_k=(x_k, v_k, \gamma_k)$, one compute $x_{k+1}$ and $v_{k+1}$ successively from the first and the second equations and then update the parameter $\gamma_{k+1}$ by the last one.
We have three parameters $(\alpha_k, \beta_k, \gamma_k)$ in \eqref{eq:ex-HNAG} and will set
\begin{equation}\label{eq:ab}
	\alpha_k \beta_k = 1/L, \quad L\alpha_k^2 = \gamma_k (2+\alpha_k), 
	\quad \alpha_k>0.
\end{equation}

Although there are two gradient evaluations in the $k$-th iteration of \cref{eq:ex-HNAG}, the second one $\nabla f(x_{k+1})$ can be reused in the $k+1$-th iteration for updating $x_{k+2}$. Moreover, introduce an extra variable 
\begin{equation}\label{eq:yk}
	y_k = x_k - \frac{1}{L}\nabla f(x_k),
\end{equation}
we can obtain an equivalent form of \cref{eq:ex-HNAG} which requires only one gradient evaluation in each iteration and has been summarized in \cref{algo:HNAG}.
\begin{algorithm}[H]
	\caption{NAG method for minimizing $f\in\mathcal S_{\mu,L}^1$ with $0\leqslant \mu\leqslant L<\infty$}
	\label{algo:HNAG}
	\begin{algorithmic}[1] 
		\REQUIRE  $\gamma_0 > 0,\,x_0,\,v_0 \in V$.
		\STATE Initialization $y_0 = x_{0} - \frac{1}{L}\nabla f(x_{0})$.
		\FOR{$k=0,1,\ldots$}		
		\STATE Compute $\alpha_k = \left (\gamma_k+ \sqrt{\gamma_k^2+8L\gamma_k}\right )/(2L)$.
		\smallskip
		\STATE Update $x_{k+1}= (y_k + \alpha_k v_k)/(1+ \alpha_k)$.
		\smallskip
		\STATE Compute $\displaystyle y_{k+1}= x_{k+1} - \frac{1}{L}\nabla f(x_{k+1})$.
		\smallskip
		\STATE Update $\displaystyle v_{k+1} =  \frac{\gamma_k v_k + \mu\alpha_k  x_{k+1}}{\gamma_k + \mu\alpha_k} + \frac{L\alpha_k}{\gamma_k + \mu\alpha_k } (y_{k+1}-x_{k+1})$.
		\smallskip
		\STATE Update $\gamma_{k+1} = (\mu\alpha_k + \gamma_k)/(1+\alpha_k)$.
		\ENDFOR		
	\end{algorithmic}
\end{algorithm}

\begin{lem}
	\label{lem:conv-ex1-ode-NAG}
	For \cref{algo:HNAG}, we have
	\begin{equation}\label{eq:conv1-ex1-ode-NAG}
		\mathcal L_{k+1} - \frac{1}{2L}\| \nabla f(x_{k+1})\|_*^2
		\leqslant 
		\frac{	1}{1+\alpha_k } \left ( \mathcal L_{k} - \frac{1}{2L}\| \nabla f(x_{k})\|_*^2\right ) \quad\forall\,k\geqslant 0,
	\end{equation}
	where $\mathcal L_k = \mathcal L(\bs x_k)
	= {}f(x_k)-f(x^*) + \frac{\gamma_k}{2} \nm{v_k-x^*}^2.$
\end{lem}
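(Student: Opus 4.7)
The plan is to follow the three-step paradigm from the introduction, adapted to target the modified Lyapunov recursion \eqref{eq:conv1-ex1-ode-NAG}. Rewriting the claim as $(1+\alpha_k)\mathcal L_{k+1} - \mathcal L_k \leqslant \frac{1+\alpha_k}{2L}\nm{\nabla f(x_{k+1})}_*^2 - \frac{1}{2L}\nm{\nabla f(x_k)}_*^2$, I would decompose $\mathcal L_{k+1} - \mathcal L_k = I_1 + I_2 + I_3$ along the path $(x_k, v_k, \gamma_k) \to (x_{k+1}, v_k, \gamma_k) \to (x_{k+1}, v_{k+1}, \gamma_k) \to (x_{k+1}, v_{k+1}, \gamma_{k+1})$, bound each piece using appropriate convexity inequalities and the discretization \eqref{eq:ex-HNAG}, and collect terms so that the parameter identities $\alpha_k\beta_k = 1/L$ and $L\alpha_k^2 = \gamma_k(2+\alpha_k)$ deliver the stated gradient-norm budget.

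For $I_1 = f(x_{k+1}) - f(x_k)$, instead of a $\mu$-convex upper bound I would use \eqref{eq:philowerL}, which gives $I_1 \leqslant \dual{\nabla f(x_{k+1}), x_{k+1}-x_k} - \frac{1}{2L}\nm{\nabla f(x_{k+1})-\nabla f(x_k)}_*^2$. Substituting the first equation of \eqref{eq:ex-HNAG} in the form $x_{k+1}-x_k = \alpha_k(v_k - x_{k+1}) - \frac{1}{L}\nabla f(x_k)$ and expanding $-\tfrac{1}{L}\dual{\nabla f(x_{k+1}), \nabla f(x_k)}$ via the polarization identity $2\dual{a,b} = \nm{a}_*^2 + \nm{b}_*^2 - \nm{a-b}_*^2$, the two $\nm{\nabla f(x_{k+1}) - \nabla f(x_k)}_*^2$ contributions cancel exactly, leaving the clean bound $I_1 \leqslant \alpha_k\dual{\nabla f(x_{k+1}), v_k - x_{k+1}} - \frac{1}{2L}\nm{\nabla f(x_{k+1})}_*^2 - \frac{1}{2L}\nm{\nabla f(x_k)}_*^2$.

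For $I_2$ I would apply \eqref{eq:squares} to write $I_2 = \gamma_k\dual{v_{k+1}-x^*, v_{k+1}-v_k} - \frac{\gamma_k}{2}\nm{v_{k+1}-v_k}^2$ and substitute the second equation of \eqref{eq:ex-HNAG}; $I_3$ follows from linearity of $\mathcal L$ in $\gamma$ and equals $\frac{\alpha_k(\mu-\gamma_{k+1})}{2}\nm{v_{k+1}-x^*}^2$. After adding $\alpha_k \mathcal L_{k+1}$, the $\gamma_{k+1}$-weighted quadratics merge into $\frac{\alpha_k\mu}{2}\nm{v_{k+1}-x^*}^2$; the pieces linear in $\nabla f(x_{k+1})$ together with the functional gap recombine as $\alpha_k\dual{\nabla f(x_{k+1}), v_k - v_{k+1}} + \alpha_k\dual{\nabla f(x_{k+1}), x^*-x_{k+1}} + \alpha_k[f(x_{k+1}) - f(x^*)]$, and the last two are controlled by $-\frac{\alpha_k\mu}{2}\nm{x^*-x_{k+1}}^2$ via $\mu$-convexity of $f$; the remaining $\mu$-quadratics then collapse to $-\frac{\alpha_k\mu}{2}\nm{x_{k+1}-v_{k+1}}^2$ by applying \eqref{eq:squares} to $\dual{v_{k+1}-x^*, x_{k+1}-v_{k+1}}$.

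The only surviving cross term is $\alpha_k\dual{\nabla f(x_{k+1}), v_k - v_{k+1}}$, bounded by Young's inequality by $\frac{\alpha_k^2}{2\gamma_k}\nm{\nabla f(x_{k+1})}_*^2 + \frac{\gamma_k}{2}\nm{v_{k+1}-v_k}^2$, whose quadratic exactly cancels the $-\frac{\gamma_k}{2}\nm{v_{k+1}-v_k}^2$ surviving from $I_2$. Using $L\alpha_k^2 = \gamma_k(2+\alpha_k)$ gives $\frac{\alpha_k^2}{2\gamma_k} = \frac{2+\alpha_k}{2L}$, which combined with the $-\frac{1}{2L}\nm{\nabla f(x_{k+1})}_*^2$ from $I_1$ produces precisely $\frac{1+\alpha_k}{2L}\nm{\nabla f(x_{k+1})}_*^2$; dropping the nonpositive residual $-\frac{\alpha_k\mu}{2}\nm{x_{k+1}-v_{k+1}}^2$ then closes the inequality. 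The main obstacle I anticipate is the first step: the clean cancellation of the $\nm{\nabla f(x_{k+1})-\nabla f(x_k)}_*^2$ terms in $I_1$ rests entirely on the identity $\alpha_k\beta_k = 1/L$, and the second subtle point is that the coefficient ``$2$'' in the step-size rule (rather than the ``$1$'' in standard NAG of Section~\ref{sec:HNAG}) is exactly what leaves the extra $\frac{1}{2L}$ to match the $-\frac{1}{2L}\nm{\nabla f(x_k)}_*^2$ shift demanded by the stronger Lyapunov sequence.
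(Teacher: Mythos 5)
Your proof is correct and follows essentially the same route as the paper: the same three-term split of $\mathcal L_{k+1}-\mathcal L_k$ along $(x_k,v_k,\gamma_k)\to(x_{k+1},v_k,\gamma_k)\to(x_{k+1},v_{k+1},\gamma_k)\to(x_{k+1},v_{k+1},\gamma_{k+1})$, the co-coercivity bound \eqref{eq:philowerL} for $I_1$, the Young/Cauchy--Schwarz step to absorb $\alpha_k\dual{\nabla f(x_{k+1}),v_k-v_{k+1}}$ into the $-\tfrac{\gamma_k}{2}\nm{v_{k+1}-v_k}^2$ term from $I_2$, and the parameter identities \eqref{eq:ab} to set the coefficients. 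The only stylistic difference is that you unpack the strong Lyapunov inequality \eqref{eq:A-HNAG} into its constituent convexity facts (\eqref{eq:Mxstar} together with the three-term identity \eqref{eq:squares}), whereas the paper substitutes $x_{k+1}-x_k=\alpha_k\mathcal G^x(\bs x_{k+1})+\text{lag}$, keeps the $\nm{\nabla f(x_{k+1})-\nabla f(x_k)}_*^2$ term explicit, and invokes $\mathcal A(1,1,p)$ as a black box before observing that the coefficient $\alpha_k\beta_k-1/L$ vanishes; your early polarization cancellation of the gradient-difference terms achieves the same effect one step sooner and is arguably slightly cleaner arithmetic, but the underlying inequalities and the role of the two parameter constraints are identical.
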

\begin{proof}
Following the proof of \cref{lem:one-step-AVD-semi}, we have the difference
$	\mathcal L_{k+1} - \mathcal L_k		=I_1 + I_2 + I_3$, where $I_1,\,I_2$ and $I_3$ are defined in \cref{eq:I1-AVD}.
Below, we shall estimate these three terms one by one.
	
	As $\mathcal L$ is linear in terms of $\gamma$, we see
	\begin{equation}\label{eq:I3}
		I_3 = {}\dual{\nabla_{\gamma}\mathcal L(\bs x_{k+1}), \gamma_{k+1} - \gamma_{k}} ={} \alpha_k (\nabla_{\gamma}\mathcal L(\bs x_{k+1}), \mathcal G^{\gamma}(\bs x_{k+1})).
	\end{equation}
	For the second item $I_2$, we use the fact $\mathcal L(x_{k+1}, \cdot, \gamma_k)$ is $\gamma_k$-convex to get
\begin{equation}\label{eq:I2-}
		\begin{aligned}
		I_2\leqslant {} &\dual{\nabla_v \mathcal L(x_{k+1}, v_{k+1},\gamma_{k}), v_{k+1} - v_k} - \frac{\gamma_k}{2}\nm{ v_{k+1} - v_k}^2\\
		={}&\alpha_k \dual{\nabla_v \mathcal L(\bs x_{k+1}), \mathcal G^v(\bs x_{k+1})}- \frac{\gamma_k}{2}\nm{ v_{k+1} - v_k}^2.
	\end{aligned}
\end{equation}
	In the last step, as the parameter $\gamma$ is canceled in the product $\dual{\nabla_v\mathcal L(\bs x), \mathcal G^v(\bs x)}_*$, we can switch the variable $(x_{k+1}, \gamma_k)$ to $(x_{k+1}, \gamma_{k+1})$. 
	
	We now focus on the first one $I_1$:
	\begin{equation*}
		\begin{split}
			I_1
			\leqslant {}&\dual{\nabla_x \mathcal L(\bs x_{k+1}), x_{k+1} - x_k} - \frac{1}{2L}\nm{\nabla f(x_{k+1}) - \nabla f(x_k)}_*^2.
		\end{split}
	\end{equation*}
	In the first term, we can switch $(x_{k+1}, v_k,\gamma_k)$ to $\bs x_{k+1}$ because $\nabla_x \mathcal L = \nabla f(x)$ is independent of $(v,\gamma)$. Then we use the discretization \cref{eq:ex-HNAG} to replace $x_{k+1} - x_k$ and compare with the flow evaluated at $\bs x_{k+1}$:
	\begin{align*}
		\dual{\nabla_x \mathcal L(\bs x_{k+1}), x_{k+1} - x_k} 
		={}&  \alpha_k \dual{\nabla_x \mathcal L(\bs x_{k+1}), \mathcal G^x(\bs x_{k+1})} \\
		& +\alpha_k\beta_k ( \nabla f(x_{k+1}) , \nabla f(x_{k+1}) - \nabla f(x_k))\\
		& \quad+\alpha_k \dual{\nabla f(x_{k+1}), v_k - v_{k+1}}.
	\end{align*}
	Observing the bound \cref{eq:I2-} for $I_2$, we  use Cauchy\textendash Schwarz inequality to bound the last term as follows
	\begin{equation*}
		\begin{split}
			\alpha_k \| \nabla f(x_{k+1})\|_*\| v_k - v_{k+1}\|\leqslant {}&
			\frac{\alpha_k^2}{2\gamma_k} \| \nabla f(x_{k+1})\|_*^2 
			+ \frac{\gamma_k}{2}\| v_k - v_{k+1}\|^2.
		\end{split}
	\end{equation*}
	We use the identity \cref{eq:squares} for the cross term
	\begin{align*}
		&\alpha_k\beta_k( \nabla f(x_{k+1}) , \nabla f(x_{k+1}) - \nabla f(x_k)) \\
		= & - \frac{\alpha_k\beta_k}{2}\| \nabla f(x_{k})\|_*^2
		+ \frac{\alpha_k\beta_k}{2}\| \nabla f(x_{k+1})\|_*^2 + \frac{\alpha_k\beta_k}{2}\| \nabla f(x_{k+1}) - \nabla f(x_k) \|_*^2.
	\end{align*}
	
	Adding all together and applying strong Lyapunov property $\mathcal A(1,1,p^2)$ with $p^2 =  \beta \nm{\nabla f(x)}_*^2
	+ \frac{\mu}{2}\nm{x-v}^2$ at $x_{k+1}$ (but with $\beta_k$ not $\beta_{k+1}$) yields that 
	\begin{equation*}
		\begin{split}
			\mathcal L_{k+1} - \mathcal L_k
			\leqslant {}& -\alpha_k\mathcal L_{k+1}  -\frac{\alpha_k\beta_k}{2}\nm{\nabla f(x_k)}_*^2  \\
			& \quad +\frac{1}{2}\left ( \frac{\alpha_k^2}{\gamma_k}  - \alpha_k\beta_k \right )\nm{\nabla f(x_{k+1})}_*^2			\\
			& \qquad+\frac{1}{2}\left ( \alpha_k\beta_k - \frac{1}{L}\right )\| \nabla f(x_{k+1}) - \nabla f(x_k) \|^2.
		\end{split}
	\end{equation*}
	By our choice of parameters \cref{eq:ab}:
	\[
	\alpha_k\beta_k-\frac{1}{L}=0,\quad
	\frac{\alpha_k^2}{\gamma_k}  
	-\alpha_k\beta_k = (1+\alpha_k)\frac{1}{L},
	\]
and consequently,
	\begin{align*}
		\mathcal L_{k+1} - \mathcal L_k
		\leqslant {}&- \alpha_k \mathcal L_{k+1}
		-\frac{1}{2L}\nm{\nabla f(x_k)}^2 + (1+\alpha_k) \frac{1}{2L}\nm{\nabla f(x_{k+1})}^2.
	\end{align*}
	Rearranging the above inequality gives the desired estimate \cref{eq:conv1-ex1-ode-NAG}.
\end{proof}	
%

For the extra variable $y_k$ defined by \cref{eq:yk}, we have by \cref{eq:DL}  that
\[
f(y_k)\leqslant f(x_k) - \frac{1}{2L}\|\nabla f(x_k)\|_*^2,
\]
which implies $$\mathcal L_k - \frac{1}{2L}\|\nabla f(x_k)\|_*^2 \geqslant f(y_k) - f(x^*) + \frac{\gamma_k}{2}\| v_k - x^*\|^2 = \mathcal L(y_k,v_k,\gamma_k)\geqslant 0.$$ Therefore it is easy to conclude from \cref{eq:conv1-ex1-ode-NAG} that
\[
\mathcal L(y_k,v_k,\gamma_k)\leqslant \mathcal L_k - \frac{1}{2L}\|\nabla f(x_k)\|_*^2\leqslant \rho_k\left(\mathcal L_0 - \frac{1}{2L}\|\nabla f(x_0)\|_*^2\right),
\]
where $\rho_k$ is defined by \cref{eq:lambdak}.

According to the above discussion, we conclude the following result. As we see, due to the slightly larger step size, in the inequality \eqref{eq:alpha2lowerbound}, the constant $1$ is increased to $2$ and thus the rate \eqref{eq:est-rhok} is slightly better. 
\begin{thm}
	For \cref{algo:HNAG} with $\gamma_0 = r L\geqslant \mu$, we have
	\begin{equation*}
		f(y_k) - f(x^*) + \frac{\gamma_k}{2}\| v_k - x^*\|^2 \leqslant  \rho_k \mathcal L_0 \quad\forall\,k\geqslant 0,
	\end{equation*}
where $\rho_k$ is defined by \cref{eq:lambdak} and satisfies the estimate 
\begin{equation}\label{eq:est-rhok}
	\rho_k 
	\leqslant 
	\min\left\{
\left ( \frac{\sqrt{2}}{\sqrt{2} + \sqrt{r}\, k}\right )^2,
	\left(1+\sqrt{\frac{2\mu}{L}}\right)^{-k}
	\right\}.
\end{equation}
\end{thm}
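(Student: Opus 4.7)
The plan is to combine the one-step contraction from \cref{lem:conv-ex1-ode-NAG} with the scalar decay analysis of the parameter sequence developed in the paragraph around \cref{eq:gk-ak,eq:decayrho-B>1/2}. First I would iterate the estimate
\[
\mathcal L_{k+1} - \tfrac{1}{2L}\nm{\nabla f(x_{k+1})}_*^2
\leqslant \tfrac{1}{1+\alpha_k}\bigl(\mathcal L_k - \tfrac{1}{2L}\nm{\nabla f(x_k)}_*^2\bigr)
\]
downward from $k$ to $0$; by the definition of $\rho_k$ in \cref{eq:lambdak} and the obvious bound $\mathcal L_0-\tfrac{1}{2L}\nm{\nabla f(x_0)}_*^2\leqslant \mathcal L_0$, this telescopes to
\[
\mathcal L_k - \tfrac{1}{2L}\nm{\nabla f(x_k)}_*^2 \leqslant \rho_k\,\mathcal L_0.
\]

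Next I would convert the left-hand side into the quantity claimed in the statement. Since $y_k = x_k-\tfrac{1}{L}\nabla f(x_k)$ is the gradient descent step with stepsize $1/L$, the descent inequality \cref{eq:DL} gives
\[
f(y_k)\leqslant f(x_k)-\tfrac{1}{2L}\nm{\nabla f(x_k)}_*^2,
\]
so that
\[
f(y_k)-f(x^*)+\tfrac{\gamma_k}{2}\nm{v_k-x^*}^2 \leqslant \mathcal L_k-\tfrac{1}{2L}\nm{\nabla f(x_k)}_*^2 \leqslant \rho_k\mathcal L_0,
\]
which is precisely the Lyapunov-type inequality in the theorem.

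The only genuinely new ingredient is the rate estimate \cref{eq:est-rhok} for $\rho_k$, and this is really just a rescaling of the result used for \cref{algo:momentum}. I would rewrite the algorithmic relation $L\alpha_k^2 = \gamma_k(2+\alpha_k)$ as
\[
\widetilde L\,\alpha_k^2 = \gamma_k\bigl(1+\tfrac{1}{2}\alpha_k\bigr),\qquad \widetilde L := L/2,
\]
which fits the template \cref{eq:gk-ak} with $B=\tfrac{1}{2}$ and effective ratio $\widetilde r := \gamma_0/\widetilde L = 2r$. Applying \cref{eq:decayrho-B>1/2} with $(\widetilde L, \widetilde r, B)$ in place of $(L,r,B)$ yields
\[
\rho_k \leqslant \min\left\{\Bigl(\tfrac{2}{2+\sqrt{2r}\,k}\Bigr)^2,\;\bigl(1+\sqrt{2\mu/L}\bigr)^{-k}\right\},
\]
and the identity $\tfrac{2}{2+\sqrt{2r}\,k}=\tfrac{\sqrt 2}{\sqrt 2+\sqrt r\,k}$ recasts the sublinear factor in the form displayed in \cref{eq:est-rhok}.

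The main (mild) obstacle is making sure the substitution $\widetilde L=L/2,\;B=1/2$ legitimately triggers \cref{eq:decayrho-B>1/2}; once that rescaling is in place, the remainder of the argument is a direct assembly of inequalities already proved in the excerpt.
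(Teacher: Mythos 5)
Your proof is correct and follows essentially the same route as the paper: telescoping \cref{lem:conv-ex1-ode-NAG}, converting via the gradient-descent step defining $y_k$, and then bounding $\rho_k$ via the parameter relation $L\alpha_k^2 = \gamma_k(2+\alpha_k)$. The paper handles the last step only by a remark that the constant $1$ in \cref{eq:alpha2lowerbound} becomes $2$; your reduction to \cref{eq:decayrho-B>1/2} via the rescaling $\widetilde L = L/2$, $B=1/2$, $\widetilde r = 2r$ is a cleaner way to say the same thing, and the arithmetic $\frac{2}{2+\sqrt{2r}\,k}=\frac{\sqrt{2}}{\sqrt{2}+\sqrt{r}\,k}$, $\sqrt{\mu/\widetilde L}=\sqrt{2\mu/L}$ indeed reproduces \cref{eq:est-rhok}.

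The one point you flagged as a potential obstacle does deserve a sentence of justification: the rate lemma is stated with the hypothesis $L\geqslant\mu$, and after the rescaling this would read $\widetilde L = L/2\geqslant\mu$, which is \emph{not} guaranteed here. However, inspecting the lemma's proof shows that $L\geqslant\mu$ is never used; the only consequence of the hypotheses that actually enters is $\gamma_k\geqslant\mu$, which follows from $\gamma_0\geqslant\mu$ alone, and that condition is invariant under your rescaling since $\widetilde r\widetilde L = rL = \gamma_0$. Once this is noted, your substitution legitimately invokes \cref{eq:decayrho-B>1/2}, and the remainder of the argument is, as you say, a direct assembly of inequalities already established.
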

\subsection{Accelerated proximal gradient methods}
We now move to the nonsmooth case. Let $f =h+g$ and assume that $f \in\mathcal S_{\mu}^{0}$ with $\mu\geqslant 0$, $h\in\mathcal S_L^1$ is the smooth part and the non-smooth part $g$ is convex and lower semicontinuous. 

In this setting, the HNAG flow \cref{eq:Hagf-intro} becomes
\begin{equation}\label{eq:Hagf-intro-nonsmooth}
	\left\{
	\begin{aligned}
		x' \in{}&v-x-\beta\partial f(x),\\
		v'\in{}&\frac{\mu}{\gamma}(x-v)-\frac{1}{\gamma}\partial f(x),\\
		\gamma'={}&\mu-\gamma.
	\end{aligned}
	\right.
\end{equation}
For $\bs x = (x,v,\gamma)$, let the right hand side of \cref{eq:Hagf-intro-nonsmooth} be $\mathcal G(\bm x)$ and we write $\mathcal G(\bm x,d)$ if $\partial f(x)$ is replaced by some $d\in\partial f(x)$. We still use 
the Lyapunov function \cref{eq:Lt-AVD}.
Similar with \cref{eq:A-HNAG}, one can easily verify the strong Lyapunov property: for any $d\in\partial f(x)$, 
\begin{equation}\label{eq:A-non-Lt-HNAG}
	-\partial \mathcal L(\bs x, d)\cdot \mathcal G(\bs x, d)\geqslant
	\mathcal L(\bs x)+ \beta \nm{d}_*^2
	+ \frac{\mu}{2}\nm{x-v}^2.
\end{equation}
Yet, unlike the smooth case \cref{eq:Hagf-intro}, it is generally hard to obtain classical $\mathcal C^1$ solution $(x,v)$ of \cref{eq:Hagf-intro-nonsmooth} since the subdifferential $\partial f(x)$ is a set-valued mapping and discontinuity may occur in $x'$ and $v'$. Also it is nontrivial to establish the corresponding nonsmooth version of the exponential decay \cref{eq:conv-hnag}. Here we skip further discussion and investigation on these topics but restrict ourselves to algorithm analysis based on the strong Lyapunov condition \cref{eq:A-non-Lt-HNAG}.

To utilize the separable structure of $f = h+g$, given the previous iterate $(x_k, v_k)$, we first find $x_{k+1}$ such that
\begin{equation}\label{eq:APG-1}
	\frac{x_{k+1}-x_{k}}{\alpha_k}\in{} v_{k}-x_{k+1}-\beta_k\nabla h(x_{k})-\beta_k\partial g(x_{k+1}),
\end{equation}
where the operator splitting, also known as forward-backward method, is used.
Let $y_k = x_k - \alpha_k \beta_k \nabla h(x_k)$, then we can write $x_{k+1}={} \proxi_{s_k g}(w_k)$ where
$$
w_k=\frac{y_k+\alpha_k v_k}{1+\alpha_k},\quad s_k=\frac{\alpha_k\beta_k}{1+ \alpha_k}.
$$
Note that we also have
\begin{equation}\label{eq:qk1}
	q_{k+1}:=\frac{w_k-x_{k+1}}{s_k}\in\partial g(x_{k+1}),
\end{equation}
which can be reused to discretize the second equation of \cref{eq:Hagf-intro-nonsmooth}. 

In summary, we obtain
%
%
a semi-implicit scheme for \cref{eq:Hagf-intro-nonsmooth}:
\begin{equation}\label{eq:ex-HODE}
	\left\{
	\begin{aligned}
		\frac{x_{k+1}-x_{k}}{\alpha_k} = {}& v_{k}-x_{k+1}-\beta_k(\nabla h(x_{k}) +q_{k+1}),\\
		\frac{v_{k+1}-v_{k}}{\alpha_k}={}&
		\frac{\mu }{\gamma_k}(x_{k+1}-v_{k+1})
		-\frac{1}{\gamma_k}\left(\nabla h(x_{k+1})+q_{k+1}\right),\\
		\frac{\gamma_{k+1} - \gamma_{k} }{\alpha_k}  ={}& 
		\mu -\gamma_{k+1},
	\end{aligned}
	\right.
\end{equation}
where $q_{k+1}$ is defined by \cref{eq:qk1}.
We chose the parameters $\alpha_k$ and $\beta_k$ by the rule
\begin{equation}\label{eq:split-ab}
	\alpha_k = \sqrt{\frac{\gamma_k}{L}},\quad \alpha_k\beta_k = \frac{1}{L},
\end{equation}
and simplify \cref{eq:ex-HODE} to obtain the following algorithm which is named by accelerated proximal gradient (APG) method.
\begin{algorithm}[H]
	\caption{APG method for minimizing $f=h+g,\,h\in\mathcal S_{\mu,L}^1$ with $0\leqslant \mu\leqslant L<\infty$}
	\label{algo:APGHNAG}
	\begin{algorithmic}[1] 
		\REQUIRE  $\gamma_0 > 0,\,x_0,\,v_0 \in V$.
\STATE Initialization $y_0 = x_{0} - \frac{1}{L}\nabla h(x_{0})$.
		\FOR{$k=0,1,\ldots$}		
		\smallskip
		\STATE Compute $\displaystyle w_k = \frac{y_k+\alpha_k v_k}{1+\alpha_k}$ and $\displaystyle s_k=\frac{1}{L(1+\alpha_k)}$.
		\STATE Update $\displaystyle x_{k+1}= \proxi_{s_k g}\left(w_k\right)$.
		\smallskip
		\STATE Compute $\displaystyle y_{k+1}= x_{k+1} - \frac{1}{L}\nabla h(x_{k+1})$.
		\smallskip
		\STATE Update $v_{k+1} =x_{k+1} + (y_{k+1} - y_k)/(\alpha_k + \mu/L)$.
		\smallskip
		\STATE Update $\alpha_{k+1} = \sqrt{(\alpha_k^2 +  \alpha_k\mu/L)/(1+\alpha_k)}$.
		\ENDFOR		
	\end{algorithmic}
\end{algorithm}


\begin{thm}
	\label{thm:conv-split}
	For \cref{algo:APGHNAG},
	we have the contraction property
	\begin{equation}\label{eq:conv-split}
		\mathcal L_{k+1}
		\leqslant 
		\frac{	 \mathcal L_k}{1+\alpha_k } - \frac{\| \nabla h(x_{k}) +q_{k+1} \|_*^2}{1+\alpha_k}\quad\forall\, k\geqslant 0,
	\end{equation} 
	where $\mathcal L_k
	= \mathcal L(\bs x_k)
	= {}f(x_k)-f(x^*) +
	\frac{\gamma_k}{2}
	\nm{v_k-x^*}^2.$
When $\gamma_0=rL \geqslant \mu$, it holds that
	\begin{equation}\label{eq:conv-algo2}
		\mathcal L_k
		\leqslant\mathcal L_0\times 			\min\left\{
			\left (\frac{\sqrt{r+1}+1}{\sqrt{r+1}+1 + \sqrt{r} k} \right )^2,\,
			\left(1+\sqrt{\frac{\mu}{L}}\right)^{-k}
			\right\}.
	\end{equation}
\end{thm}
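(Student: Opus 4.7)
The plan is to mirror the argument of \cref{lem:conv-ex1-ode-NAG} for the smooth NAG method, with the composite objective $f=h+g$ handled by splitting and by using the sub-gradient direction $d_{k+1} = \nabla h(x_{k+1}) + q_{k+1}$ with $q_{k+1}\in\partial g(x_{k+1})$ supplied by \cref{eq:qk1}. The non-smooth strong Lyapunov property \cref{eq:A-non-Lt-HNAG}, evaluated at $\bs x_{k+1}$ along the direction $d_{k+1}$, provides the $-\alpha_k \mathcal L_{k+1}$ contraction, while the parameter choices \cref{eq:split-ab} are tuned so that the discretization remainders collapse to the $-\|\nabla h(x_k)+q_{k+1}\|_*^2/(1+\alpha_k)$ term stated in \cref{eq:conv-split}.

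First I would decompose $\mathcal L_{k+1}-\mathcal L_k = I_1+I_2+I_3$ along the path $(x_k,v_k,\gamma_k) \to (x_{k+1},v_k,\gamma_k) \to (x_{k+1},v_{k+1},\gamma_k) \to (x_{k+1},v_{k+1},\gamma_{k+1})$, exactly as in \cref{eq:I1-AVD}. The $\gamma$-step $I_3$ is linear and equals $\alpha_k\langle\nabla_\gamma\mathcal L(\bs x_{k+1}),\mathcal G^\gamma(\bs x_{k+1})\rangle$; the $v$-step $I_2$ is bounded by $\alpha_k\langle\nabla_v\mathcal L(\bs x_{k+1}),\mathcal G^v(\bs x_{k+1},d_{k+1})\rangle - \frac{\gamma_k}{2}\|v_{k+1}-v_k\|^2$ by the $\gamma_k$-convexity of $\mathcal L(x_{k+1},\cdot,\gamma_k)$, after inserting the $v$-equation of \cref{eq:ex-HODE}. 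For $I_1 = (h(x_{k+1})-h(x_k)) + (g(x_{k+1})-g(x_k))$ I would use the Lipschitz upper bound on $h$ together with the sub-gradient inequality $g(x_{k+1})-g(x_k)\leqslant \langle q_{k+1},x_{k+1}-x_k\rangle$ to produce a linear term in $d_{k+1/2}:=\nabla h(x_k)+q_{k+1}$ plus a Bregman-type quadratic $\tfrac{L}{2}\|x_{k+1}-x_k\|^2$.

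Next I would substitute the $x$-equation of \cref{eq:ex-HODE} for $x_{k+1}-x_k$, which introduces $\alpha_k\langle\nabla_x\mathcal L(\bs x_{k+1}),\mathcal G^x(\bs x_{k+1},d_{k+1})\rangle$ (enabling the strong Lyapunov application) together with cross terms $\alpha_k\langle d_{k+1},v_k-v_{k+1}\rangle$, $\alpha_k\beta_k\langle d_{k+1},d_{k+1}-d_{k+1/2}\rangle$, and a mismatch term $\nabla h(x_{k+1})-\nabla h(x_k)$. Controlling the $v$-cross term by Cauchy--Schwarz against the free $-\frac{\gamma_k}{2}\|v_{k+1}-v_k\|^2$ from $I_2$, and using the polarization identity \cref{eq:squares} on the $d$-cross term, all remainders are assembled. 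Applying \cref{eq:A-non-Lt-HNAG} at $\bs x_{k+1}$ with direction $d_{k+1}$ and imposing $\alpha_k\beta_k = 1/L$, $L\alpha_k^2 = \gamma_k$ from \cref{eq:split-ab} should make the coefficients in front of $\|d_{k+1}\|_*^2$, $\|\nabla h(x_{k+1})-\nabla h(x_k)\|_*^2$, and $\|x_{k+1}-v_{k+1}\|^2$ non-positive, so that only the clean $-\|d_{k+1/2}\|_*^2/(1+\alpha_k)$ survives after dividing by $1+\alpha_k$, which proves \cref{eq:conv-split}. Iterating gives $\mathcal L_k \leqslant \rho_k\mathcal L_0$ with $\rho_k$ from \cref{eq:lambdak}, and since $L\alpha_k^2 = \gamma_k$ corresponds precisely to the case $B = 0$ of the preceding lemma, estimate \cref{eq:decayrho-B0} delivers \cref{eq:conv-algo2}.

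The hard part will be the bookkeeping of the mismatch $d_{k+1}-d_{k+1/2} = \nabla h(x_{k+1})-\nabla h(x_k)$: unlike in the fully smooth case, we cannot freely interchange $d_{k+1}$ and $d_{k+1/2}$, and the $\partial g$ piece is at $x_{k+1}$ in both. Verifying that the parameter pair $\alpha_k\beta_k=1/L$ and $L\alpha_k^2=\gamma_k$ is exactly what is needed for the combined quadratic forms in $\|d_{k+1}\|_*^2$, $\|d_{k+1/2}\|_*^2$, $\|v_{k+1}-v_k\|^2$ and $\|x_{k+1}-x_k\|^2$ to settle to a single negative multiple of $\|d_{k+1/2}\|_*^2$ (rather than a mixture) is the only non-mechanical check; the remaining convergence-rate step is a direct application of the $\rho_k$ analysis already established.
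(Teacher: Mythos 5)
Your overall scaffolding is right: the path $(x_k,v_k,\gamma_k)\to(x_{k+1},v_k,\gamma_k)\to(x_{k+1},v_{k+1},\gamma_k)\to(x_{k+1},v_{k+1},\gamma_{k+1})$, the bounds for $I_2,I_3$, the comparison of the discretization against $\mathcal G(\bs x_{k+1})$ with direction $d_{k+1}$, the polarization of the cross term $\alpha_k\beta_k\langle d_{k+1},\nabla h(x_{k+1})-\nabla h(x_k)\rangle$, and the appeal to \cref{eq:decayrho-B0} with $B=0$ for the rate are all exactly the paper's route. The gap is in your proposed bound for $I_1$.

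You propose to bound $h(x_{k+1})-h(x_k)$ by the descent lemma at $x_k$, that is, via \cref{eq:DL}, producing $I_1\leqslant\langle d_{k+1/2},x_{k+1}-x_k\rangle+\tfrac{L}{2}\|x_{k+1}-x_k\|^2$ with $d_{k+1/2}=\nabla h(x_k)+q_{k+1}$. The paper instead uses the cocoercivity bound \cref{eq:philowerL} at $x_{k+1}$, giving $I_1\leqslant\langle d_{k+1},x_{k+1}-x_k\rangle-\tfrac{1}{2L}\|\nabla h(x_{k+1})-\nabla h(x_k)\|_*^2$. These two upper bounds are not interchangeable here: yours is strictly weaker, since
\begin{equation*}
\langle d_{k+1/2},x_{k+1}-x_k\rangle+\tfrac{L}{2}\|x_{k+1}-x_k\|^2
-\Bigl(\langle d_{k+1},x_{k+1}-x_k\rangle-\tfrac{1}{2L}\|\nabla h(x_{k+1})-\nabla h(x_k)\|_*^2\Bigr)\geqslant 0,
\end{equation*}
the left side being a complete square. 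The crucial point is that the polarization of $\tfrac{1}{L}\langle d_{k+1},\nabla h(x_{k+1})-\nabla h(x_k)\rangle$ produces a \emph{positive} $+\tfrac{1}{2L}\|\nabla h(x_{k+1})-\nabla h(x_k)\|_*^2$, and it is precisely the $-\tfrac{1}{2L}\|\nabla h(x_{k+1})-\nabla h(x_k)\|_*^2$ supplied by cocoercivity in $I_1$ that cancels it. With your $I_1$ bound there is no such negative term, so the coefficient of $\|\nabla h(x_{k+1})-\nabla h(x_k)\|_*^2$ remains $+\tfrac{1}{2L}$ and the estimate does not close. Worse, if instead of converting to $d_{k+1}$ you substitute the $x$-equation of \cref{eq:ex-HODE} directly into your $I_1$ bound and use $\alpha_k\beta_k=1/L$, $L\alpha_k^2=\gamma_k$, the linear terms collapse and you are left with $I_1\leqslant\tfrac{\gamma_k}{2}\|v_k-x_{k+1}\|^2-\tfrac{1}{2L}\|d_{k+1/2}\|_*^2$; the positive $\tfrac{\gamma_k}{2}\|v_k-x_{k+1}\|^2$ has no counterpart in $I_2$ (which only supplies $-\tfrac{\gamma_k}{2}\|v_{k+1}-v_k\|^2$) or in the strong Lyapunov condition (whose extra negative term is $-\tfrac{\mu\alpha_k}{2}\|x_{k+1}-v_{k+1}\|^2$, which vanishes when $\mu=0$), and this form also loses the $\alpha_k\langle\nabla_x\mathcal L,\mathcal G^x\rangle$ piece needed to invoke \cref{eq:A-non-Lt-HNAG} at all. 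So the fix is to replace the descent-lemma bound on $h$ with the bound obtained from \cref{eq:philowerL} applied to $D_h(x_k,x_{k+1})$, exactly as the paper does; the rest of your outline then goes through.
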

\begin{proof}
Following the proof of \cref{lem:conv-ex1-ode-NAG}, we have the difference
	$	\mathcal L_{k+1} - \mathcal L_k		=I_1 + I_2 + I_3$, where $I_1,\,I_2$ and $I_3$ are defined in \cref{eq:I1-AVD}.

Clearly, the estimates \cref{eq:I2-,eq:I3} for $I_2$ and $I_3$ keep unchanged here:
\begin{align*}
		I_3 &={} \alpha_k (\nabla_{\gamma}\mathcal L(\bs x_{k+1}), \mathcal G^{\gamma}(\bs x_{k+1})),\\
		\label{eq:I2-HNAG}
	I_2 &\leqslant{}  \alpha_k \dual{\nabla_v \mathcal L(\bs x_{k+1}), \mathcal G^v(\bs x_{k+1}, d_{k+1})} - \frac{\gamma_k}{2}\nm{ v_{k+1} - v_k}^2,
\end{align*}
	where $d_{k+1} := \nabla h(x_{k+1})+ q_{k+1}\in\partial f(x_{k+1})$ and $q_{k+1}$ is defined by \cref{eq:qk1}.
	Observing that 
	\begin{equation*}
		I_1 =  \mathcal L(x_{k+1}, v_{k},\gamma_{k}) - \mathcal L(x_k, v_{k},\gamma_{k}) 
		= g(x_{k+1})- g(x_{k})+h(x_{k+1})- h(x_{k}),
	\end{equation*}
	we use the fact $q_{k+1}\in \partial g(x_{k+1})$ and \cref{eq:philowerL} to estimate $I_1$ as follows
	\begin{equation}\label{eq:I1}
		I_1 \leqslant{}
		\dual{d_{k+1}, x_{k+1} - x_k} 
		- \frac{1}{2L}\nm{\nabla h(x_{k+1}) - \nabla h(x_k)}_*^2.
	\end{equation}
	We use the discretization \cref{eq:ex-HODE} to replace $x_{k+1} - x_k$ and compare with the flow evaluated at $\bs x_{k+1}=(x_{k+1}, v_{k+1}, \gamma_{k+1})$:
	\begin{align*}
		\dual{d_{k+1}, x_{k+1} - x_k} 
		={}&  \alpha_k \dual{d_{k+1}, \mathcal G^x(\bs x_{k+1}, d_{k+1})} \\
		& +\alpha_k\beta_k \dual{d_{k+1}, \nabla h(x_{k+1}) - \nabla h(x_k)}\\
		& \quad+\alpha_k \dual{d_{k+1}, v_k - v_{k+1}}.
	\end{align*}
	Thanks to the negative term in the bound of $I_2$, the last term is bounded by
	\[
	\alpha_k \|d_{k+1}\|_*\| v_k - v_{k+1}\|\leqslant {}
	\frac{\alpha_k^2}{2\gamma_k} \|d_{k+1}\|_*^2 
	+ \frac{\gamma_k}{2}\| v_k - v_{k+1}\|^2.
	\]
	The cross term is expanded by combination of squares (cf. \eqref{eq:squares}), 
	\begin{align*}
		\frac{1}{L}\dual{d_{k+1}, \nabla h(x_{k+1}) - \nabla h(x_k)} ={}& - \frac{1}{2L}\| \nabla h(x_{k}) +q_{k+1} \|_*^2 + \frac{1}{2L}\| d_{k+1}\|_*^2 \\
		&+ \frac{1}{2L}\|  \nabla h(x_{k+1}) - \nabla h(x_k) \|_*^2
	\end{align*}
	%
	%
	%
	%
	We now get the estimate for $I_1$ as follows
	\begin{equation*}
		\begin{aligned}
			I_1 \leqslant{}& \alpha_k \dual{d_{k+1}, \mathcal G^x(\bs x_{k+1}, d_{k+1})}  +\frac{\gamma_k}{2}\| v_k - v_{k+1}\|^2 +\left(\frac{1}{2L}
			+\frac{\alpha_k^2}{2\gamma_k}\right)\nm{d_{k+1}}_*^2\\
			&\quad - \frac{1}{2L}\| \nabla h(x_{k}) +q_{k+1} \|_*^2.
		\end{aligned}
	\end{equation*}

	Putting all together and using strong Lyapunov property \cref{eq:A-non-Lt-HNAG} imply that
	\begin{align*}
		\mathcal L_{k+1} - \mathcal L_k
		\leqslant {}&\alpha_k ( \nabla\mathcal L(\bs x_{k+1},d_{k+1}), \mathcal G(\bs x_{k+1}))\\
		{}&\quad+\left(\frac{1}{2L}
		+\frac{\alpha_k^2}{2\gamma_k}\right)\nm{d_{k+1}}_*^2 - \frac{1}{2L}\| \nabla h(x_{k}) +q_{k+1} \|_*^2\\
		\leqslant {}&- \alpha_k \mathcal L_{k+1}+\left(\frac{1}{2L}
		+\frac{\alpha_k^2}{2\gamma_k}-\frac{1}{L}\right)\nm{d_{k+1}}_*^2 - \frac{1}{2L}\| \nabla h(x_{k}) +q_{k+1} \|_*^2\\
		={}& - \alpha_k \mathcal L_{k+1} - \frac{1}{2L}\| \nabla h(x_{k}) +q_{k+1} \|_*^2.
	\end{align*}
This proves \cref{eq:conv-split}. The final decay rate comes from \cref{eq:decayrho-B0} and \eqref{eq:split-ab}.
\end{proof}

If we choose
\begin{equation}\label{eq:ak-bk}
	\alpha_k = \sqrt{\frac{\gamma_k}{4L}},\quad \beta_k = \frac{1}{2L\alpha_k},
\end{equation}
then we have the identity
\begin{equation}\label{eq:key-id}
	\frac{\alpha_k^2\beta_k^2L}{2}
	+\frac{\alpha_k^2}{2\gamma_k}-\alpha_k\beta_k 
	=-\frac{\alpha_k\beta_k}{2}=-\frac{1}{4L}.
\end{equation}
This will keep the negative term $-\nm{d_{k+1}}_*^2$ and implies the faster convergence of gradient; see the theorem below. As the proof is a simple modification of previous one, we only present the result below.
\begin{thm}
	\label{thm:conv-split-2}
	If $f=h+g$ where $h\in\mathcal S_{\mu,L}^1$ with $0\leqslant \mu\leqslant L<\infty$, then for the splitting scheme \cref{eq:ex-HODE} with parameter 
	setting \cref{eq:ak-bk}, we have the contraction property
	\begin{equation}\label{eq:diff-Lk-split-rem-2}
		\mathcal L_{k+1} - \mathcal L_k
		\leqslant - \alpha_k \mathcal L_{k+1}
		-\frac{1}{4L}\nm{d_{k+1}}^2,
	\end{equation}
	and it follows that
	\begin{equation}\label{eq:conv-split-rem-2}
		\mathcal L_k+\frac{1}{4L}\sum_{i=0}^{k-1}
		\frac{\rho_k}{\rho_i} \nm{d_{i+1}}_*^2
		\leqslant\rho_k\mathcal L_0.
	\end{equation}
Above $\rho_k$ is defined by \cref{eq:lambdak} and satisfies the estimate, when $\gamma_0 = r L\geqslant \mu$, 
	\begin{equation}\label{eq:conv-algo2-2}
		\rho_{k} \leqslant 
		\min\left\{
		\left (\frac{2+\sqrt{r+4}}{2+\sqrt{r+4} +\sqrt{r}\, k} \right )^2,\,
		\left(1+\sqrt{\frac{\mu}{4L}}\right)^{-k}
		\right\}.
	\end{equation}
\end{thm}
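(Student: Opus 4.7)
The proof will follow the preceding Theorem for \cref{algo:APGHNAG} almost line by line; the only structural difference is that, under the new choice \cref{eq:ak-bk} one has $\alpha_k\beta_k = 1/(2L) \neq 1/L$, so the ``combination of squares'' identity used there must be replaced by a plain Young inequality on the discretization cross term. The algebraic identity \cref{eq:key-id} is then what guarantees that the coefficient of $\nm{d_{k+1}}_*^2$ in the resulting bound lands exactly on $-1/(4L)$ rather than $0$.

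First I would split $\mathcal L_{k+1}-\mathcal L_k = I_1+I_2+I_3$ as in \cref{eq:I1-AVD}. The estimates for $I_3$ (linearity in $\gamma$) and $I_2$ ($\gamma_k$-convexity of $\mathcal L(x_{k+1},\cdot,\gamma_k)$) carry over verbatim and produce $\alpha_k\dual{\nabla_\gamma\mathcal L(\bs x_{k+1}),\mathcal G^\gamma(\bs x_{k+1})}$ and $\alpha_k\dual{\nabla_v\mathcal L(\bs x_{k+1}),\mathcal G^v(\bs x_{k+1},d_{k+1})} - \frac{\gamma_k}{2}\nm{v_{k+1}-v_k}^2$, respectively, where $d_{k+1} := \nabla h(x_{k+1}) + q_{k+1} \in \partial f(x_{k+1})$. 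For $I_1$ I would invoke \cref{eq:philowerL} on $h$ and the subgradient inequality for $g$ at $x_{k+1}$ to recover the analogue of \cref{eq:I1}, then expand $x_{k+1}-x_k$ via the first line of \cref{eq:ex-HODE}. This produces the usual three pieces: the flow inner product $\alpha_k\dual{d_{k+1},\mathcal G^x(\bs x_{k+1},d_{k+1})}$, the Gauss-Seidel lag $\alpha_k\dual{d_{k+1},v_k-v_{k+1}}$, and the cross term $\alpha_k\beta_k\dual{d_{k+1},\nabla h(x_{k+1})-\nabla h(x_k)}$. Young's inequality absorbs the lag against the negative $\frac{\gamma_k}{2}\nm{v_{k+1}-v_k}^2$ from $I_2$ at the cost $\frac{\alpha_k^2}{2\gamma_k}\nm{d_{k+1}}_*^2$, and the cross term is bounded (this is the sole departure from the previous proof) by
\[
\frac{L\alpha_k^2\beta_k^2}{2}\nm{d_{k+1}}_*^2 + \frac{1}{2L}\nm{\nabla h(x_{k+1})-\nabla h(x_k)}_*^2,
\]
whose second summand is cancelled exactly by the $-\frac{1}{2L}\nm{\nabla h(x_{k+1})-\nabla h(x_k)}_*^2$ furnished by \cref{eq:philowerL} in the upper bound on $I_1$.

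Assembling everything and applying the strong Lyapunov property \cref{eq:A-non-Lt-HNAG} at $\bs x_{k+1}$, which contributes $-\alpha_k\mathcal L_{k+1} - \alpha_k\beta_k\nm{d_{k+1}}_*^2 - \frac{\mu\alpha_k}{2}\nm{x_{k+1}-v_{k+1}}^2$, the net coefficient in front of $\nm{d_{k+1}}_*^2$ is precisely $\frac{L\alpha_k^2\beta_k^2}{2} + \frac{\alpha_k^2}{2\gamma_k} - \alpha_k\beta_k$, which by the identity \cref{eq:key-id} equals $-\frac{1}{4L}$. This yields the contraction \cref{eq:diff-Lk-split-rem-2}. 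Invoking \cref{thm:ode-ineq-q>1-dis}(2) on $A_k = \mathcal L_k$ with the step sizes $\{\alpha_k\}$ and $p_k^2 = \nm{d_{k+1}}_*^2/(4L)$ then delivers the summable estimate \cref{eq:conv-split-rem-2}.

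Finally, for the rate \cref{eq:conv-algo2-2} I would reduce to the already-proved $\rho_k$ bound by rescaling: the relation $L\alpha_k^2 = \gamma_k/4$ from \cref{eq:ak-bk} is exactly the $B=0$ case of \cref{eq:gk-ak} with $L$ replaced by $\widetilde L := 4L$, so applying \cref{eq:decayrho-B0} with $\widetilde r = \gamma_0/\widetilde L = r/4$ gives
\[
\rho_k \leqslant \left(\frac{\sqrt{\widetilde r+1}+1}{\sqrt{\widetilde r+1}+1+\sqrt{\widetilde r}\,k}\right)^{2} = \left(\frac{2+\sqrt{r+4}}{2+\sqrt{r+4}+\sqrt{r}\,k}\right)^{2}
\]
after simplification, and the lower bound $\alpha_k^2 \geqslant \mu/(4L)$ (valid when $\mu>0$) produces the linear factor $(1+\sqrt{\mu/(4L)})^{-k}$. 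The main obstacle is essentially the careful coefficient bookkeeping: ensuring that no $\nm{\nabla h(x_{k+1})-\nabla h(x_k)}_*^2$ residual survives the cancellation and that the collapsed coefficient of $\nm{d_{k+1}}_*^2$ lands exactly on $-1/(4L)$ as dictated by \cref{eq:key-id}.
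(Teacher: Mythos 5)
Your proof proposal is correct and is the ``simple modification'' the paper alludes to (the paper itself omits the proof with exactly that remark). You have correctly identified that the combination-of-squares step used in \cref{thm:conv-split} must be replaced by a one-sided Young inequality
\[
\alpha_k\beta_k\dual{d_{k+1},\nabla h(x_{k+1})-\nabla h(x_k)}\leqslant \frac{L\alpha_k^2\beta_k^2}{2}\nm{d_{k+1}}_*^2+\frac{1}{2L}\nm{\nabla h(x_{k+1})-\nabla h(x_k)}_*^2,
\]
whose second summand is exactly absorbed by the $-\tfrac{1}{2L}\nm{\nabla h(x_{k+1})-\nabla h(x_k)}_*^2$ coming from \cref{eq:philowerL} in $I_1$; the surviving $\nm{d_{k+1}}_*^2$ coefficient is then precisely the left-hand side of \cref{eq:key-id}, which is what the paper records that identity for. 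The rescaling $L\mapsto 4L$, $r\mapsto r/4$ to import \cref{eq:decayrho-B0} also checks out: $\sqrt{r/4+1}+1=(\sqrt{r+4}+2)/2$ and $\sqrt{r/4}=\sqrt{r}/2$ recover the displayed rate, and $\gamma_k\geqslant\mu$ (since $\gamma_0\geqslant\mu$) gives $\alpha_k^2=\gamma_k/(4L)\geqslant\mu/(4L)$, hence the linear factor. One small looseness: \cref{thm:ode-ineq-q>1-dis}(2) as stated only yields $A_k\leqslant\rho_kA_0$ and $\sum_i p_i^2/\rho_i\leqslant CA_0$ with an unspecified $C$, not the sharper partial-sum form \cref{eq:conv-split-rem-2}; you should instead note that dividing \cref{eq:diff-Lk-split-rem-2} by $\rho_{k+1}$ and telescoping gives $\mathcal L_k/\rho_k+\tfrac{1}{4L}\sum_{i=0}^{k-1}\nm{d_{i+1}}_*^2/\rho_i\leqslant\mathcal L_0$, which after multiplying by $\rho_k$ is exactly \cref{eq:conv-split-rem-2}. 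This is the same argument that proves the cited item, so it is an attribution nit rather than a gap.
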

The bound of the sub-gradient yields that 
	\[
	\min_{0\leqslant i\leqslant k}
	\nm{d_{i+1}}_*^2\leqslant \frac{4L\mathcal L_0}{\sum_{i=0}^{k}
		1/\rho_i },
	\]
	and asymptotically, we have a faster decay rate of gradient $\nm{d_{i+1}}_*^2 = o(2L\mathcal L_0\rho_k)$.

\subsection{Another accelerated proximal gradient method}
We now apply the operator splitting to the first equation of \cref{eq:Hagf-intro-nonsmooth} by considering $x'=v-x$ first and $x'\in - \beta (\nabla h(x) + \partial g(x))$ next. The later can be further split via the proximal-gradient method. That is
\begin{equation*}
	\left\{
	\begin{aligned}
		{}&		\frac{y_k-x_{k}}{\alpha_k}= v_{k}-y_k,\\
		{}&		x_{k+1} = \proxi_{\alpha_k\beta_k g}(y_{k}-\alpha_k\beta_k\nabla h(y_k)).
	\end{aligned}
	\right. 
\end{equation*}
Letting 
\begin{equation}\label{eq:d-yk}
	d_f(y_k):=\frac{y_k - x_{k+1}}{\alpha_k\beta_k}\in\partial g(x_{k+1})+\nabla h(y_k),
\end{equation}
 we have the following `middle' point discretization of HNAG flow \cref{eq:Hagf-intro-nonsmooth}:
\begin{equation}\label{eq:ex1-ode-OAG}
	\left\{
	\begin{aligned}
		\frac{x_{k+1} - x_{k}}{\alpha_k} ={}&v_{k}-y_k - \beta_k d_f(y_k),\\
		\frac{v_{k+1}-v_{k}}{\alpha_k}={}&
		\frac{\mu}{\gamma_k}(y_k-v_{k+1})
		-\frac{1}{\gamma_k}
		d_f(y_{k}),\\
		\frac{		\gamma_{k+1}  - \gamma_{k}}{\alpha_k}  
		={}&\mu-\gamma_{k+1}.
	\end{aligned}
	\right.
\end{equation}
The point $y_k$ is an intermediate point of $x_k$ and $x_{k+1}$, and in the vector field $\mathcal G(x,v,\gamma)$ the first variable will be evaluated at $y_k$. We note that $d_f(y_k)$ is nothing but the gradient mapping used in \cite{luo_chen_differential_2019}; see also $d_{k+1/2}$ defined in Lemma \ref{lm:pgdecay}.

We use the step size
\begin{equation}\label{eq:apgpara}
	L\alpha_k^2=\gamma_k(1+\alpha_k),\quad \alpha_k\beta_k = 1/L,
\end{equation}
and summarize \cref{eq:ex1-ode-OAG} in \cref{algo:APG}. Note that $\alpha_k$ is slightly larger than that  in \eqref{eq:split-ab}.

\begin{algorithm}[H]
		\caption{New APG method for minimizing $f=h+g,\,h\in\mathcal S_{\mu,L}^1$ with $0\leqslant \mu\leqslant L<\infty$}
	\label{algo:APG}
	\begin{algorithmic}[1] 
		\REQUIRE  $\gamma_0 > 0,\,x_0,\,v_0 \in V$ and $s = 1/L$.
		\FOR{$k=0,1,\ldots$}
		\smallskip
		\STATE Compute $\alpha_k = \left (\gamma_k+ \sqrt{\gamma_k^2+4L\gamma_k}\right )/(2L)$.
		\smallskip				
		\STATE Compute $\displaystyle{y_k =\frac{x_k+\alpha_kv_k}{1+\alpha_k}}$.
		\smallskip
		\STATE Update $\displaystyle{x_{k+1} =\proxi_{s g}(y_k-s\nabla h(y_k))}$.
		\smallskip		
		\STATE Update $\displaystyle{v_{k+1} = \frac{\gamma_kv_k+\mu\alpha_ky_k}{\gamma_k+\mu\alpha_k} +\frac{\gamma_k (1+\alpha_k)}{\gamma_k+\mu\alpha_k} \frac{x_{k+1}-y_k}{\alpha_k}}$.		
		\smallskip
		\STATE Update $\displaystyle{\gamma_{k+1} = \frac{\gamma_k+\mu\alpha_k}{1+\alpha_k}}$.			
		\ENDFOR
	\end{algorithmic}
\end{algorithm}

We will establish the convergence analysis via the strong Lyapunov condition. A key tool is the following estimate at $y$, which allows us to modify \cref{eq:A-non-Lt-HNAG} for later use.
\begin{lem}
	Assume $f =h+g$ and $h\in\mathcal S_{\mu,L}^1$ with $0\leqslant \mu\leqslant L<\infty$.  Let $x = \proxi_{sg}(y - s \nabla h(y))$ with $s=1/L$ and $d_f(y) = (y - x)/s$. We have the following inequality
	\begin{equation}\label{eq:pgconvex}
		\dual{d_f(y), y - x^*} \geqslant f(x) - f(x^*) + \frac{\mu}{2}\| y - x^*\|^2 + \frac{1}{2L}\| d_f(y) \|_*^2.
	\end{equation}
\end{lem}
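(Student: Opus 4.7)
The plan is to unpack the proximal step into a subgradient identity, decompose $\langle d_f(y), y-x^*\rangle$ at the intermediate point $y$, and then apply convexity of $g$ together with strong convexity and smoothness of $h$ on the two sides.

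First I would use the optimality condition for $x=\proxi_{sg}(y-s\nabla h(y))$ to write
\[
q := d_f(y) - \nabla h(y) \in \partial g(x),
\]
so that $d_f(y) = \nabla h(y) + q$. From the very definition $y - x = s\, d_f(y)$ with $s = 1/L$, one has immediately $\langle d_f(y), y-x \rangle = \tfrac{1}{L}\nm{d_f(y)}_*^2$ and $\tfrac{L}{2}\nm{x-y}^2 = \tfrac{1}{2L}\nm{d_f(y)}_*^2$; these two quantities will be the source of the $\tfrac{1}{2L}\nm{d_f(y)}_*^2$ on the right-hand side.

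Next I would split
\[
\dual{d_f(y), y - x^*} = \dual{d_f(y), y-x} + \dual{\nabla h(y), x - x^*} + \dual{q, x - x^*}.
\]
For the $g$-piece, convexity of $g$ and $q \in \partial g(x)$ give $\dual{q, x - x^*} \geq g(x) - g(x^*)$. For the $h$-piece I would further split $x - x^* = (x-y)+(y-x^*)$ and apply the two defining inequalities of $h \in \mathcal S_{\mu,L}^1$: $\mu$-strong convexity yields
\[
\dual{\nabla h(y), y - x^*} \geqslant h(y) - h(x^*) + \tfrac{\mu}{2}\nm{y - x^*}^2,
\]
while the $L$-smooth descent inequality (\cref{eq:DL}) rearranges to
\[
\dual{\nabla h(y), x - y} \geqslant h(x) - h(y) - \tfrac{L}{2}\nm{x-y}^2.
\]
Adding these two gives $\dual{\nabla h(y), x-x^*} \geqslant h(x) - h(x^*) + \tfrac{\mu}{2}\nm{y-x^*}^2 - \tfrac{1}{2L}\nm{d_f(y)}_*^2$.

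Finally, collecting the three pieces, $\dual{d_f(y), y-x^*}$ is bounded below by
\[
\tfrac{1}{L}\nm{d_f(y)}_*^2 + (h+g)(x) - (h+g)(x^*) + \tfrac{\mu}{2}\nm{y-x^*}^2 - \tfrac{1}{2L}\nm{d_f(y)}_*^2,
\]
which is exactly the claimed \cref{eq:pgconvex} after combining the two $\nm{d_f(y)}_*^2$ terms. The only delicate point is choosing the right split of $x-x^*$ so that the $-\tfrac{L}{2}\nm{x-y}^2$ loss from the smoothness inequality is paid for by (half of) the gain $\tfrac{1}{L}\nm{d_f(y)}_*^2$ coming from $\dual{d_f(y), y-x}$; no other step is delicate.
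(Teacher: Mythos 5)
Your proof is correct and follows essentially the same route as the paper's. You derive $q = d_f(y) - \nabla h(y) \in \partial g(x)$ from the prox optimality condition, bound the $g$-part by convexity, bound the $h$-part using strong convexity at $(y, x^*)$ together with $L$-smoothness at $(x, y)$ (which is exactly Lemma \ref{lm:3pts} unfolded), and then pay for the $-\tfrac{L}{2}\nm{x-y}^2$ loss with the gain $\langle d_f(y), y - x\rangle = \tfrac{1}{L}\nm{d_f(y)}_*^2$. The paper arranges the same ingredients slightly differently — it first assembles the one-sided bound $f(x) - f(x^*) \leqslant \langle d_f(y), x - x^*\rangle - \tfrac{\mu}{2}\nm{y - x^*}^2 + \tfrac{L}{2}\nm{y-x}^2$ and then splits $\langle d_f(y), x - x^*\rangle$, whereas you decompose $\langle d_f(y), y - x^*\rangle$ from the outset — but the underlying computation is identical.
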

\begin{proof}
	Let $q(x) = d_f(y) - \nabla h(y)$. Then by definition $q(x)\in \partial g(x)$ and thus 
	\begin{equation*}
		g(x) - g(x^*) \leqslant \dual{q(x), x - x^*}.
	\end{equation*}
	For $h\in\mathcal S_{\mu,L}^1$, we use Lemma \ref{lm:3pts} to conclude
	\begin{equation*}
		h(x) - h(x^*) \leqslant \dual{\nabla h(y), x - x^*} - \frac{\mu}{2}\| y - x^*\|^2 + \frac{L}{2}\| y- x\|^2
	\end{equation*}
Adding the above two estimates together yields that
	$$
	f(x) - f(x^*) \leqslant \dual{ d_f(y), x - x^*}- \frac{\mu}{2}\| y - x^*\|^2 + \frac{L}{2}\| y- x\|^2.
	$$
	Now split $\dual{ d_f(y), x - x^*} = \dual{ d_f(y), y - x^*} + \dual{ d_f(y), x - y}$ and use the fact $ x - y = - d_f(y)/L$ to get the desired estimate \eqref{eq:pgconvex}. 
\end{proof}
%
\begin{thm}\label{thm:conv-APG}
	For \cref{algo:APG}, we have
	\begin{equation}\label{eq:conv-APG}
		\mathcal L_{k+1}
		\leqslant 
		\frac{	 \mathcal L_k}{1+\alpha_k }\quad\forall\,k\in\mathbb N,
	\end{equation}
	and this implies, when $\gamma_0 = r L\geqslant \mu$,
	\begin{equation}\label{eq:conv-algo2-}
		\mathcal L_k
		\leqslant\mathcal L_0\times \min\left\{
			\left (\frac{2}{2 + \sqrt{r} \, k} \right )^2,\,
		\left(1+\sqrt{\frac{\mu}{L}}\right)^{-k}
		\right\}.
	\end{equation}
\end{thm}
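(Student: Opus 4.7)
The plan is to establish the one-step contraction $(1+\alpha_k)\mathcal L_{k+1} \leqslant \mathcal L_k$ directly and then read off the global estimate \eqref{eq:conv-algo2-} from the general decay bound for the sequence $\rho_k$. The strategy closely parallels the proof of \cref{thm:conv-split}, the twist being that the scheme \eqref{eq:ex1-ode-OAG} evaluates the gradient mapping at the intermediate point $y_k$ rather than at $x_{k+1}$, so \cref{lm:3pts} and the new key inequality \eqref{eq:pgconvex} will take over the role played by the pointwise strong Lyapunov condition in the earlier proofs. Using $(1+\alpha_k)\gamma_{k+1} = \gamma_k + \mu\alpha_k =: \lambda$, the target inequality takes the form
\[
[f(x_{k+1}) - f(x_k)] + \alpha_k[f(x_{k+1}) - f(x^*)] + \frac{\lambda}{2}\nm{v_{k+1} - x^*}^2 - \frac{\gamma_k}{2}\nm{v_k - x^*}^2 \leqslant 0.
\]

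For the two function-value contributions, I would apply \cref{lm:3pts} to $h$ at the three points $x_k, y_k, x_{k+1}$ together with convexity of $g$ at $x_{k+1}$ through $q := d_f(y_k) - \nabla h(y_k) \in \partial g(x_{k+1})$, and simplify using $x_{k+1} - y_k = -d_f(y_k)/L$ and $y_k - x_k = \alpha_k(v_k - y_k)$ to obtain
\[
f(x_{k+1}) - f(x_k) \leqslant \alpha_k\dual{d_f(y_k), v_k - y_k} - \frac{1}{2L}\nm{d_f(y_k)}_*^2 - \frac{\mu}{2}\nm{y_k - x_k}^2.
\]
The $\alpha_k[f(x_{k+1}) - f(x^*)]$ contribution would then be bounded via \eqref{eq:pgconvex} at $y_k$, producing $\alpha_k\dual{d_f(y_k), y_k - x^*}$ along with the expected negative terms in $\nm{y_k - x^*}^2$ and $\nm{d_f(y_k)}_*^2$.

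The quadratic piece is the crux. I would exploit the fact that the $v$-update in \eqref{eq:ex1-ode-OAG} is exactly the first-order optimality condition for the $\lambda$-strongly convex problem
\[
v_{k+1} = \mathop{\argmin}\limits_{v}\,\left\{\frac{\gamma_k}{2}\nm{v - v_k}^2 + \frac{\mu\alpha_k}{2}\nm{v - y_k}^2 + \alpha_k\dual{d_f(y_k), v}\right\}.
\]
The strong-convexity inequality comparing the value at $x^*$ with that at the minimizer $v_{k+1}$ then yields an upper bound on $\tfrac{\lambda}{2}\nm{v_{k+1} - x^*}^2 - \tfrac{\gamma_k}{2}\nm{v_k - x^*}^2 - \tfrac{\mu\alpha_k}{2}\nm{y_k - x^*}^2$ that features $\alpha_k\dual{d_f(y_k), x^* - v_{k+1}}$ together with the ``free'' negative terms $-\tfrac{\gamma_k}{2}\nm{v_{k+1} - v_k}^2$ and $-\tfrac{\mu\alpha_k}{2}\nm{v_{k+1} - y_k}^2$.

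Summing the three bounds, the three $d_f(y_k)$-linear contributions telescope into the single cross term $\alpha_k\dual{d_f(y_k), v_k - v_{k+1}}$, which I would absorb by Young's inequality against the free $-\tfrac{\gamma_k}{2}\nm{v_{k+1} - v_k}^2$, leaving a coefficient $\tfrac{\alpha_k^2}{2\gamma_k} - \tfrac{1+\alpha_k}{2L}$ on $\nm{d_f(y_k)}_*^2$. This vanishes identically by the parameter rule $L\alpha_k^2 = \gamma_k(1+\alpha_k)$ of \eqref{eq:apgpara}, which is precisely the algebraic miracle the scheme was engineered to realise; the surviving terms $-\tfrac{\mu}{2}\nm{y_k - x_k}^2 - \tfrac{\mu\alpha_k}{2}\nm{v_{k+1} - y_k}^2$ are manifestly nonpositive. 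Iterating \eqref{eq:conv-APG} yields $\mathcal L_k \leqslant \rho_k\mathcal L_0$, and since \eqref{eq:apgpara} is the case $B = 1 \geqslant 1/2$ of \eqref{eq:gk-ak}, the bound \eqref{eq:decayrho-B>1/2} with $r = \gamma_0/L$ delivers \eqref{eq:conv-algo2-}. The main difficulty I expect lies in organising the bookkeeping so that after telescoping the three linear terms Young's inequality leaves exactly the coefficient annihilated by \eqref{eq:apgpara}; once the variational characterisation of $v_{k+1}$ is in hand, the rest is essentially mechanical.
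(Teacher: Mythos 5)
Your proof is correct and rests on exactly the same ingredients as the paper's proof of \cref{thm:conv-APG}: the refined convexity bound \eqref{eq:pgconvex}, \cref{lm:3pts} applied to $h$ at $(x_k, y_k, x_{k+1})$ together with convexity of $g$, the Young/Cauchy--Schwarz estimate on the cross term $\alpha_k\dual{d_f(y_k), v_k-v_{k+1}}$, the parameter rule $L\alpha_k^2=\gamma_k(1+\alpha_k)$ from \eqref{eq:apgpara}, and the decay estimate \eqref{eq:decayrho-B>1/2} with $B=1$. The only difference is presentational: the paper routes the computation through the abstract strong Lyapunov inequality \eqref{eq:strongLy} and the coordinate-path decomposition $I_1+I_2+I_3$, whereas you work directly with the target inequality and invoke the variational (three-point) characterisation of the minimiser $v_{k+1}$ to produce the quadratic-term bound and the free negative terms $-\frac{\gamma_k}{2}\nm{v_{k+1}-v_k}^2$, $-\frac{\mu\alpha_k}{2}\nm{v_{k+1}-y_k}^2$ in a single step; both bookkeepings lead to the identical cancellation.
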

\begin{proof}
Using the refined convexity lower bound \eqref{eq:pgconvex}, the strong Lyapunov property at $y_k$ reads as
	\begin{equation}\label{eq:strongLy}
		\begin{split}
			&-\partial \mathcal{L}\left(d_{f}\left(y_{k}\right), v_{k+1}, \gamma_{k+1}\right) \cdot \mathcal G\left(d_{f}\left(y_{k}\right), v_{k+1}, \gamma_{k+1}\right) \\
			\geqslant {}&\mathcal{L}\left(x_{k+1}, v_{k+1}, \gamma_{k+1}\right)+\left(\frac{1}{2 L}+\beta_k\right)\left\|d_{f}\left(y_{k}\right)\right\|_*^{2},
		\end{split}
	\end{equation}
	which can proved analogously to \cref{eq:A-non-Lt-HNAG}.

	The estimate of 
	$	\mathcal L_{k+1} - \mathcal L_k		=I_1 + I_2 + I_3$ is almost in line with that of \cref{thm:conv-split}, where $I_1,\,I_2$ and $I_3$ are defined in \cref{eq:I1-AVD}. The difference comes from the first item $I_1$.
%
	Recall \cref{eq:d-yk} and let $q(x_{k+1}) := d_f(y_k) - \nabla h(y_k) \in\partial g(x_{k+1})$. By convexity of $g$, it follows that
	\begin{equation*}
		g(x_{k+1}) - g(x_k) \leqslant \dual{q(x_{k+1}), x_{k+1} - x_k},
	\end{equation*}
and we use Lemma \ref{lm:3pts} to conclude
	\begin{equation*}
		h(x_{k+1}) - h(x_k) \leqslant \dual{\nabla h(y_k), x_{k+1} - x_k} + \frac{L}{2}\| x_{k+1}- y_k\|^2.
	\end{equation*} 
By \cref{eq:d-yk}, we see 
\begin{equation*}
	x_{k+1} - y_k = - \frac{1}{L}d_f(y_k),
\end{equation*}
and a routine calculation yields the bound
	\begin{equation*}
		\begin{split}
			\mathcal L_{k+1} - \mathcal L_k \leqslant {}& \alpha_k \partial \mathcal{L}\left(d_{f}\left(y_{k}\right), v_{k+1}, \gamma_{k+1}\right) \cdot \mathcal G\left(d_{f}\left(y_{k}\right), v_{k+1}, \gamma_{k+1}\right)
			+ \left (\frac{1}{2L} + \frac{\alpha_k^2}{2\gamma_k}\right )\|d_f(y_k)\|_*^2.
		\end{split}
	\end{equation*}
	Applying the strong Lyapunov property \eqref{eq:strongLy} and the relation \cref{eq:apgpara}, we get
	$$
	\mathcal L_{k+1} - \mathcal L_k \leqslant {} - \alpha_k \mathcal L_{k+1} + \left (\frac{\alpha_k^2}{2\gamma_k} - \frac{\alpha_{k}}{2 L}-\frac{1}{2L} \right )\|d_f(y_k)\|_*^2= - \alpha_k \mathcal L_{k+1}.
	$$
	This proves \cref{eq:conv-APG}.
		To the end, recalling \cref{eq:decayrho-B>1/2} and \eqref{eq:apgpara} proves \cref{eq:conv-algo2-}.
\end{proof}
\section{Concluding Remarks}
\label{sec:conclude}
By using the tool of Lyapunov function and introducing the concept of strong Lyapunov condition, we present a unified self-contained framework for first-order optimization methods including gradient descent method, proximal point algorithm, proximal gradient method, heavy ball (momentum) method, and Nesterov accelerated gradient method.

However, we notice that a systematical way to find an appropriate Lyapunov function satisfying the strong Lyapunov condition is not presented in this work. When $\nabla f$ is linear, it is possible to use control theory to design one; see~\citet{lessard_analysis_2016} for more details. On the other hand, we have not considered non-Euclidean setting that involves the Bregman divergence (or preconditioning effect), which is related to the mirror descent models \citep{wibisono_variational_2016,krichene_accelerated_2015}.

Different from existing works using Lyapunov analysis and involving complicated algebraic calculations, the strong Lyapunov condition can be verified much more systematically by inequalities of convex functions. Besides, by suitable time scaling factor, we can handle the convex case and strong convex case in a unified way. Furthermore, the strong Lyapunov condition can be easily used in the discretization to establish the convergence of the algorithms. This together with continuous dynamical system renders effective tools for designing and analysis of convex optimization algorithms.

\vskip 0.2in

\end{document}